\newcommand{\im}{{\operatorname{im}}}
\newcommand{\cok}{{\operatorname{cok}}}
\newcommand{\Cl}{{\operatorname{Cl}}}
\newcommand{\Gal}{\operatorname{Gal}}
\newcommand{\Hom}{\operatorname{Hom}}
\newcommand{\Aut}{\operatorname{Aut}}
\newcommand{\GL}{\operatorname{GL}}
\newcommand{\Frob}{\operatorname{Frob}}
\newcommand{\rank}{\operatorname{rank}}
\newcommand{\tr}{\operatorname{Tr}}
\newcommand{\diag}{\operatorname{diag}}
\newcommand{\Sur}{\operatorname{Sur}}
\newcommand{\Sym}{\operatorname{Sym}}
\newcommand{\M}{\operatorname{M}}
\newcommand{\HH}{\operatorname{H}}
\newcommand{\Z}{\mathbb{Z}}
\newcommand{\Q}{\mathbb{Q}}
\newcommand{\C}{\mathbb{C}}
\newcommand{\F}{\mathbb{F}}
\newcommand{\PP}{\mathbb{P}}
\newcommand{\EE}{\mathbb{E}}
\newcommand{\OO}{\mathcal{O}}
\newcommand{\YY}{\mathcal{Y}}
\newcommand{\ZZ}{\mathcal{Z}}
\theoremstyle{definition}
\newtheorem{theorem}{Theorem}[section]
\newtheorem{proposition}[theorem]{Proposition}
\newtheorem{corollary}[theorem]{Corollary}
\newtheorem{lemma}[theorem]{Lemma}
\newtheorem*{conjecture*}{Conjecture}
\newtheorem{remark}[theorem]{Remark}
\newtheorem{definition}[theorem]{Definition}
\newcommand\semilarge{\@setfontsize\semilarge{11}{13.2}}
\title{\semilarge{\textbf{UNIVERSALITY OF THE COKERNELS OF RANDOM $p$-ADIC HERMITIAN MATRICES}}}
\author{\normalsize{JUNGIN LEE} }
\date{}
\renewcommand{\@seccntformat}[1]{\csname the#1\endcsname.\quad}
\renewenvironment{abstract}
 {\quotation\small\noindent\rule{\linewidth}{.5pt}\par\smallskip
  {\centering\bfseries\abstractname\par}\medskip}
 {\par\noindent\rule{\linewidth}{.5pt}\endquotation}
\begin{document}
\maketitle
\vspace{-18mm}

\begin{abstract}
In this paper, we study the distribution of the cokernel of a general random Hermitian matrix over the ring of integers $\mathcal{O}$ of a quadratic extension $K$ of $\mathbb{Q}_p$. For each positive integer $n$, let $X_n$ be a random $n \times n$ Hermitian matrix over $\mathcal{O}$ whose upper triangular entries are independent and their reductions are not too concentrated on certain values.
We show that the distribution of the cokernel of $X_n$ always converges to the same distribution which does not depend on the choices of $X_n$ as $n \rightarrow \infty$ and provide an explicit formula for the limiting distribution. 
This answers Open Problem 3.16 from the ICM 2022 lecture note of Wood in the case of the ring of integers of a quadratic extension of $\mathbb{Q}_p$. 
\end{abstract}

\section{Introduction} \label{Sec1}

\subsection{Distribution of the cokernel of a random $p$-adic matrix} \label{Sub11}

Let $p$ be a prime. The Cohen-Lenstra heuristics \cite{CL84} predict the distribution of the $p$-Sylow subgroup of the class group $\Cl(K)$ of a random imaginary quadratic field $K$ ordered by the absolute value of the discriminant. (When $p=2$, one needs to modify the conjecture by replacing $\Cl(K)$ with $2 \Cl(K)$.) Friedman and Washington \cite{FW89} computed the limiting distribution of the cokernel of a Haar random $n \times n$ matrix over $\Z_p$ as $n \rightarrow \infty$. They proved that for every finite abelian $p$-group $G$ and a Haar random matrix $A_n \in \M_n(\Z_p)$ for each positive integer $n$, 
\begin{equation*}
\lim_{n \rightarrow \infty} \PP (\cok (A_n) \cong G) = \frac{1}{\left | \Aut(G) \right |} \prod_{i=1}^{\infty}(1-p^{-i})
\end{equation*}
where $\M_n(R)$ denotes the set of $n \times n$ matrices over a commutative ring $R$. The right-hand side of the above formula is equal to the conjectural distribution of the $p$-parts of the class groups of imaginary quadratic fields predicted by Cohen and Lenstra \cite{CL84}.

There are two possible ways to generalize the work of Friedman and Washington. One way is to consider the distribution of the cokernels of various types of random matrices over $\Z_p$. Bhargava, Kane, Lenstra, Poonen and Rains \cite{BKLPR15} computed the distribution of the cokernel of a random alternating matrix over $\Z_p$. 
They suggested a model for the $p$-Sylow subgroup of the Tate-Shafarevich group of a random elliptic curve over $\Q$ of given rank $r \geq 0$, in terms of a random alternating matrix over $\Z_p$. They also proved that the distribution of their random matrix model coincides with the prediction of Delaunay \cite{Del01, Del07, DJ14} on the distribution of the $p$-Sylow subgroup of the Tate-Shafarevich group of a random elliptic curve over $\Q$.
Clancy, Kaplan, Leake, Payne and Wood \cite{CKLPW15} computed the distribution of the cokernel of a random symmetric matrix over $\Z_p$.

In the above results, random matrices are assumed to be equidistributed with respect to Haar measure. The distributions of the cokernels for much larger classes of random matrices were established by Wood \cite{Woo17}, Wood \cite{Woo19} and Nguyen-Wood \cite{NW22}. 

\begin{definition} \label{def1a}
Let $0 < \varepsilon < 1$ be a real number. 
A random variable $x$ in $\Z_p$ is $\varepsilon$\textit{-balanced} if $\PP (x \equiv r \,\, (\text{mod } p)) \leq 1 - \varepsilon$ for every $r \in \Z / p\Z$. 
A random matrix $A$ in $\M_n(\Z_p)$ is $\varepsilon$\textit{-balanced} if its entries are independent and $\varepsilon$-balanced.
A random symmetric matrix $A$ in $\M_n(\Z_p)$ is $\varepsilon$\textit{-balanced} if its upper triangular entries are independent and $\varepsilon$-balanced. 
\end{definition}

\begin{theorem} \label{thm1b}
Let $G$ be a finite abelian $p$-group.
\begin{enumerate}
    \item (\cite[Theorem 1.2]{Woo19}, \cite[Theorem 4.1]{NW22}) Let $(\alpha_n)_{n \geq 1}$ be a sequence of positive real numbers such that $0 < \alpha_n < 1$ for each $n$ and for any constant $\Delta > 0$, we have $\alpha_n \geq \frac{\Delta \log n}{n}$ for sufficiently large $n$. Let $A_n$ be an $\alpha_n$-balanced random matrix in $\M_n(\Z_p)$ for each $n$. Then,
    $$
    \lim_{n \rightarrow \infty} \PP(\cok(A_n) \cong G) = \frac{1}{\left | \Aut(G) \right |} \prod_{i=1}^{\infty}(1-p^{-i}).
    $$
    
    \item (\cite[Theorem 1.3]{Woo17}) Let $0 < \varepsilon < 1$ be a real number and $B_n$ be an $\varepsilon$-balanced random symmetric matrix in $\M_n(\Z_p)$ for each $n$. Then,
    $$
    \lim_{n \rightarrow \infty} \PP(\cok(B_n) \cong G) = \frac{\# \left\{ \text{symmetric, bilinear, perfect } \phi : G \times G \rightarrow \C^* \right\}}{\left | G \right | \left | \Aut(G) \right |} \prod_{i=1}^{\infty}(1-p^{1-2i}).
    $$
\end{enumerate}
\end{theorem}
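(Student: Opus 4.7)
The plan is to apply the method of moments, the standard tool for universality results for cokernels of random $p$-adic matrices. The distribution of a random finite abelian $p$-group is determined by its surjection moments $\EE[\#\Sur(\cdot, G)]$ indexed by finite abelian $p$-groups $G$, provided these moments do not grow too fast; an inversion formula then recovers the distribution in closed form. Thus the task reduces to computing $\lim_{n \to \infty} \EE[\#\Sur(\cok(A_n), G)]$ for each $G$ in part (1), and the analogous moment for $B_n$ in part (2), and checking that these match the moments of the candidate limit distributions.

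For part (1), a surjection $\cok(A_n) \twoheadrightarrow G$ is the same data as a surjection $F : \Z_p^n \twoheadrightarrow G$ with $F A_n = 0$, i.e., every column of $A_n$ lying in $\ker F$. Independence of the columns $v_1, \dots, v_n$ of $A_n$ gives
\begin{equation*}
\EE[\#\Sur(\cok(A_n), G)] \;=\; \sum_{F \,:\, \Z_p^n \twoheadrightarrow G} \prod_{j=1}^{n} \PP(F v_j = 0).
\end{equation*}
I would expand $\PP(F v_j = 0) = |G|^{-1} \sum_{\chi \in \widehat{G}} \EE[\chi(F v_j)]$, and use the $\alpha_n$-balanced hypothesis to bound $|\EE[\chi(F_i x)]|$ strictly below $1$ whenever $\chi \circ F_i$ is non-trivial, where $F_i$ denotes the $i$-th component of $F$. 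Partitioning surjections $F$ by the combinatorial depth of the induced characters $\{\chi \circ F_i\}_{i,\chi}$, one expects the generic surjections to contribute asymptotically $|G|^n \cdot |G|^{-n} = 1$, while the structured ones must be shown to contribute $o(1)$.

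The main obstacle is exactly this last step: controlling the contribution of surjections $F$ whose induced characters are concentrated in a proper subgroup of $\widehat{G}$. This is handled via Littlewood--Offord--Halasz-type anti-concentration inequalities adapted to $\alpha_n$-balanced variables, forming the technical heart of \cite{Woo19} and its sharpening in \cite{NW22}; the hypothesis $\alpha_n \geq \Delta (\log n)/n$ is precisely what guarantees that the per-$F$ decay of $\prod_j \PP(F v_j = 0)$ outpaces the combinatorial count of structured $F$. Once the moment is shown to be $1$ for every $G$, moment inversion produces the Cohen--Lenstra formula $|\Aut(G)|^{-1}\prod_i (1 - p^{-i})$. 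For part (2), the same program is executed in \cite{Woo17} for symmetric matrices: the Fourier calculation now couples rows and columns through the symmetry of $B_n$, the limiting moment matches the count of symmetric bilinear perfect pairings on $G$, and the structured surjections are dealt with by a symmetric analogue of the anti-concentration estimate. Moment inversion then produces the stated closed form involving $\prod_i(1 - p^{1-2i})$.
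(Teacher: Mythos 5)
This theorem is not proved in the paper at all; it is quoted from \cite{Woo19}, \cite{NW22} and \cite{Woo17}, and your outline is precisely the strategy of those references (and the one this paper adapts in Sections \ref{Sec2}--\ref{Sec5} for the Hermitian case): express $\EE(\#\Sur(\cok,G))$ as a sum over $F\in\Sur(\Z_p^n,G)$ of $\PP(FA_n=0)$, expand in characters, use the balancedness hypothesis plus robust/weak/code-distance dichotomies to show non-generic $F$ and non-special characters contribute exponentially little, and finish by moment determinacy. So at the level of approach your proposal is the right one, with the genuinely hard anti-concentration inputs correctly attributed to the cited works rather than reproved.

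One concrete error should be flagged in part (2): the limiting moment is \emph{not} ``the count of symmetric bilinear perfect pairings on $G$.'' For a random symmetric ($\varepsilon$-balanced or Haar) matrix over $\Z_p$ and $G$ of type $\lambda$, one has $\lim_n \EE(\#\Sur(\cok(B_n),G)) = \#\wedge^2 G = p^{\sum_i (i-1)\lambda_i}$ (already visible in the Haar computation of \cite{CKLPW15}, and the analogue of this paper's Theorem \ref{thm2g}, where the unramified Hermitian moment is $p^{\sum_i(2i-1)\lambda_i}$). For example, for $G=\Z/p\Z$ the moment is $1$, whereas the number of perfect symmetric pairings is $p-1$. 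Perfect pairings enter only \emph{after} moment inversion: the limiting probability of obtaining $G$ is a sum of $1/\left|\Aut(G,\phi)\right|$ over isomorphism classes of perfect symmetric pairings $\phi$, which the orbit--stabilizer identity (as in (\ref{eq2x1})) converts into the stated formula with $\#\{\text{perfect }\phi\}/(|G||\Aut(G)|)$. If you matched the inversion against the moment value you quoted, you would not recover the claimed distribution, so this is the one quantitative step of the sketch that needs correcting; the rest is a faithful, if schematic, description of the proofs in the cited papers.
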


Another way is to generalize the cokernel condition. We refer to the introduction of \cite{Lee22} for the recent progress in this direction.  The following theorem provides the joint distribution of the cokernels $\cok(P_j(A_n))$ ($1 \leq j \leq l$), where $P_1(t), \cdots, P_l(t) \in \Z_p[t]$ are monic polynomials with some mild assumptions and $A_n$ is a Haar random matrix in $\M_n(\Z_p)$. It is a modified version of the conjecture by Cheong and Huang \cite[Conjecture 2.3]{CH21}.

\begin{theorem} \label{thm1c}
(\cite[Theorem 2.1]{Lee22}) Let $P_1(t), \cdots, P_l(t) \in \Z_p[t]$ be monic polynomials whose mod $p$ reductions in $\F_p[t]$ are distinct and irreducible, and let $G_j$ be a finite module over $R_j := \Z_p[t]/(P_j(t))$ for each $1 \leq j \leq l$. Also let $A_n$ be a Haar random matrix in $\M_n(\Z_p)$ for each positive integer $n$. Then we have
\begin{equation*}
\lim_{n \rightarrow \infty} \PP \begin{pmatrix}
\cok(P_j(A_n)) \cong G_j \\ 
\text{ for } 1 \leq j \leq l
\end{pmatrix} 
= \prod_{j=1}^{l} \left ( \frac{1}{\left | \Aut_{R_j}(G_j) \right |} \prod_{i=1}^{\infty}(1-p^{-i \deg (P_j)}) \right ).
\end{equation*}
\end{theorem}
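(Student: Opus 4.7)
The plan is to apply the moment method pioneered by Wood. For each tuple $(H_1, \ldots, H_l)$ of finite $R_j$-modules $H_j$, one computes the mixed surjection moment
\begin{equation*}
M_n(H_\bullet) := \EE \left[ \prod_{j=1}^{l} \left| \Sur_{R_j}(\cok(P_j(A_n)), H_j) \right| \right]
\end{equation*}
and shows that $\lim_{n \to \infty} M_n(H_\bullet) = 1$ for every such tuple. The theorem then follows from a moments-determine-distribution principle, since the right-hand side of the stated identity is the unique joint distribution on tuples of finite $R_j$-modules all of whose mixed surjection moments equal $1$ (this being the product of Cohen-Lenstra distributions over the $R_j$).

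To compute $M_n(H_\bullet)$, the key observation is that an $R_j$-module surjection $\cok(P_j(A_n)) \twoheadrightarrow H_j$ corresponds to a surjective $\Z_p$-linear map $f_j : \Z_p^n \twoheadrightarrow H_j$ intertwining the actions, i.e., $f_j \circ A_n = T_j \circ f_j$, where $T_j$ denotes multiplication by $t$ on $H_j$. Packaging the $f_j$'s into $F = (f_j)_j : \Z_p^n \to \prod_j H_j$ and $T = \bigoplus_j T_j$, we obtain
\begin{equation*}
M_n(H_\bullet) = \sum_F \PP_{A_n}(F \circ A_n = T \circ F),
\end{equation*}
where $F$ ranges over tuples whose coordinates are each surjective. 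For the probability to be nonzero, $\im(F)$ must be $T$-stable, hence a $\Z_p[t]$-submodule of $\prod_j H_j$. Since the mod $p$ reductions of $P_1, \ldots, P_l$ are distinct irreducibles, these polynomials are pairwise coprime in $\Z_p[t]$, and the Chinese Remainder Theorem gives $\Z_p[t] / \prod_j P_j(t) \cong \prod_j R_j$. Consequently every $\Z_p[t]$-submodule of $\prod_j H_j$ decomposes as $\prod_j H_j'$ with $H_j' \subseteq H_j$, and combined with coordinate-wise surjectivity of $F$ this forces $\im(F) = \prod_j H_j$, i.e.\ $F$ is itself surjective.

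For a surjective $F$, independence of the columns of $A_n$ under Haar measure gives $\PP_{A_n}(F \circ A_n = T \circ F) = \left(\prod_j |H_j|\right)^{-n}$, while a standard spanning estimate modulo $p$ shows the number of surjective $\Z_p$-linear maps $\Z_p^n \twoheadrightarrow \prod_j H_j$ to be $\left(\prod_j |H_j|\right)^n (1 + o(1))$ as $n \to \infty$. Multiplying and summing yields $M_n(H_\bullet) \to 1$. I expect the main technical obstacle to be the concluding moments-determine-distribution step: since the random tuples take values in the countably infinite set of isomorphism classes of finite $\prod_j R_j$-modules, one must verify that this collection of mixed surjection moments pins down a unique probability distribution. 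This is handled by adapting the robust-moment framework of \cite{Woo17, Woo19} to the product-of-DVR setting, exploiting that the moments grow sufficiently slowly in the sizes $|H_j|$ to enforce uniqueness.
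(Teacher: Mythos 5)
Note first that this paper does not prove Theorem \ref{thm1c} at all: it is quoted from \cite[Theorem 2.1]{Lee22}, so there is no internal proof to compare you against. Judged on its own, your outline is the standard moment-method route for results of this type (and it parallels what the present paper does for the Hermitian case in Sections \ref{Sec3}--\ref{Sec5}). The core computation checks out: $R_j$-surjections from $\cok(P_j(A_n))$ correspond bijectively to surjective $\Z_p$-linear $f_j$ with $f_j A_n = T_j f_j$; for a tuple $F=(f_j)$ the probability is nonzero only if $\im(F)$ is $T$-stable, and then the idempotent decomposition of modules over $\prod_j R_j$ together with coordinatewise surjectivity forces $\im(F)=\prod_j H_j$; for surjective $F$ the column-by-column Haar computation gives exactly $(\prod_j |H_j|)^{-n}$, so $M_n(H_\bullet)=\#\Sur_{\Z_p}(\Z_p^n,\prod_j H_j)\cdot(\prod_j|H_j|)^{-n}\to 1$, which matches the mixed moments of the product Cohen--Lenstra distribution.

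Two points need more care than you give them. First, the Chinese Remainder step requires the ideals $(P_i)$ and $(P_j)$ to be comaximal in $\Z_p[t]$, not merely that the polynomials share no common factor; comaximality does follow from the hypothesis, since $\Z_p[t]/(P_i,P_j)$ is a finitely generated $\Z_p$-module that vanishes modulo $p$ (the reductions being distinct irreducibles) and hence is zero by Nakayama, but that is the argument you should record. Second, the concluding ``moments determine the distribution'' step is genuinely the load-bearing ingredient and cannot be cited verbatim from \cite{Woo17} or \cite{Woo19}, which treat modules over a single ring: you need the multi-index version for tuples of finite modules over the $R_j$ (equivalently, finite modules over $\prod_j R_j$), including the statement that convergence of moments implies convergence of the distributions and the truncation-by-exponent reduction used here in the proof of Theorem \ref{thm4m}. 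Such statements do exist (this is precisely what \cite{Lee22} and Sawin--Wood-type results supply, and limiting moments equal to $1$ are far below the relevant growth threshold, in the spirit of Lemma \ref{lem3b} and Theorem \ref{thm3c}), so your plan goes through, but as written this step is left as a black box rather than proved or precisely cited.
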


\subsection{Hermitian matrices over $p$-adic rings} \label{Sub12}

Before stating the main theorem of this paper, we summarize the basic results on quadratic extensions of $\Q_p$ and Hermitian matrices over $p$-adic rings. Every quadratic extension of $\Q_p$ is of the form $\Q_p(\sqrt{a})$ for some non-trivial element $a \in \Q_p^{\times} / (\Q_p^{\times})^2$. For $a, b \in \Q_p^{\times}$, we have $\Q_p(\sqrt{a}) = \Q_p(\sqrt{b})$ if and only if $\frac{b}{a} \in (\Q_p^{\times})^2$. Therefore the number of quadratic extensions of $\Q_p$ is given by $\left | \Q_p^{\times} / (\Q_p^{\times})^2 \right | - 1$. Since $\Q_p^{\times} \cong \Z \times \Z_p \times \Z/(p-1)\Z$ for odd $p$ and $\Q_2^{\times} \cong \Z \times \Z_2 \times \Z/2\Z$, there are $3$ quadratic extensions of $\Q_p$ for odd $p$ and $7$ quadratic extensions of $\Q_2$. 

Let $K$ be a quadratic extension of $\Q_p$ with the ring of integers $\OO := \OO_K$, the residue field $\kappa$ and the uniformizer $\pi$ that will be specified. Denote the generator of the Galois group $\Gal(K/\Q_p)$ by $\sigma$. Fix a primitive $(p^2-1)$-th root of unity $w$ in $\overline{\Q_p}$. Then $K = \Q_p(w)$ is the unique unramified quadratic extension of $\Q_p$ and satisfies $\OO = \Z_p[w]$ and $\kappa = \OO / p \OO \cong \F_{p^2}$ (\cite[Proposition II.7.12]{Neu99}). In this case, fix the uniformizer by $\pi = p$. The element $\sigma \in \Gal(K/\Q_p)$ maps to the Frobenius automorphism $\Frob_p \in \Gal(\F_{p^2}/\F_p)$ ($x \mapsto x^p$) so it satisfies $\sigma(w) = w^p$. 
If $K/\Q_p$ is ramified, then we always have $\OO = \Z_p[\pi]$ and $\kappa = \F_p$. When $K/\Q_p$ is ramified and $p>2$, there exists a uniformizer $\pi \in \OO$ such that $\sigma(\pi) = -\pi$. There are two types of ramified quadratic extensions of $\Q_2$ (\cite[p. 456]{Cho16}):

\begin{enumerate}
    \item $K=\Q_2(\sqrt{1+2u})$ for some $u \in \Z_2^{\times}$, $\pi := 1 + \sqrt{1+2u}$ is a uniformizer and $\sigma (\pi) = 2 - \pi$.
        
    \item $K=\Q_2(\sqrt{2u})$ for some $u \in \Z_2^{\times}$, $\pi := \sqrt{2u}$ is a uniformizer and $\sigma (\pi) = - \pi$.
\end{enumerate}

A matrix $A \in \M_n(\OO)$ is called \textit{Hermitian} if $A = \sigma(A^t)$, where $A^t$ denotes the transpose of $A$. This is equivalent to the condition that $A_{ij} = \sigma(A_{ji})$ for every $1 \leq i \leq j \leq n$, where $A_{ij}$ denotes the $(i,j)$-th entry of the matrix $A$. Denote the set of $n \times n$ Hermitian matrices over $\OO$ by $\HH_n(\OO)$. For an extension of finite fields $\F_{p^2}/\F_p$, the set $\HH_n(\F_{p^2})$ is defined by the same way.

A \textit{Hermitian lattice} over $\OO$ of \textit{rank} $n$ is a free $\OO$-module $L$ of rank $n$ equipped with a bi-additive map $h : L \times L \rightarrow \OO$ such that $h(y, x) = \sigma(h(x,y))$ and $h(ax, by) = a \sigma(b) h(x,y)$ for every $a, b \in \OO$ and $x, y \in L$. 
When $(L, h)$ is a Hermitian lattice with an $\OO$-basis $v_1, \cdots, v_n$, a matrix $H = (H_{i, j})_{1 \leq i,j \leq n} \in \M_n(\OO)$ given by $H_{i, j} = h(v_j, v_i)$ is Hermitian. Conversely, if $H \in \HH_n(\OO)$, then $(L, h)$ given by $L = \OO^n$ and $h(x, y) = \sigma(y^T) H x$ is a Hermitian lattice and we have $h(e_j, e_i) = e_i^T H e_j = H_{i, j}$ where $e_1, \cdots, e_n$ is the standard basis of $L$.

We say two Hermitian matrices $A, B \in \HH_n(\OO)$ are \textit{equivalent} if $B = YA \sigma(Y^t)$ for some $Y \in \GL_n(\OO)$. The correspondence between Hermitian lattices and Hermitian matrices gives a bijection between the set of equivalent classes of Hermitian matrices in $\HH_n(\OO)$ and the set of isomorphism classes of Hermitian lattices over $\OO$ of rank $n$.

\subsection{Main results and the structure of the paper} \label{Sub13}

The purpose of this paper is to establish the universality result for the distribution of the cokernels of random $p$-adic Hermitian matrices. First we provide the definition of $\varepsilon$-balanced random matrix in $\HH_n(\OO)$. 
We will consider the unramified and ramified cases separately. Let $X \in \HH_n(\OO)$. 
If $K/\Q_p$ is unramified, then $X_{ij} = Y_{ij} + w Z_{ij}$ for some $Y_{ij}, Z_{ij} \in \Z_p$. If $K/\Q_p$ is ramified, then $X_{ij} = Y_{ij} + \pi Z_{ij}$ for some $Y_{ij}, Z_{ij} \in \Z_p$. 
For both cases, $Z_{ii}=0$ for each $1 \leq i \leq n$ and $X$ is determined by $n^2$ elements $Y_{ij}$ ($i \leq j$), $Z_{ij}$ ($i < j$). 

Let $E^{ij} \in \M_n(\Z_p)$ be a matrix defined by $(E^{ij})_{kl} = \delta_{ik} \delta_{jl}$.
 If $K/\Q_p$ is unramified, then $\HH_n(\OO)$ is generated by $n^2$ matrices $E^{ij}$ ($ i \le j$) and $wE^{ij}$ ($i < j$) as a $\Z_p$-module. If $K/\Q_p$ is ramified, then $\HH_n(\OO)$ is generated by $n^2$ matrices $E^{ij}$ ($ i \le j$) and $\pi E^{ij}$ ($i < j$) as a $\Z_p$-module. Therefore an additive measure on $\HH_n(\OO)$ defined by the product of the Haar probability measures on $Y_{ij}$ ($i \leq j$), $Z_{ij}$ ($i < j$) is same as the Haar probability measure on $\HH_n(\OO)$ by the uniqueness of the Haar probability measure.

\begin{definition} \label{def1f}
Let $0 < \varepsilon < 1$. A random matrix $X$ in $\HH_n(\OO)$ is \textit{$\varepsilon$-balanced} if the $n^2$ elements $Y_{ij}$ ($i \leq j$), $Z_{ij}$ ($i < j$) in $\Z_p$ are independent and $\varepsilon$-balanced.
\end{definition}

The following remarks shows that our definition of $\varepsilon$-balanced random matrix in $\HH_n(\OO)$ is independent of the choice of the primitive $(p^2-1)$-th root of unity $w$ (unramified case) and the uniformizer $\pi$ (ramified case).

\begin{remark} \label{rmk1g}
\begin{enumerate}
    \item Assume that $K/\Q_p$ is unramified. By the relation
$$
(p+1) + (w^{p-1}-1) \sum_{i=1}^{p+1}i w^{(p+1-i)(p-1)}
= \frac{w^{p^2-1}-1}{w^{p-1}-1} = 0,
$$
we have $w - \sigma(w) = w(1-w^{p-1}) \in \OO^{\times}$. Now let $w'$ be any primitive $(p^2-1)$-th root of unity in $\overline{\Q_p}$. For random elements $x , y \in \Z_p$, we have $x + y w = x' + y' w'$ for 
    $$
    x' = x + \frac{w' \sigma(w) - w \sigma(w')}{w' - \sigma(w')}y, \, y' = \frac{w - \sigma(w)}{w' - \sigma(w')} y.
    $$
    Then we have $x', y' \in \Z_p$ and $x$ and $y$ are independent and $\varepsilon$-balanced if and only if $x'$ and $y'$ are independent and $\varepsilon$-balanced.

    \item Assume that $K/\Q_p$ is ramified and let $\pi'$ be any uniformizer of $K$. Then $\pi' \equiv u \pi \,\, (\text{mod } \pi^2)$ for some $u \in \Z_p^{\times}$. For random elements $x , y \in \Z_p$, we have $x + y \pi' = x' + y' \pi$ for some $x', y' \in \Z_p$ such that $x \equiv x' \,\, (\text{mod } p)$ and $uy \equiv y' \,\, (\text{mod } p)$ so $x$ and $y$ are independent and $\varepsilon$-balanced if and only if $x'$ and $y'$ are independent and $\varepsilon$-balanced. 
\end{enumerate}
\end{remark}

Let $\Gamma$ be an $\OO$-module and ${}^{\sigma} \Gamma$ be its conjugate which is same as $\Gamma$ as abelian groups, with the scalar multiplication $r \cdot g := \sigma(r) g$. 
A \textit{Hermitian pairing} on $\Gamma$ is a bi-additive map $\delta : \Gamma \times \Gamma \rightarrow K/\OO$ such that $\delta(y, x) = \sigma(\delta(x,y))$ and $\delta(ax, by) = a \sigma(b)\delta(x,y)$ for every $a, b \in \OO$ and $x, y \in \Gamma$. 
We say a Hermitian pairing $\delta : \Gamma \times \Gamma \rightarrow K/\OO$ is \textit{perfect} if ${}^{\sigma} \Gamma \rightarrow \Hom_\OO(\Gamma, K/\OO)$ ($g \mapsto \delta(\cdot, g)$) is an $\OO$-module isomorphism. 
The following theorem is the main result of this paper, which settles a problem suggested by Wood \cite[Open Problem 3.16]{Woo22} in the case that $\mathfrak{o}$ is the ring of integers of a quadratic extension of $\Q_p$. 

\begin{theorem} \label{thm1h}
Let $0 < \varepsilon < 1$ be a real number, $X_n \in \HH_n(\OO)$ be an $\varepsilon$-balanced random matrix for each $n$ and $\Gamma$ be a finite $\OO$-module.
\begin{enumerate}
    \item (Theorem \ref{thm4m}) If $K/\Q_p$ is unramified, then
\begin{equation} \label{eq1c}
\lim_{n \rightarrow \infty} \PP(\cok(X_n) \cong \Gamma) 
= \frac{ \# \left\{ \text{Hermitian, perfect } \delta : \Gamma \times \Gamma \rightarrow K/\OO \right\}}{\left| \Aut_{\OO}(\Gamma) \right|} \prod_{i=1}^{\infty}(1 + \frac{(-1)^i}{p^i}).
\end{equation}
    
    \item (Theorem \ref{thm5i}) If $K/\Q_p$ is ramified, then
\begin{equation} \label{eq1d}
\lim_{n \rightarrow \infty} \PP(\cok(X_n) \cong \Gamma) 
= \frac{ \# \left\{ \text{Hermitian, perfect } \delta : \Gamma \times \Gamma \rightarrow K/\OO \right\}}{\left| \Aut_{\OO}(\Gamma) \right|} \prod_{i=1}^{\infty}(1 - \frac{1}{p^{2i-1}}).
\end{equation}
\end{enumerate}
\end{theorem}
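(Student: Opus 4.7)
The approach is the moment method developed by Wood \cite{Woo17, Woo19} and Nguyen--Wood \cite{NW22}. The plan is to compute the $\OO$-linear surjection moments
$$
M_n(G) := \EE[\#\Sur_\OO(\cok(X_n), G)]
$$
for each finite $\OO$-module $G$, show that $\lim_{n \to \infty} M_n(G) = \# \{ \text{Hermitian pairings } G \times G \to K/\OO \}$, and then invert the moments to recover the distribution.

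First, I would expand $M_n(G) = \sum_{F \in \Sur_\OO(\OO^n, G)} \PP(F X_n = 0)$. Parameterizing $X_n$ in the $\Z_p$-basis $\{E^{ii}\}_i \cup \{E^{ij} + E^{ji}, \tau E^{ij} + \sigma(\tau) E^{ji}\}_{i<j}$ with $\tau = w$ (unramified) or $\tau = \pi$ (ramified), the constraint $FX_n = 0$ becomes a system of $\OO$-linear equations in the $Y_{ij}, Z_{ij} \in \Z_p$ with values in $G$. Crucially, Hermitian symmetry pairs the equation coming from column $j$ with its $\sigma$-conjugate from column $i$; in the Haar case, counting the solutions shows that the freedom remaining after fixing $F$ parameterizes a Hermitian pairing on $G$, giving $\# \{ \text{Hermitian pairings } G \times G \to K/\OO \}$ in the limit.

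The central technical step is the passage from Haar to $\varepsilon$-balanced, adapted from \cite[Section 3]{Woo17}. Given $F$, decompose it relative to the largest $\OO$-submodule $H \subsetneq G$ such that many standard basis vectors $e_i$ satisfy $F(e_i) \in H$; partition the surjections into \emph{robust} $F$ (few such $e_i$) and \emph{depleted} $F$. For robust $F$, a standard equidistribution lemma --- using that a sum of many independent $\varepsilon$-balanced $\Z_p$-variables is nearly uniform modulo any power of $p$ --- gives $\PP(FX_n = 0)$ equal to the Haar value up to negligible error, while depleted $F$ are few enough that their total contribution vanishes. A subtlety specific to the Hermitian setting is that the two $\Z_p$-coordinates $Y_{ij}, Z_{ij}$ attached to the $\sigma$-orbit $\{(i,j), (j,i)\}$ must be handled jointly, and the codimension counts differ between the unramified case (residue field $\kappa = \F_{p^2}$) and the ramified case ($\kappa = \F_p$).

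Finally, I would invert the moments using the moments-determine-distribution argument of \cite{Woo17} to obtain
$$
\lim_{n \to \infty} \PP(\cok(X_n) \cong \Gamma) = \frac{\# \{\text{Hermitian perfect } \delta : \Gamma \times \Gamma \to K/\OO\}}{|\Aut_\OO(\Gamma)|} \cdot C,
$$
where $C$ is a constant independent of $\Gamma$. The value of $C$ is pinned down by the requirement $\sum_\Gamma \PP(\cok(X_n) \cong \Gamma) = 1$, and expanding this sum via orbit counting for Hermitian forms should produce the Euler products $\prod_i (1 + (-1)^i p^{-i})$ and $\prod_i (1 - p^{1-2i})$ in the unramified and ramified cases respectively. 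The main obstacle is the moment computation itself: cleanly identifying the Haar moment with the count of Hermitian pairings on $G$, and handling the distinct combinatorics of the two cases uniformly within the swapping argument, since the $\sigma$-twisting obstructs a direct reduction to either Wood's symmetric matrix calculation \cite{Woo17} or the general matrix calculation \cite{NW22}.
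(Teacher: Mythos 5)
Your overall strategy (compute the limiting moments $\lim_n \EE[\#\Sur_\OO(\cok X_n,G)]$, then appeal to a moments-determine-distribution theorem) is the same skeleton the paper uses, and your identification of the limiting moment with the number of Hermitian pairings $G\times G\to K/\OO$ is correct. But there is a genuine gap in how you propose to get from the moments to the \emph{explicit} formula \eqref{eq1c}--\eqref{eq1d}. Moment inversion only gives \emph{uniqueness} of the limiting distribution among distributions with those moments; it does not tell you that the limit has the shape $\#\{\text{perfect Hermitian }\delta\}/|\Aut_\OO(\Gamma)|\cdot C$ for a constant $C$ independent of $\Gamma$. To extract that shape you must either (a) exhibit an explicit distribution with exactly these moments and invoke uniqueness, or (b) prove the Hall-type identity
$\sum_\Gamma \frac{\#\{\text{perfect Hermitian pairings on }\Gamma\}}{|\Aut_\OO(\Gamma)|}\,\#\Sur_\OO(\Gamma,G)\;\propto\;\#\{\text{Hermitian pairings on }G\}$,
which is a nontrivial counting statement (note the surjections go \emph{from} $\Gamma$ \emph{onto} $G$, so it is not the easy bijection between pairings on $G$ and perfect pairings on quotients of $G$). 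Your further step of "pinning down $C$ by normalizing to total mass $1$" likewise presupposes a mass formula for $\sum_\Gamma \#\{\text{perfect }\delta\}/|\Aut_\OO(\Gamma)|$ that you do not supply, and also that no mass escapes in the limit. The paper closes exactly this gap by option (a): it computes the Haar case completely and exactly for each $n$ (Theorem \ref{thm2c}), using Jacobowitz's classification of Hermitian lattices (Propositions \ref{prop2a}, \ref{prop2b}), the count of pairings with prescribed $(\cok,\delta)$ (Lemma \ref{lem2d}), and the counts of invertible Hermitian/symmetric matrices over the residue field (Lemma \ref{lem2e}); this is where the Euler products $\prod_i(1+(-1)^ip^{-i})$ and $\prod_i(1-p^{1-2i})$ actually come from. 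Without this (or a proof of the identity in (b)), your argument only shows that some universal limit exists, not the stated formula.

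A secondary, smaller concern: in the universality step your decomposition is phrased only on the side of $F$ ("robust" vs.\ "depleted"), with the claim that for good $F$ a sum of many independent $\varepsilon$-balanced variables is nearly uniform. Because $X$ is Hermitian, the coordinates of $FX$ are strongly dependent, and certain character combinations are \emph{exactly} constant rather than equidistributed: these are the "special" dual maps $C\in\Hom_R(V,({}^\sigma G)^*)$ (built from the $\alpha_{ij}^c,\alpha_i^d$ in Sections \ref{Sec4}--\ref{Sec5}), whose count is precisely the moment $M_G$. A correct proof needs the Fourier expansion over $C$ together with the trichotomy special / weak / robust for $C$ (plus the code-of-distance and depth analysis for $F$), not just near-uniformity for well-spread $F$; your sketch gestures at the right answer ("the remaining freedom parameterizes a Hermitian pairing on $G$") but does not contain the mechanism that controls the non-special characters in the $\varepsilon$-balanced case.
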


In Section \ref{Sec2}, we provide a proof of Theorem \ref{thm1h} under the assumption that each $X_n$ is equidistributed with respect to Haar measure (Theorem \ref{thm2c}). Our proof follows the strategy of \cite[Theorem 2]{CKLPW15} which consists of four steps (see the paragraph after Lemma \ref{lem2f}). The most technical part of the proof is the second step, i.e. the computation of the probability that $\left< \; , \; \right>_{X_n} = \left< \; , \; \right>_M$ for a given $M \in \HH_n(\OO)$. When $K/\Q_p$ is ramified, this computation is even more complicated than the proof of \cite[Theorem 2]{CKLPW15} for $p=2$.

To extend this result to $\varepsilon$-balanced Hermitian matrices, we use the \textit{moments} as in the symmetric case. For a random $\OO$-module $M$ and a given $\OO$-module $G$, the $G$\textit{-moment} of $M$ is defined by the expected value $\mathbb{E}(\# \Sur_{\OO}(M, G))$ of the number of surjective $\OO$-module homomorphisms from $M$ to $G$. 
The key point is that if we know the $G$-moment of $M$ for every $G$ and if the moments are not too large, then we can recover the distribution of a random $\OO$-module $M$. In Section \ref{Sec3}, we show that the limiting distribution of the cokernels of Haar random Hermitian matrices over $\OO$ is determined by their moments (Theorem \ref{thm3c}). We conclude the proof of Theorem \ref{thm1h} in the general case by combining the following result with Theorem \ref{thm2c} and \ref{thm3c}.

\begin{theorem} \label{thm1i}
Let $X_n$ be as in Theorem \ref{thm1h} and $G = \prod_{i=1}^{r} \OO / \pi^{\lambda_i} \OO$ for $\lambda_1 \geq \cdots \geq \lambda_r \geq 1$. 
\begin{enumerate}
    \item (Theorem \ref{thm4l}) If $K/\Q_p$ is unramified, then
    \begin{equation} \label{eq1e}
    \lim_{n \rightarrow \infty} \EE(\# \Sur_{\OO}(\cok(X_n), G)) = p^{\sum_{i=1}^{r} (2i-1)\lambda_i}.
    \end{equation}
    
    \item (Theorem \ref{thm5h}) If $K/\Q_p$ is ramified, then 
    \begin{equation} \label{eq1f}
    \lim_{n \rightarrow \infty} \EE(\# \Sur_{\OO}(\cok(X_n), G)) = p^{\sum_{i=1}^{r}\left ( (i-1)\lambda_i + \left \lfloor \frac{\lambda_i}{2}\right \rfloor \right )}.
    \end{equation}
\end{enumerate}
For both cases, the error term is exponentially small in $n$.
\end{theorem}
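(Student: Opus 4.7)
The strategy is the moment-via-Fourier-analysis framework of Wood, adapted to the Hermitian setting. The starting identity
\[
\#\Sur_\OO(\cok X_n,G)=\#\bigl\{\phi\in\Sur_\OO(\OO^n,G):\phi X_n=0\bigr\}
\]
gives $\EE[\#\Sur_\OO(\cok X_n,G)]=\sum_\phi \PP(\phi X_n=0)$. Writing $f_i:=\phi(e_i)\in G$ and parametrising $X_n$ by its diagonal entries $a_k\in\Z_p$ and upper off-diagonal entries $b_{jk}\in\OO$ ($j<k$, so $x_{kj}=\sigma(b_{jk})$), the constraint $\phi X_n=0$ is the system
\[
f_k a_k+\sum_{j<k} f_j b_{jk}+\sum_{j>k} f_j\sigma(b_{kj})=0\in G\qquad(1\le k\le n).
\]
Fourier inversion on $G^n$ and independence of the entries then yield
\[
\PP(\phi X_n=0)=\frac{1}{|G|^n}\sum_{\psi_1,\ldots,\psi_n\in\widehat{G}}\prod_{k}\EE_{a_k}[\psi_k(f_ka_k)]\prod_{j<k}\EE_{b_{jk}}[\psi_j(f_k\sigma(b_{jk}))\psi_k(f_jb_{jk})],
\]
which I split into a main term and an error term according to whether every factor equals $1$.

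\textbf{Main term.} A tuple $(\psi_1,\ldots,\psi_n)$ contributes $1$ at the $k$th diagonal factor iff $a\mapsto\psi_k(f_ka)$ is trivial on $\Z_p$, and at the $(j,k)$ off-diagonal factor iff $b\mapsto\psi_j(f_k\sigma(b))\psi_k(f_jb)$ is trivial on $\OO$. These conditions describe exactly a Hermitian dual-pairing relation between $\phi$ and $(\psi_k)\in({}^\sigma\widehat G)^n$, and their truth is independent of the law of the $a_k,b_{jk}$. In the Haar case each factor is strictly $0$ or $1$, so the full probability coincides with the main term alone; hence by Theorem \ref{thm2c} the summed main term equals the value in \eqref{eq1e} resp.\ \eqref{eq1f}. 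The unramified/ramified dichotomy is visible here through $\Z_p\hookrightarrow\OO$: in the unramified case $\Z_p\twoheadrightarrow\OO/\pi$, while in the ramified case $\Z_p$ surjects only onto the even-power layers of the $\pi$-adic filtration on $G$, producing the $\lfloor\lambda_i/2\rfloor$.

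\textbf{Error term.} For any character tuple that fails some triviality condition modulo $\pi$, $\varepsilon$-balancedness gives a uniform gap $|\EE[\cdot]|\le 1-c(\varepsilon)$ in the offending factor. Deeper-level failures are reduced to the mod-$\pi$ case by descent along the $\pi$-adic filtration, following the robust-surjection / code-depth machinery of \cite{Woo17,NW22}: partition $\Sur_\OO(\OO^n,G)$ by the depth profile of $\phi$ with respect to $G\supseteq\pi G\supseteq\pi^2G\supseteq\cdots$, bound the number of $\phi$'s of each profile, and combine with the Fourier gap to obtain exponential decay in $n$ of the total error.

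\textbf{Main obstacle.} The Hermitian coupling is the central technical difficulty: each $b_{jk}$ appears simultaneously in equations $j$ and $k$, and with a $\sigma$-twist on one side, so its Fourier factor depends on both $\psi_j$ and $\psi_k$ at once. Consequently, the standard row-by-row "responsible character" argument must be replaced by a pairing on indices that charges every nontrivial $\psi_k$ to a diagonal or off-diagonal gap without double-counting. The ramified $p=2$ case is additionally delicate because $\sigma(\pi)=2-\pi$ (rather than $-\pi$) entangles the diagonal and off-diagonal contributions through the nontrivial trace map $\OO\to\Z_p$, echoing the extra complications noted after Theorem \ref{thm2c}; here a finer analysis of characters of $\OO/\pi^N$ and of the image of $\Z_p$ under the trace pairing is required.
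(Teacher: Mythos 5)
Your setup is the same as the paper's (expand $\EE[\#\Sur]$ as $\sum_\phi \PP(\phi X_n=0)$, Fourier-expand each probability over character tuples, split into the distribution-independent ``trivial'' tuples and the rest), and your identification of the main term is a legitimate shortcut: since the triviality conditions depend only on $\phi$, the main term equals the exact Haar-case probability for each $\phi$, so its sum is the finite-$n$ Haar moment. Note, however, that the value of that moment comes from Theorem \ref{thm2g}, not Theorem \ref{thm2c}; quoting the limiting distribution of Theorem \ref{thm2c} and summing it to get moments would require an unjustified exchange of limits, whereas the paper computes the Haar moments directly (and, in Sections \ref{Sec4}--\ref{Sec5}, recovers the same count by explicitly classifying the annihilating tuples as the ``special'' elements $\alpha_{ij}^{c}$, $\alpha_i^{d}$).

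The genuine gap is the error term, which is where essentially all of the work in Theorems \ref{thm4l} and \ref{thm5h} lies. Your bound ``each offending tuple has at least one factor of modulus at most $1-c(\varepsilon)$'' cannot close the argument: there are on the order of $\left|G\right|^{n}$ character tuples per $\phi$, so a single bounded-away-from-$1$ factor per tuple gives an error that swamps the main term. What is needed is the structural dichotomy of the paper: (i) an exact classification of the tuples with \emph{all} coefficients $E(C,F,i,j)$ zero (the special $C$, whose construction differs in the unramified, ramified type I, and ramified type II cases, cf.\ Lemmas \ref{lem4g}--\ref{lem4i} and \ref{lem5d1}--\ref{lem5e}); (ii) the statement that when $F$ is a code of distance $\delta n$, every non-special tuple has at least $\delta n/2$ nonzero coefficients (Propositions \ref{prop4k} and \ref{prop5g}, the analogue of Wood's Lemma 3.7, proved by an image-counting argument for the map $m_F$); and (iii) the weak/robust and depth decompositions (Lemmas \ref{lem4f1}, \ref{lem4f2}, \ref{lem4x1}, \ref{lem4x3}, \ref{lem4x4} and their Section \ref{Sec5} analogues) to handle robust tuples and non-code $\phi$. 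You explicitly flag exactly this Hermitian coupling --- the $\sigma$-twisted dependence of each off-diagonal factor on two characters, and the degenerate trace in the ramified $p=2$ case (which the paper circumvents by replacing $\tr$ with the map $T$) --- as the ``main obstacle,'' but you do not resolve it; as written, the descent ``following the machinery of Wood'' does not apply verbatim, and without items (i)--(iii) the proof of exponential decay of the error does not go through.
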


The unramified and ramified cases should be considered separately in the proof of the above theorem. Moreover, the ramified extensions $K/\Q_p$ are classified by two types (see the first paragraph of Section \ref{Sec5}) and the proof for these cases are slightly different for some technical reasons. 
We prove the unramified case in Section \ref{Sec4} and the ramified case in Section \ref{Sec5}. Our proof of Theorem \ref{thm1i} is based on the innovative work of Wood \cite{Woo17}, but it cannot be directly adapted to our case.
In particular, we need some effort to deal with the linear and conjugate-linear maps simultaneously, which is different from the symmetric case where every map is linear. For example, a lot of conjugations appear during the computations on the maps $\alpha_{ij}^{c}$ and $\alpha_i^{d}$ in $\Hom_R(V, ({}^{\sigma}G)^*)$, which make the proof more involved than the proof for the symmetric case.

\section{Haar random $p$-adic Hermitian matrices} \label{Sec2}

Throughout this section, we assume that random matrices are equidistributed with respect to Haar measure. More general (i.e. $\varepsilon$-balanced) random matrices will be considered in Section \ref{Sec4} and \ref{Sec5}. This section is based on \cite[Section 2]{CKLPW15}, where the distribution of the cokernel of a Haar random symmetric matrix over $\Z_p$ was computed. Some notations are also borrowed from \cite{CKLPW15}.

Jacobowitz \cite{Jac62} classified Hermitian lattices over $p$-adic rings. We follow the exposition of Yu \cite[Section 2]{Yu12}, whose original reference is also \cite{Jac62}. Let $(L, h)$ be a Hermitian lattice over $\OO$. (We refer Section \ref{Sub12} for the definition of a Hermitian lattice.) A vector $x \in L$ is \textit{maximal} if $x \notin \pi L$. (Recall that $\pi$ is a uniformizer of $\OO$.) For each $i \in \Z$, $(L, h)$ is called $\pi^i$-\textit{modular} if $h(x, L) = \pi^i \OO$ for every maximal $x \in L$ (\cite[p.447]{Jac62}). We say $(L, h)$ is \textit{modular} if it is $\pi^i$-modular for some $i \in \Z$. Any Hermitian lattice can be written as an orthogonal sum of modular lattices \cite[Proposition 4.3]{Jac62}. When $K/\Q_p$ is unramified, any $\pi^i$-modular lattice $(L, h)$ is isomorphic to an orthogonal sum of the copies of the $\pi^i$-modular lattice $(\pi^i)$ of rank $1$ \cite[Theorem 7.1]{Jac62}. Using the correspondence between Hermitian lattices and Hermitian matrices, we obtain the following proposition.

\begin{proposition} \label{prop2a}
Assume that $K/\Q_p$ is unramified. For every $A \in \HH_n(\OO)$, there exists $Y \in \GL_n(\OO)$ such that $YA \sigma(Y^t)$ is diagonal. 
\end{proposition}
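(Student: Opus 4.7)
The plan is to reduce the statement to the two structure results of Jacobowitz cited in the paragraph preceding the proposition, using the correspondence between Hermitian lattices and Hermitian matrices described in Section~\ref{Sub12}.

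First I would attach to $A$ the Hermitian lattice $(L, h) = (\OO^n, h_A)$ with $h_A(x,y) = \sigma(y^t) A x$, as in the correspondence recalled in Section~\ref{Sub12}. By \cite[Proposition 4.3]{Jac62}, the lattice $(L,h)$ decomposes as an orthogonal sum of modular sublattices $L = \bigoplus_{i} L_i$, where each $L_i$ is $\pi^{a_i}$-modular for some $a_i \in \Z$. Since $K/\Q_p$ is unramified, \cite[Theorem 7.1]{Jac62} gives that each $L_i$ further decomposes as an orthogonal sum of rank-one $\pi^{a_i}$-modular sublattices. Concatenating these decompositions yields an $\OO$-basis $v_1, \dots, v_n$ of $L$ that is orthogonal with respect to $h$, i.e.\ $h(v_i, v_j) = 0$ for $i \neq j$.

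Next I would translate this orthogonal basis back into a statement about matrices. Let $T \in \GL_n(\OO)$ be the matrix whose $i$-th column expresses $v_i$ in terms of the standard basis of $\OO^n$. A direct computation gives
\begin{equation*}
h(v_j, v_i) = \sigma(v_i^t) A v_j = \bigl(\sigma(T^t) A T\bigr)_{ij},
\end{equation*}
so $D := \sigma(T^t) A T$ is the diagonal matrix $\diag\bigl(h(v_1,v_1), \dots, h(v_n, v_n)\bigr)$. Setting $Y := \sigma(T^t) \in \GL_n(\OO)$, one has $\sigma(Y^t) = T$ and therefore $Y A \sigma(Y^t) = D$ is diagonal, as required.

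There is no real obstacle here beyond bookkeeping: the nontrivial content sits entirely in the two cited results of Jacobowitz, and the unramifiedness of $K/\Q_p$ enters only through \cite[Theorem 7.1]{Jac62}, which is precisely where the ramified case fails (rank-two hyperbolic $\pi^i$-modular summands can appear, preventing a fully diagonal form). The only point requiring mild care is matching the two conjugate-transpose conventions: the lattice change-of-basis naturally produces $\sigma(T^t) A T$, and one must set $Y = \sigma(T^t)$ (rather than $Y = T$) to obtain the form $YA\sigma(Y^t)$ used in the paper's equivalence relation.
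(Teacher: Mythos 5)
Your proof is correct and follows the same route the paper intends: the paper derives Proposition~\ref{prop2a} directly from the orthogonal splitting into modular lattices \cite[Proposition 4.3]{Jac62} and the rank-one decomposition of modular lattices in the unramified case \cite[Theorem 7.1]{Jac62}, via the lattice--matrix correspondence of Section~\ref{Sub12}, exactly as you do. Your extra bookkeeping (taking $Y = \sigma(T^t)$ so that $YA\sigma(Y^t) = \sigma(T^t)AT$ is the Gram matrix in the orthogonal basis) is the computation the paper leaves implicit, and it is carried out correctly.
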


The classification of Hermitian lattices over $\OO$ for the ramified case can be found in \cite[Proposition 8.1]{Jac62} (the case $p>2$) and \cite[Theorem 2.2]{Cho16} (the case $p=2$). Using the correspondence between Hermitian lattices and Hermitian matrices, we can summarize the results of \cite[Proposition 8.1]{Jac62} and \cite[Theorem 2.2]{Cho16} as follow. For $a, b \in \Z_p$ and $c \in \OO$, denote
$$
A(a,b,c) := \begin{pmatrix}
a & c \\
\sigma (c) & b \\
\end{pmatrix} \in \HH_2(\OO).
$$

\begin{proposition} \label{prop2b}
Assume that $K/\Q_p$ is ramified. For every $A \in \HH_n(\OO)$, there exists $Y \in \GL_n(\OO)$ such that $YA \sigma(Y^t)$ is a block diagonal matrix consisting of
\begin{enumerate}
    \item the zero diagonal blocks,
    
    \item diagonal blocks $\begin{pmatrix} u_ip^{d_i} \end{pmatrix}$ for some $d_i \geq 0$ and $u_i \in \Z_p^{\times}$,
    
    \item $2 \times 2$ blocks of the form $B_j = A(a_j, b_j, c_j)$ for some $a_j, b_j \in \Z_p$ and $c_j \in \OO$ such that $c_j \neq 0$, $\displaystyle \frac{a_j}{c_j} \in \OO$ and $\displaystyle \frac{b_j}{\pi c_j} \in \OO$.
\end{enumerate}
\end{proposition}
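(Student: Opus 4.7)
My plan is to reduce Proposition \ref{prop2b} to the classification of Hermitian lattices over ramified $p$-adic rings given in \cite[Proposition 8.1]{Jac62} (for $p > 2$) and \cite[Theorem 2.2]{Cho16} (for $p = 2$), and then to transport the result back through the lattice--matrix correspondence recalled in Section \ref{Sub12}.

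To begin, I would form the Hermitian lattice $(L, h)$ with $L = \OO^n$ and $h(x, y) = \sigma(y^t) A x$, so that $A$ is the Gram matrix of $h$ in the standard basis. A change of basis by $X \in \GL_n(\OO)$ replaces the Gram matrix by $\sigma(X^t) A X$, which equals $Y A \sigma(Y^t)$ for $Y := \sigma(X^t) \in \GL_n(\OO)$; since $X \mapsto \sigma(X^t)$ is a bijection of $\GL_n(\OO)$, the statement becomes equivalent to exhibiting an $\OO$-basis of $L$ in which the Gram matrix of $h$ decomposes as a block-diagonal matrix whose blocks take the three prescribed forms.

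Next I would split off the radical $L^\perp = \{x \in L : h(x, L) = 0\}$, which is an $\OO$-direct summand because $L / L^\perp$ is torsion-free ($\pi x \in L^\perp$ forces $\pi h(x, y) = 0$ for every $y$, hence $h(x, y) = 0$ since $\OO$ has no zero divisors). The Gram matrix of the radical part is identically zero, giving the blocks of item (1). On an orthogonal complement of $L^\perp$ I would apply \cite[Proposition 4.3]{Jac62} to split the remaining non-degenerate lattice as an orthogonal sum of modular Hermitian sublattices, and then apply \cite[Proposition 8.1]{Jac62} when $p > 2$ or \cite[Theorem 2.2]{Cho16} when $p = 2$ to put each modular summand into normal form. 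Unlike the unramified setting of Proposition \ref{prop2a}, a $\pi^i$-modular lattice in the ramified case is generally an orthogonal sum of rank-$1$ lattices (contributing blocks $(u_i p^{d_i})$, item (2)) together with rank-$2$ ``off-diagonal'' lattices whose Gram matrix has the shape $A(a, b, c)$ where $c$ is the entry of smallest valuation, i.e., $c \neq 0$, $a \in c \OO$, and $b \in \pi c \OO$ (item (3)); reassembling these blocks along the orthogonal decomposition yields the desired $Y$.

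The main obstacle I anticipate is the notational translation in the last step. The rank-$2$ normal forms in \cite{Jac62} and \cite{Cho16} are stated differently depending on the parity of $p$ and, when $p = 2$, on the two classes of ramified quadratic extensions of $\Q_2$ listed after Remark \ref{rmk1g}; one has to verify in each case that each listed model can be rewritten as $A(a, b, c)$ with $a/c, b/(\pi c) \in \OO$, and conversely that every such $A(a, b, c)$ is equivalent to one of the cited models. Once this dictionary is in place the proposition follows by bookkeeping.
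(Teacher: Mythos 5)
Your proposal is correct and follows essentially the same route as the paper, which itself gives no detailed argument: it simply invokes the lattice--matrix correspondence, the orthogonal splitting into modular lattices from \cite[Proposition 4.3]{Jac62}, and the ramified classification results \cite[Proposition 8.1]{Jac62} (for $p>2$) and \cite[Theorem 2.2]{Cho16} (for $p=2$), stating Proposition \ref{prop2b} as a summary of those normal forms. Your additional bookkeeping (splitting off the radical to produce the zero blocks, and the identification $Y = \sigma(X^t)$ reconciling the two notions of equivalence) is sound and, if anything, more explicit than what the paper records.
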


Let $\Gamma$ be a finite $\OO$-module. 
Recall that we have defined the conjugate ${}^{\sigma} \Gamma$ of $\Gamma$ and a perfect Hermitian pairing on $\Gamma$ in Section \ref{Sub13}. 
When $\Gamma_1$, $\Gamma_2$ are finite $\OO$-modules and $\delta_i$ ($i = 1, 2$) is a perfect Hermitian pairing on $\Gamma_i$, we say $(\Gamma_1, \delta_1)$ and $(\Gamma_2, \delta_2)$ are \textit{isomorphic}  if there is an $\OO$-module isomorphism $f : \Gamma_1 \rightarrow \Gamma_2$ such that $\delta_1 (x, y) = \delta_2 ( f(x), f(y))$ for every $x, y \in \Gamma_1$. 
For $X \in \HH_n(\OO)$, a Hermitian pairing $\left< \; , \; \right>_X : \OO^n \times \OO^n \rightarrow K/\OO$ is defined by
$$
\left< x, \, y \right>_X := \sigma(y^t) X^{-1} x
$$
and this induces a perfect Hermitian pairing $\delta_X$ on $\cok(X)$. 
The following theorem provides the limiting distribution of the cokernel of a Haar random Hermitian matrix over $\OO$. 

\begin{theorem} \label{thm2c}
Let $X_n \in \HH_n(\OO)$ be a Haar random matrix for each $n$, $\Gamma$ be a finite $\OO$-module, $r := \dim_{\kappa}(\Gamma / \pi \Gamma)$ and $\delta$ be a perfect Hermitian pairing on $\Gamma$.
\begin{enumerate}
    \item If $K/\Q_p$ is unramified, then the probability that $(\cok(X_n), \delta_{X_n})$ is isomorphic to $(\Gamma, \delta)$ is 
\begin{equation*}
\mu_n (\Gamma, \delta) 
= \frac{1}{\left| \Aut_{\OO}(\Gamma, \delta) \right|} \prod_{j=n-r+1}^{n}(1-\frac{1}{p^{2j}}) \prod_{i=1}^{n-r}(1 + \frac{(-1)^i}{p^i}),
\end{equation*}
which implies that
\begin{equation} \label{eq2c}
\lim_{n \rightarrow \infty} \PP(\cok(X_n) \cong \Gamma) 
= \frac{ \# \left\{ \text{Hermitian, perfect } \delta : \Gamma \times \Gamma \rightarrow K/\OO \right\}}{\left| \Aut_{\OO}(\Gamma) \right|} \prod_{i=1}^{\infty}(1 + \frac{(-1)^i}{p^i}).
\end{equation}
    
    \item If $K/\Q_p$ is ramified, then the probability that $(\cok(X_n), \delta_{X_n})$ is isomorphic to $(\Gamma, \delta)$ is 
\begin{equation*}
\mu_n (\Gamma, \delta) 
= \frac{1}{\left| \Aut_{\OO}(\Gamma, \delta) \right|} \prod_{j=n-r+1}^{n}(1-\frac{1}{p^j}) \prod_{i=1}^{\left \lceil \frac{n-r}{2} \right \rceil}(1 - \frac{1}{p^{2i-1}}),
\end{equation*}
which implies that
\begin{equation} \label{eq2d}
\lim_{n \rightarrow \infty} \PP(\cok(X_n) \cong \Gamma) 
= \frac{ \# \left\{ \text{Hermitian, perfect } \delta : \Gamma \times \Gamma \rightarrow K/\OO \right\}}{\left| \Aut_{\OO}(\Gamma) \right|} \prod_{i=1}^{\infty}(1 - \frac{1}{p^{2i-1}}).
\end{equation}
\end{enumerate}
\end{theorem}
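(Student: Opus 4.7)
The plan is to follow the four-step strategy of \cite{CKLPW15}, adapted to the Hermitian setting and leveraging the Jacobowitz classification recorded in Propositions~\ref{prop2a} and \ref{prop2b}. The target is an explicit formula for $\mu_n(\Gamma,\delta)$; summing over the perfect Hermitian pairings $\delta$ on $\Gamma$ and then taking $n \to \infty$ will produce \eqref{eq2c} and \eqref{eq2d}.

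First I would verify that the map $X \mapsto (\cok(X),\delta_X)$ is essentially a bijection between $\GL_n(\OO)$-equivalence classes of non-degenerate matrices $X \in \HH_n(\OO)$ (under $X \mapsto Y X \sigma(Y^t)$) and isomorphism classes of pairs $(\Gamma,\delta)$ with $\dim_\kappa(\Gamma/\pi\Gamma) \leq n$. The direct implication is a short calculation with $M = Y X \sigma(Y^t)$, and the converse is where the uniqueness of the normal forms in Propositions~\ref{prop2a} and \ref{prop2b} is used. These classification results also let me choose, for a given $(\Gamma,\delta)$, a canonical representative $M = M(\Gamma,\delta) \in \HH_n(\OO)$: a diagonal matrix with entries $p^{\lambda_1},\dots,p^{\lambda_r},1,\dots,1$ in the unramified case, and a block diagonal matrix assembled out of $1\times 1$ blocks and $2\times 2$ blocks of the shape $A(a,b,c)$ in the ramified case.

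Second, for this fixed $M$ I compute $\PP(\langle\cdot,\cdot\rangle_{X_n}=\langle\cdot,\cdot\rangle_M)$ under the Haar measure on $\HH_n(\OO)$. The identity of pairings $\OO^n\times\OO^n\to K/\OO$ forces $X_n^{-1}-M^{-1}\in\HH_n(\OO)$ and hence $X_n\OO^n = M\OO^n$; parametrizing the $X_n$ that satisfy this condition as an explicit coset in $\HH_n(\OO)$ and reducing blockwise according to the shape of $M$ gives the Haar volume of the coset. Third, I sum this probability over all $M$ in the $\GL_n(\OO)$-orbit of my chosen representative: the orbit has size $|\GL_n(\OO)|/|\text{Stab}(M)|$, where $\text{Stab}(M) = \{Y\in\GL_n(\OO): YM\sigma(Y^t)=M\}$ is the $\OO$-unitary group of $M$, and this stabilizer fits in a short exact sequence with an explicit $\pi$-adic congruence kernel and quotient $\Aut_{\OO}(\Gamma,\delta)$, which is what installs the $|\Aut_{\OO}(\Gamma,\delta)|$ in the denominator. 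Fourth, plugging in the standard counts of $|\GL_n(\OO)|$ and of the relevant finite Hermitian unitary groups over the residue field $\kappa$ — whose local factors are $(1+(-1)^i p^{-i})$ in the unramified case and $(1-p^{-(2i-1)})$ in the ramified case — collapses the expression to the stated closed form for $\mu_n(\Gamma,\delta)$, and sending $n\to\infty$ kills the $\prod_{j=n-r+1}^n$ prefactor.

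The main obstacle, as the introduction flags, is the second step, and it is worst in the ramified setting. When $K/\Q_p$ is ramified, $M$ must contain $2\times 2$ blocks $A(a,b,c)$, so the coset parametrization does not decouple into independent rank-one pieces; the interaction of the Hermitian symmetry $X_n = \sigma(X_n^t)$ with the Galois action on $\pi$ has to be tracked entry by entry. For $p=2$ the two families of ramified extensions recorded in Section~\ref{Sub12} — those with $\sigma(\pi) = 2-\pi$ and those with $\sigma(\pi) = -\pi$ — produce different $\pi$-adic congruences between the $\Z_p$-part and the $\pi\Z_p$-part of each off-diagonal entry, and each demands its own volume calculation. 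Reconciling these $p$-adic volumes with the orders of the corresponding Hermitian unitary groups over $\OO/\pi^k$ so that everything recombines into the clean asymptotic factor $\prod(1-p^{-(2i-1)})$ is where the argument becomes most delicate.
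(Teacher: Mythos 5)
Your second step (computing $\PP(\left< \; , \; \right>_{X_n}=\left< \; , \; \right>_M)$ blockwise for a normal-form $M$ supplied by Propositions \ref{prop2a} and \ref{prop2b}) is essentially the paper's computation, and your starting point $X_n^{-1}-M^{-1}\in\HH_n(\OO)$ is correct; the paper converts it via Lemma \ref{lem2f} into the coset-plus-rank condition $X_n\in M+M\HH_n(\OO)M$ and $\rank_{\kappa}(\overline{X_n})=\rank_{\kappa}(\overline{M})$ before integrating (note the set of such $X_n$ is not itself a coset). The genuine gap is in your first and third steps, i.e.\ in how you count. First, the map from $\GL_n(\OO)$-congruence classes of nondegenerate $X\in\HH_n(\OO)$ to isomorphism classes of pairs $(\Gamma,\delta)$ is not injective in the ramified case: already for $n=1$ the lattices $(u)$ and $(u')$ with $u/u'\in\Z_p^{\times}$ a non-norm unit are inequivalent (the $\GL_1(\OO)$-action rescales by $N_{K/\Q_p}(\OO^{\times})$, which has index $2$ in $\Z_p^{\times}$ when $K/\Q_p$ is ramified) yet both have trivial cokernel pair; so the event $\{(\cok(X_n),\delta_{X_n})\cong(\Gamma,\delta)\}$ is in general a union of several orbits and summing over one canonical orbit undercounts. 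Second, even on a single orbit the orbit--stabilizer bookkeeping you describe does not apply: $\GL_n(\OO)$ and $\operatorname{Stab}(M)$ are infinite compact groups, the orbit is uncountable, and distinct matrices $M'$ in the orbit induce the \emph{same} pairing $\left< \; , \; \right>_{M'}$ whenever they lie in a common coset of the form $M''+M''\HH_n(\OO)M''$, so the events you propose to sum are massively non-disjoint. The finite object you actually need to count is the set of Hermitian pairings $\left[ \; , \; \right]$ on $\OO^n$ whose cokernel pair is isomorphic to $(\Gamma,\delta)$; its stabilizer in $\GL_n(\OO)$ is an open subgroup strictly larger than the unitary group of $M$, and the count equals $\#\Sur_{\OO}(\OO^n,\Gamma)/\left|\Aut_{\OO}(\Gamma,\delta)\right|$, which is exactly the paper's Lemma \ref{lem2d} and is the ingredient missing from your plan. (Your proposed surjection from the unitary group of $M$ onto $\Aut_{\OO}(\Gamma,\delta)$ is also an unproved lifting statement, and in any case it is the stabilizer of the matrix, not of the pairing, that you would be using.)

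Two smaller points. The factors $\prod_{i}(1+(-1)^i p^{-i})$ and $\prod_{i}(1-p^{1-2i})$ enter as the proportion of invertible matrices in $\HH_{n-r}(\F_{p^2})$, resp.\ $\Sym_{n-r}(\F_p)$ (Lemma \ref{lem2e}), coming from the rank condition on $\overline{X_n}$ --- not as orders of finite unitary or orthogonal groups, whose local factors are different --- so "plugging in the order of the Hermitian unitary group over $\kappa$" would give the wrong constant. And to pass from $\mu_n(\Gamma,\delta)$ to (\ref{eq2c}) and (\ref{eq2d}) you must sum over isomorphism classes of perfect pairings and use the orbit--stabilizer identity $\sum_{[\delta]}\left|\Aut_{\OO}(\Gamma,\delta)\right|^{-1}=\left|\Phi_{\Gamma}\right|/\left|\Aut_{\OO}(\Gamma)\right|$ as in (\ref{eq2x1}); these are fixable details, but the counting step above is the part of your proposal that fails as written.
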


Before starting the proof, we provide three lemmas that will be used in the proof. The first one is an analogue of \cite[Lemma 4]{CKLPW15}, whose proof is also similar. For an arbitrary Hermitian pairing $\left [ \; , \; \right ] : \OO^n \times \OO^n \rightarrow K/\OO$, define its cokernel by
$$
\cok \left [ \; , \; \right ] := \OO^n / \left\{ x \in \OO^n : \left [ x, y \right ]=0 \text{ for all } y \in \OO^n \right\}
$$
and denote the canonical perfect Hermitian pairing on $\cok \left [ \; , \; \right ]$ by $\delta_{\left [ \; , \; \right ]}$. Note that $\cok \left [ \; , \; \right ]$ is always a finite $\OO$-module.

\begin{lemma} \label{lem2d}
The number of Hermitian pairings $\left [ \; , \; \right ] : \OO^n \times \OO^n \rightarrow K/\OO$ such that $(\cok \left [ \; , \; \right ], \delta_{\left [ \; , \; \right ]})$ is isomorphic to given $(\Gamma, \delta)$ is
$$
\frac{\left| \Gamma \right|^n \prod_{j=n-r+1}^{n} (1 - \left| \kappa \right|^{-j})}{\left| \Aut_{\OO}(\Gamma, \delta) \right|},
$$
where $r := \dim_{\kappa}(\Gamma / \pi \Gamma)$.
\end{lemma}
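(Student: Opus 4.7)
The plan is to exhibit each Hermitian pairing $\beta : \OO^n \times \OO^n \to K/\OO$ in question as a pullback $\beta_\phi(x,y) := \delta(\phi(x),\phi(y))$ of the given $(\Gamma,\delta)$ along a surjection $\phi : \OO^n \twoheadrightarrow \Gamma$, then count such surjections and divide by the symmetries preserving $\delta$. This is the Hermitian analogue of the orbit-counting argument used for \cite[Lemma 4]{CKLPW15}.

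First, for any surjection $\phi : \OO^n \twoheadrightarrow \Gamma$ the formula $\beta_\phi(x,y) := \delta(\phi(x),\phi(y))$ defines a Hermitian pairing on $\OO^n$. Because $\delta$ is perfect and $\phi$ is surjective, the radical $\{x : \beta_\phi(x,y) = 0 \text{ for all } y\}$ is exactly $\ker\phi$, so $(\cok \beta_\phi, \delta_{\beta_\phi})$ is canonically isomorphic to $(\Gamma, \delta)$. Conversely, given any Hermitian pairing $\beta$ on $\OO^n$ with $(\cok \beta, \delta_{\beta}) \cong (\Gamma, \delta)$, composing the quotient $\OO^n \twoheadrightarrow \cok \beta$ with a pairing-preserving isomorphism $\cok \beta \xrightarrow{\sim} \Gamma$ yields a surjection $\phi$ with $\beta = \beta_\phi$. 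Hence the assignment $\phi \mapsto \beta_\phi$ is surjective onto the set to be counted.

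Next I identify its fibers. If $\beta_\phi = \beta_{\phi'}$ then $\ker \phi = \ker \phi'$, so there is a unique $\psi \in \Aut_{\OO}(\Gamma)$ with $\phi' = \psi \circ \phi$; using the surjectivity of $\phi$, the equality of the two pairings forces $\delta(\psi g, \psi h) = \delta(g,h)$ for all $g, h \in \Gamma$, i.e.\ $\psi \in \Aut_{\OO}(\Gamma, \delta)$. Every fiber therefore has size $\left|\Aut_{\OO}(\Gamma, \delta)\right|$, and the total count equals $\# \Sur_{\OO}(\OO^n, \Gamma) / \left|\Aut_{\OO}(\Gamma, \delta)\right|$.

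Finally I count surjections $\phi : \OO^n \to \Gamma$. By Nakayama's lemma, $\phi$ is surjective if and only if its reduction $\bar\phi : \kappa^n \to \Gamma/\pi\Gamma \cong \kappa^r$ is surjective. The reduction map $\Hom_{\OO}(\OO^n, \Gamma) \to \Hom_{\kappa}(\kappa^n, \kappa^r)$ is surjective with fibers of size $|\pi\Gamma|^n = (|\Gamma|/|\kappa|^r)^n$, and the number of surjective $\kappa$-linear maps $\kappa^n \to \kappa^r$ is $\prod_{i=0}^{r-1}(|\kappa|^n - |\kappa|^i) = |\kappa|^{rn}\prod_{j=n-r+1}^n (1-|\kappa|^{-j})$. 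Multiplying gives $\#\Sur_{\OO}(\OO^n, \Gamma) = |\Gamma|^n \prod_{j=n-r+1}^n (1-|\kappa|^{-j})$, and dividing by $\left|\Aut_{\OO}(\Gamma, \delta)\right|$ produces the claimed formula. The only point that requires genuine care is the identification of the radical of $\beta_\phi$ with $\ker \phi$, where the perfectness of $\delta$ enters essentially; the remaining steps are formal bookkeeping and a standard count of matrices over $\kappa$.
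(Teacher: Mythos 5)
Your proposal is correct and is essentially the argument the paper intends: it defers to the analogue \cite[Lemma 4]{CKLPW15}, whose proof is exactly this pullback/orbit-counting — pairings with cokernel $(\Gamma,\delta)$ correspond to surjections $\OO^n \twoheadrightarrow \Gamma$ up to $\Aut_{\OO}(\Gamma,\delta)$, combined with the standard count $\#\Sur_{\OO}(\OO^n,\Gamma) = \left|\Gamma\right|^n\prod_{j=n-r+1}^{n}(1-\left|\kappa\right|^{-j})$ via Nakayama. Your verification that the radical of $\beta_\phi$ equals $\ker\phi$ (using perfectness and the Hermitian symmetry to identify left and right radicals) is the only point needing care, and you handle it correctly.
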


Denote the set of $n \times n$ symmetric matrices over a commutative ring $R$ by $\Sym_n(R)$. For a matrix $A \in \HH_n(\OO)$, $\overline{A} := A \text{ mod } (\pi)$ is an element of $\HH_n(\F_{p^2})$ (resp. $\Sym_n(\F_{p})$) if $K/\Q_p$ is unramified (resp. ramified).

\begin{lemma} \label{lem2e}
\begin{enumerate} 
    \item (\cite[equation (4)]{GLSV14}) The number of invertible matrices in $\HH_n(\F_{p^2})$ is
\begin{equation} \label{eq2b}
\left| \GL_n(\F_{p^2}) \cap \HH_n(\F_{p^2}) \right| = 
p^{n^2} \prod_{i=1}^{n} (1 + \frac{(-1)^i}{p^i}).
\end{equation}

\item (\cite[Theorem 2]{Mac69}) The number of invertible matrices in $\Sym_n(\F_p)$ is
\begin{equation} \label{eq2a}
\left| \GL_n(\F_p) \cap \Sym_n(\F_p) \right| = 
p^{\frac{n(n+1)}{2}} \prod_{i=1}^{\left \lceil \frac{n}{2} \right \rceil} (1 - \frac{1}{p^{2i-1}}).
\end{equation}
\end{enumerate}
\end{lemma}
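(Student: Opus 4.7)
The plan is to prove both identities by applying orbit--stabilizer to the congruence action of $\GL_n$ on the space of nondegenerate forms; the two parts differ only in how many equivalence classes of such forms exist over the respective fields.

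For part (1), I would first invoke the classification of Hermitian forms over a finite field: any two nondegenerate Hermitian forms on $\F_{p^2}^n$ are equivalent under $\GL_n(\F_{p^2})$ (a Witt-type argument, using that the norm $\F_{p^2}^{\times} \to \F_p^{\times}$ is surjective). Hence $\GL_n(\F_{p^2}) \cap \HH_n(\F_{p^2})$ is a single orbit under the action $M \mapsto g M \sigma(g^t)$, with stabilizer of the identity form equal to the unitary group $U_n(\F_{p^2})$. Combining the standard order formulas
\begin{equation*}
|\GL_n(\F_{p^2})| = p^{n(n-1)} \prod_{i=1}^n (p^{2i}-1), \qquad |U_n(\F_{p^2})| = p^{n(n-1)/2} \prod_{i=1}^n (p^i - (-1)^i),
\end{equation*}
orbit--stabilizer yields $|\GL_n(\F_{p^2}) \cap \HH_n(\F_{p^2})| = p^{n(n-1)/2} \prod_{i=1}^n (p^i + (-1)^i)$, after cancelling $(p^{2i}-1)/(p^i - (-1)^i) = p^i + (-1)^i$. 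Pulling a factor of $p^i$ out of each term then produces the claimed $p^{n^2} \prod_{i=1}^n (1 + (-1)^i/p^i)$.

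For part (2), the strategy is parallel but the orbit space is larger. For odd $p$, nondegenerate symmetric bilinear forms over $\F_p$ split into two classes distinguished by their discriminant modulo squares, with stabilizers $O_n^{\pm}(\F_p)$ in even $n$ (and a single orthogonal group in odd $n$); summing $|\GL_n(\F_p)|/|O|$ over the orbit representatives using the tabulated orders of the orthogonal groups gives the formula. As a uniform alternative that handles all characteristics at once, I would induct on $n$: partition invertible symmetric matrices by the value of the $(1,1)$-entry $a$, use the Schur complement $M' \mapsto M' - a^{-1} v v^t$ to reduce the case $a \neq 0$ to an $\Sym_{n-1}(\F_p)$ problem, and use a hyperbolic $2 \times 2$ block in the case $a = 0$ (which forces the first column to be nonzero) to reduce to an $\Sym_{n-2}(\F_p)$ problem; the two contributions together match the recursion implied by the claimed closed form. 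The same inductive scheme also gives an independent derivation of part (1).

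I expect the main obstacle to be the characteristic-two symmetric case: the classification of nondegenerate symmetric bilinear forms over $\F_2$ is more delicate because alternating forms are simultaneously symmetric, so either the orbit decomposition must be handled with extra care or the inductive hyperbolic splitting must be checked to go through unchanged. Since both identities are classical and fully worked out in the cited sources \cite{GLSV14, Mac69}, the cleanest exposition is to quote them, which is what the lemma does.
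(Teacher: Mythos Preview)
Your proof sketch is correct, but note that the paper offers no proof of its own for this lemma: it simply records the two formulas with citations to \cite{GLSV14} and \cite{Mac69} and moves on. Your orbit--stabilizer argument for part~(1) and the Schur-complement induction for part~(2) are standard and valid derivations of these classical counts, so you are supplying strictly more than the paper does. You yourself acknowledge this in your final sentence, and that is exactly the situation: the paper treats the lemma as a black-box citation, while you outline how one would actually establish it. There is nothing to compare in terms of approach, since the paper has none.
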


The following lemma is a variant of \cite[Lemma 3.2]{BKLPR15}. Since an $n \times n$ matrix over $\OO$ is invertible if and only if its reduction modulo $\pi$ (which is a matrix over $\kappa$) is invertible, the proof is exactly same as the original lemma.

\begin{lemma} \label{lem2f}
Suppose that $A, M \in \HH_n(\OO)$ and $\det M \neq 0$. Then we have $\left< \; , \; \right>_A = \left< \; , \; \right>_M$ if and only if $A \in M + M \HH_n(\OO) M$ and $\rank_{\kappa}(\overline{A}) = \rank_{\kappa}(\overline{M})$.
\end{lemma}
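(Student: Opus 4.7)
The entire argument rests on the identity $A^{-1}-M^{-1}=-A^{-1}(A-M)M^{-1}$, which links the two pairings to the difference $A-M$. The condition $\left<\;,\;\right>_A=\left<\;,\;\right>_M$ on $\OO^n\times\OO^n$ (valued in $K/\OO$) is equivalent to $A^{-1}-M^{-1}\in\M_n(\OO)$, and since the inverse of a Hermitian matrix is Hermitian, this is the same as $A^{-1}-M^{-1}\in\HH_n(\OO)$. The lemma translates this congruence into the conjunction of $A-M\in M\HH_n(\OO)M$ with the rank equality $\rank_\kappa\overline A=\rank_\kappa\overline M$.

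\emph{Forward direction.} Assume $\left<\;,\;\right>_A=\left<\;,\;\right>_M$. The radical $\{x\in\OO^n:\left<x,y\right>_M\equiv 0\text{ for all }y\in\OO^n\}$ equals $M\OO^n$, and likewise for $A$, so $A\OO^n=M\OO^n$ and there is a unique $U\in\GL_n(\OO)$ with $A=MU$; in particular $\cok(A)\cong\cok(M)$, whence $\rank_\kappa\overline A=\rank_\kappa\overline M$. Setting $H:=A^{-1}-M^{-1}\in\HH_n(\OO)$, one finds $U^{-1}=I+HM$, and
\[
M^{-1}(A-M)M^{-1} \;=\; (U-I)M^{-1} \;=\; -(I+HM)^{-1}H.
\]
This expression is Hermitian (using that $A=\sigma(A^t)$ forces $\sigma(U^t)=MUM^{-1}$) and has entries in $\OO$ because $U^{-1}=I+HM\in\GL_n(\OO)$; hence $A-M\in M\HH_n(\OO)M$.

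\emph{Backward direction.} Write $A=M(I+HM)=MB$ with $H\in\HH_n(\OO)$ and $B:=I+HM\in\M_n(\OO)$. The key observation is that $\overline B=I+\overline H\,\overline M$ fixes $\ker\overline M$ pointwise, so $\ker\overline M\subseteq\im\overline B$; applying rank-nullity to $\overline M|_{\im\overline B}$ gives
\[
\rank_\kappa\overline A \;=\; \rank_\kappa\overline B - \dim_\kappa(\ker\overline M\cap\im\overline B) \;=\; \rank_\kappa\overline B - (n-\rank_\kappa\overline M).
\]
The rank hypothesis thus forces $\rank_\kappa\overline B=n$, i.e.\ $\overline B\in\GL_n(\kappa)$, which by the remark preceding the lemma is equivalent to $B\in\GL_n(\OO)$. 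Then
\[
A^{-1}-M^{-1}\;=\;(B^{-1}-I)M^{-1}\;=\;-B^{-1}H\;\in\;\M_n(\OO),
\]
and since $A^{-1}$ and $M^{-1}$ are both Hermitian so is this difference, placing it in $\HH_n(\OO)$. Hence $\left<\;,\;\right>_A=\left<\;,\;\right>_M$.

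\emph{Main obstacle.} The content of the lemma is really in the backward direction: the rank equality is precisely what promotes the formal factorization $A=M(I+HM)$ to invertibility of $B=I+HM$ over $\OO$. The elementary remark $\overline B|_{\ker\overline M}=\id$ makes this promotion transparent; without the rank hypothesis one can have $A\in M+M\HH_n(\OO)M$ while $\overline B$ is singular, in which case $A^{-1}-M^{-1}$ need not be $\OO$-integral and the pairings disagree.
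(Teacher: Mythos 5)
Your proof is correct and follows essentially the same route the paper takes: the paper simply invokes that the argument of \cite[Lemma 3.2]{BKLPR15} carries over verbatim (using that a matrix over $\OO$ is invertible iff its reduction mod $\pi$ is), and your write-up is exactly that argument, translating $\left< \; , \; \right>_A = \left< \; , \; \right>_M$ into $A^{-1}-M^{-1} \in \HH_n(\OO)$ and using a rank count over $\kappa$ to get invertibility of $I+HM$ (resp.\ $B$) over $\OO$. Your rank--nullity phrasing via $\ker \overline{M} \subseteq \im \overline{B}$ is only a cosmetic variant of the standard injectivity argument, so there is nothing substantively different to flag.
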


Now we are ready to prove Theorem \ref{thm2c}, which is the main result of this section. Our proof follows the strategy of \cite[Theorem 2]{CKLPW15}. 

\begin{proof}[Proof of Theorem \ref{thm2c}]
Throughout the proof, we denote a Haar random matrix $X_n \in \HH_n(\OO)$ by $A$ for simplicity. First we prove the case that $K/\Q_p$ is unramified. 

\begin{enumerate}
    \item By Lemma \ref{lem2d}, the number of Hermitian pairings $\left [ \; , \; \right ] : \OO^n \times \OO^n \rightarrow K/\OO$ such that $(\cok \left [ \; , \; \right ], \delta_{\left [ \; , \; \right ]})$ is isomorphic to $(\Gamma, \delta)$ is
    $$
    \frac{\left| \Gamma \right|^n}{\left| \Aut_{\OO}(\Gamma, \delta) \right|} \prod_{j=n-r+1}^{n} (1 - \frac{1}{p^{2j}}).
    $$

    \item Let $\left [ \; , \; \right ] : \OO^n \times \OO^n \rightarrow K/\OO$ be a Hermitian pairing. Choose a matrix $N \in \HH_n(K)$ such that $N_{ij} \in K$ is a lift of $\left [ e_i, e_j \right ] \in K/\OO$. There exists $m \in \Z_{\geq 0}$ such that $p^m N \in \HH_n(\OO)$. By Proposition \ref{prop2a}, there exists $Y \in \GL_n(\OO)$ such that
    $$
    YN\sigma(Y^t) = \diag(u_1'p^{d_1'-m}, \cdots, u_n'p^{d_n'-m}),
    $$
    where $u_i' \in \OO^{\times}$ and $d_i' \in \Z_{\geq 0}$ for each $i$.
    By possibly changing the lift $N_{ij}$ of $\left [ e_i, e_j \right ] \in K/\OO$ for each $1 \leq i \leq j \leq n$, one may assume that $d_i' - m \leq 0$ for each $i$. In this case, $M := (YN\sigma(Y^t))^{-1} \in \HH_n(\OO)$ satisfies $\left [ \; , \; \right ] = \left< \; , \; \right>_M$ and is of the form
    $$
M = \diag (u_1 p^{d_1}, \cdots, u_n p^{d_n}),
$$
    where $u_i \in \OO^{\times}$ and $d_i \in \Z_{\geq 0}$ for each $i$.

    \item Let $M \in \HH_n(\OO)$ be as above. Then we have $\Gamma \cong \cok(M) \cong \prod_{i=1}^{n} \OO / p^{d_i} \OO$. By Lemma \ref{lem2f}, we have $\left< \; , \; \right>_A = \left< \; , \; \right>_M$ if and only if $X := M^{-1}(A-M)M^{-1} \in \HH_n(\OO)$ and $\rank_{\F_{p^2}}(\overline{A}) = \rank_{\F_{p^2}}(\overline{M}) = n-r$. First we compute the probability that $X_{ij} \in \OO$ for every $i \leq j$. 

\begin{itemize}
    \item For $1 \leq i < j \leq n$, we have $X_{ij} = (u_i p^{d_i})^{-1} A_{ij} (u_{j} p^{d_{j}})^{-1}\in \OO$ if and only if $p^{d_i+d_j} \mid A_{ij}$ for a random $A_{ij} \in \OO$. The probability is given by $p^{-2(d_i+d_j)}$.
    
    \item For $1 \leq i \leq n$, we have $X_{ii} = (u_i p^{d_i})^{-1} A_{ii} (u_{i} p^{d_{i}})^{-1}\in \OO$ if and only if $p^{2d_i} \mid A_{ii}$ for a random $A_{ii} \in \Z_p$. The probability is given by $p^{-2d_i}$.
\end{itemize}
By the above computations, the probability that $X \in \HH_n(\OO)$ is given by 
\begin{equation} \label{eq2e1}
\prod_{i<j}p^{-2(d_i+d_j)} \prod_{i} p^{-2d_i} =  \left| \Gamma \right|^{-n}.
\end{equation}
Given the condition $X \in \HH_n(\OO)$, we may assume that $\overline{M}$ is zero outside of its upper left $(n-r) \times (n-r)$ minor by permuting the rows and columns of $M$. In this case, $\rank_{\F_{p^2}}(\overline{A})= n-r$ if and only if the upper left $(n-r) \times (n-r)$ minor of $\overline{A}$, which is random in $\HH_{n-r}(\F_{p^2})$, has a rank $n-r$. Therefore the probability that $\left< \; , \; \right>_A = \left< \; , \; \right>_M$ is given by
\begin{equation*}
\frac{\left| \GL_{n-r}(\F_{p^2}) \cap \HH_{n-r}(\F_{p^2}) \right|}{\left| \HH_{n-r}(\F_{p^2}) \right|} \cdot \left| \Gamma \right|^{-n}
= \prod_{i=1}^{n-r} (1 + \frac{(-1)^i}{p^i})\left| \Gamma \right|^{-n}.
\end{equation*}
(The equality holds due to Lemma \ref{lem2e}.)

    \item Using the above results, we conclude that
    \begin{equation*}
\begin{split}
\mu_n(\Gamma, \delta) 
& = \prod_{i=1}^{n-r} (1 + \frac{(-1)^i}{p^i})\left| \Gamma \right|^{-n} \cdot \frac{\left| \Gamma \right|^n}{\left| \Aut_{\OO}(\Gamma, \delta) \right|} \prod_{j=n-r+1}^{n} (1 - \frac{1}{p^{2j}}) \\
& = \frac{1}{\left| \Aut_{\OO}(\Gamma, \delta) \right|} \prod_{j=n-r+1}^{n}(1-\frac{1}{p^{2j}}) \prod_{i=1}^{n-r} (1 + \frac{(-1)^i}{p^i}).
\end{split}    
\end{equation*}
Let $\Phi_{\Gamma}$ denotes the set of Hermitian perfect pairings on $\Gamma$ and $\overline{\Phi}_{\Gamma}$ be the set of isomorphism classes of elements of $\Phi_{\Gamma}$. The orbit-stabilizer theorem implies that
    \begin{equation} \label{eq2x1}
    \sum_{[\delta] \in \overline{\Phi}_{\Gamma}} \frac{1}{\left| \Aut_{\OO}(\Gamma, \delta) \right|}
    = \sum_{\delta \in \Phi_{\Gamma}} \frac{1}{\left| \Aut_{\OO}(\Gamma) \right|}
    \end{equation}
    (\cite[p. 952]{Woo17})
and this implies that
\begin{equation*}
\lim_{n \rightarrow \infty} \PP(\cok(A) \cong \Gamma) 
= \lim_{n \rightarrow \infty} \sum_{[\delta] \in \overline{\Phi}_{\Gamma}} \mu_n(\Gamma, \delta)
= \frac{\left| \Phi_{\Gamma} \right|}{\left| \Aut_{\OO}(\Gamma) \right|} \prod_{i=1}^{\infty}(1 + \frac{(-1)^i}{p^i}). 
\end{equation*}
\end{enumerate}

\noindent Now assume that $K/\Q_p$ is ramified. 
\begin{enumerate}
    \item By Lemma \ref{lem2d}, the number of Hermitian pairings $\left [ \; , \; \right ] : \OO^n \times \OO^n \rightarrow K/\OO$ such that $(\cok \left [ \; , \; \right ], \delta_{\left [ \; , \; \right ]})$ is isomorphic to $(\Gamma, \delta)$ is
$$
\frac{\left| \Gamma \right|^n}{\left| \Aut_{\OO}(\Gamma, \delta) \right|} \prod_{j=n-r+1}^{n} (1 - \frac{1}{p^j}).
$$

    \item Let $\left [ \; , \; \right ] : \OO^n \times \OO^n \rightarrow K/\OO$ be a Hermitian pairing. Choose a matrix $N \in \HH_n(K)$ such that $N_{ij} \in K$ is a lift of $\left [ e_i, e_j \right ] \in K/\OO$. There exists $m \in \Z_{\geq 0}$ such that $p^m N \in \HH_n(\OO)$. By Proposition \ref{prop2b}, there exists $Y \in \GL_n(\OO)$ such that
    $$
    YN\sigma(Y^t) = \diag (u_1' p^{d_1'-m}, \cdots, u_k' p^{d_k'-m}, p^{-m}B_1', \cdots, p^{-m}B_s') \;\; (k+2s=n),
    $$
    where $u_i'p^{d_i'}$ ($1 \leq i \leq k$) and $B_j'=A(a_j', b_j', c_j')$ ($1 \leq j \leq s$) satisfy the conditions appear in Proposition \ref{prop2b}. 
    By possibly changing the lift $N_{ij}$ of $\left [ e_i, e_j \right ] \in K/\OO$ for each $1 \leq i \leq j \leq n$, one may assume that $d_i' - m \leq 0$ for each $i$ and $(p^{-m}B_j')^{-1} =p^m A(b_j', a_j', -c_j') \in \HH_2(\OO)$ for each $j$. In this case, $M := (YN\sigma(Y^t))^{-1} \in \HH_n(\OO)$ satisfies $\left [ \; , \; \right ] = \left< \; , \; \right>_M$ and is of the form
    $$
M = \diag (u_1 p^{d_1}, \cdots, u_k p^{d_k}, B_1, \cdots, B_s) \;\; (k+2s=n),
$$
    where $u_ip^{d_i} \in \Z_p$ ($1 \leq i \leq k$) and $B_j = A(a_j, b_j, c_j) \in \HH_2(\OO)$ ($1 \leq j \leq s$) satisfy the conditions appear in Proposition \ref{prop2b}.

    \item Let $M \in \HH_n(\OO)$ be as above. The conditions $c_j \neq 0$, $\displaystyle \frac{a_j}{c_j} \in \OO$ and $\displaystyle \frac{b_j}{\pi c_j} \in \OO$ imply that
$$
\cok (B_j) \cong \cok \begin{pmatrix}
a_j & c_j \\
\sigma(c_j)-a_jb_jc_j^{-1} & 0 \\
\end{pmatrix}
\cong \cok \begin{pmatrix}
0 & c_j \\
\sigma(c_j)-a_jb_jc_j^{-1} & 0 \\
\end{pmatrix}
\cong (\OO/c_j\OO)^2
$$
so we have
$$
\Gamma \cong \cok(M) \cong \prod_{i=1}^{k} \OO / \pi^{2d_i} \OO \times \prod_{j=1}^{s} (\OO / \pi^{e_j} \OO)^2
$$
for $\pi^{e_j} \parallel c_j$. By Lemma \ref{lem2f}, we have $\left< \; , \; \right>_A = \left< \; , \; \right>_M$ if and only if $X := M^{-1}(A-M)M^{-1} \in \HH_n(\OO)$ and $\rank_{\F_p}(\overline{A}) = \rank_{\F_p}(\overline{M}) = n-r$. First we compute the probability that $X_{ij} \in \OO$ for every $i \leq j$. To simplify the proof, we will introduce some notations. For a positive integer $x$, denote $\underline{x} := x+k$. For $1 \leq i \leq k$ and $1 \leq j \leq s$, denote $\YY_{ij} := \begin{pmatrix}
X_{i, \, \underline{2j-1}} & X_{i, \, \underline{2j}} \\
\end{pmatrix}$. For $1 \leq j \leq j' \leq s$, denote $\ZZ_{j j'} := \begin{pmatrix}
X_{\underline{2j-1}, \, \underline{2j'-1}} & X_{\underline{2j-1}, \, \underline{2j'}} \\
X_{\underline{2j}, \, \underline{2j'-1}} & X_{\underline{2j}, \, \underline{2j'}} \\
\end{pmatrix}$.

\begin{itemize}
    \item For $1 \leq i < i' \leq k$, we have $X_{ii'} = (u_i p^{d_i})^{-1} A_{ii'} (u_{i'} p^{d_{i'}})^{-1}\in \OO$ if and only if $\pi^{2(d_i+d_{i'})} \mid A_{ii'}$ for a random $A_{ii'} \in \OO$. The probability is given by $p^{-2(d_i+d_{i'})}$.
    
    \item For $1 \leq i \leq k$, we have $X_{ii} = (u_i p^{d_i})^{-1}(A_{ii} - u_i p^{d_i})(u_i p^{d_i})^{-1} \in \OO$ if and only if $p^{2d_i} \mid A_{ii} - u_ip^{d_i}$ for a random $A_{ii} \in \Z_p$. The probability is given by $p^{-2d_i}$.
    
    \item For $1 \leq i \leq k$ and $1 \leq j \leq s$, denote $(a_j, b_j, c_j)=(a,b,c)$ for simplicity. Then we have 
\begin{equation*}
\begin{split}
& \YY_{ij} = (u_i p^{d_i})^{-1} \begin{pmatrix}
A_{i, \, \underline{2j-1}} = x & A_{i, \, \underline{2j}} = y \\
\end{pmatrix} B_j^{-1} \in \M_{1 \times 2}(\OO) \\
\Leftrightarrow \; & -\frac{b}{\sigma(c)} x + y, \, x + -\frac{a}{c} y \in \pi^{2d_i+e_j} \OO \\
\Leftrightarrow \; & x, y \in \pi^{2d_i+e_j} \OO.
\end{split}    
\end{equation*}
The probability is given by $p^{-2(2d_i+e_j)}$.
    
    \item For $1 \leq j < j' \leq s$, denote $(a_j, b_j, c_j)=(a,b,c)$ and $(a_{j'}, b_{j'}, c_{j'})=(a',b',c')$ for simplicity. Then we have 
\begin{equation*}
\begin{split}
& \ZZ_{jj'} = B_j^{-1} \begin{pmatrix}
A_{\underline{2j-1}, \, \underline{2j'-1}} = x & A_{\underline{2j-1}, \, \underline{2j'}} = y \\
A_{\underline{2j}, \, \underline{2j'-1}} = z & A_{\underline{2j}, \, \underline{2j'}} = w \\
\end{pmatrix} B_{j'}^{-1} \in \M_{2}(\OO) \\
\Leftrightarrow \; & \begin{pmatrix}
bb' & -b \sigma(c') & -cb' & c \sigma(c') \\
-bc' & ba' & cc' & -ca' \\
- \sigma(c) b' & \sigma(c) \sigma(c') & ab' & -a \sigma(c') \\
\sigma(c) c' & -\sigma(c) a' & -ac' & aa' \\
\end{pmatrix} \begin{pmatrix}
x \\
y \\
z \\
w\end{pmatrix} \in \pi^{2(e_j+e_{j'})} \M_{4 \times 1}(\OO) \\
\Leftrightarrow \;\, & T \begin{pmatrix}
x \\
y \\
z \\
w\end{pmatrix} := \begin{pmatrix}
\frac{bb'}{c \sigma(c')} & -\frac{b}{c} & -\frac{b'}{\sigma(c')} & 1 \\
-\frac{b}{c} & \frac{ba'}{cc'} & 1 & -\frac{a'}{c'}  \\
-\frac{b'}{\sigma(c')} & 1 & \frac{ab'}{\sigma(c) \sigma(c')} & -\frac{a}{\sigma(c)} \\
1 & -\frac{a'}{c'} & -\frac{a}{\sigma(c)} & \frac{aa'}{\sigma(c) c'} \\
\end{pmatrix} \begin{pmatrix}
x \\
y \\
z \\
w\end{pmatrix} \in \pi^{e_j+e_{j'}} \M_{4 \times 1}(\OO).
\end{split}
\end{equation*}
Since the matrix $T \in \M_4(\OO)$ is invertible, the probability is given by $p^{-4(e_j+e_{j'})}$.
    
    \item For $1 \leq j \leq s$, denote $(a_j, b_j, c_j)=(a,b,c)$ for simplicity. Then we have 
    \begin{equation*}
\begin{split}
& \ZZ_{jj} = B_j^{-1} \begin{pmatrix}
A_{\underline{2j-1}, \, \underline{2j-1}} = x & A_{\underline{2j-1}, \, \underline{2j'}} = y \\
A_{\underline{2j}, \, \underline{2j-1}} = \sigma(y) & A_{\underline{2j}, \, \underline{2j'}} = z \\
\end{pmatrix} B_{j}^{-1} \in \M_{2}(\OO) \\
\Leftrightarrow \; & \begin{pmatrix}
\frac{b^2}{c \sigma(c)} & -\frac{b}{c} & -\frac{b}{\sigma(c)} & 1 \\
-\frac{b}{c} & \frac{ba}{c^2} & 1 & -\frac{a}{c}  \\
-\frac{b}{\sigma(c)} & 1 & \frac{ab}{\sigma(c)^2} & -\frac{a}{\sigma(c)} \\
1 & -\frac{a}{c} & -\frac{a}{\sigma(c)} & \frac{a^2}{\sigma(c) c} \\
\end{pmatrix} \begin{pmatrix}
x \\
y \\
\sigma(y) \\
z \end{pmatrix} \in \pi^{2e_j} \M_{4 \times 1}(\OO) \\
\Leftrightarrow \;\, & x, z \in p^{e_j} \Z_p, \, y \in \pi^{2e_j} \OO
\end{split}
\end{equation*}
for a random $x, z \in \Z_p$ and $y \in \OO$. The probability is given by $p^{-4e_j}$. 
\end{itemize}
By the above computations, the probability that $X \in \HH_n(\OO)$ is given by
\begin{equation} \label{eq2e}
\prod_{i<i'}p^{-2(d_i+d_{i'})} \prod_{i} p^{-2d_i} \prod_{i, j} p^{-2(2d_i+e_j)} \prod_{j<j'} p^{-4(e_j+e_{j'})} \prod_{j} p^{-4e_j} = \left| \Gamma \right|^{-n}.
\end{equation}
Given the condition $X \in \HH_n(\OO)$, we may assume that $\overline{M}$ is zero outside of its upper left $(n-r) \times (n-r)$ minor by permuting the rows and columns of $M$. In this case, $\rank_{\F_p}(\overline{A})= n-r$ if and only if the upper left $(n-r) \times (n-r)$ minor of $\overline{A}$, which is random in $\Sym_{n-r}(\F_p)$, has a rank $n-r$. Therefore the probability that $\left< \; , \; \right>_A = \left< \; , \; \right>_M$ is given by
\begin{equation*}
\frac{\left| \GL_{n-r}(\F_p) \cap \Sym_{n-r}(\F_p) \right|}{\left| \Sym_{n-r}(\F_p) \right|} \cdot \left| \Gamma \right|^{-n}
= \prod_{i=1}^{\left \lceil \frac{n-r}{2} \right \rceil} (1 - \frac{1}{p^{2i-1}}) \left| \Gamma \right|^{-n}.
\end{equation*}
(The equality holds due to Lemma \ref{lem2e}.)

    \item Using the above results, we conclude that
    \begin{equation*}
\begin{split}
\mu_n(\Gamma, \delta) & = \prod_{i=1}^{\left \lceil \frac{n-r}{2} \right \rceil} (1 - \frac{1}{p^{2i-1}}) \left| \Gamma \right|^{-n} \cdot \frac{\left| \Gamma \right|^n}{\left| \Aut_{\OO}(\Gamma, \delta) \right|} \prod_{j=n-r+1}^{n} (1 - \frac{1}{p^j}) \\
& = \frac{1}{\left| \Aut_{\OO}(\Gamma, \delta) \right|} \prod_{j=n-r+1}^{n}(1-\frac{1}{p^j}) \prod_{i=1}^{\left \lceil \frac{n-r}{2} \right \rceil}(1 - \frac{1}{p^{2i-1}})
\end{split}    
\end{equation*}
    and the equation (\ref{eq2x1}) implies that
\begin{equation*}
\lim_{n \rightarrow \infty} \PP(\cok(A) \cong \Gamma) 
= \lim_{n \rightarrow \infty} \sum_{[\delta] \in \overline{\Phi}_{\Gamma}} \mu_n(\Gamma, \delta)
= \frac{\left| \Phi_{\Gamma} \right|}{\left| \Aut_{\OO}(\Gamma) \right|} \prod_{i=1}^{\infty}(1 - \frac{1}{p^{2i-1}}). \qedhere
\end{equation*}
\end{enumerate}
\end{proof}

For a partition $\lambda = (\lambda_1 \geq \cdots \geq \lambda_r)$ ($\lambda_r \geq 1$), an $\OO$-module of \textit{type} $\lambda$ is defined by $\prod_{i=1}^{r} \OO / \pi^{\lambda_i} \OO$. 
The moments of the cokernel of a Haar random matrix $X_n \in \HH_n(\OO)$ are given as follow. The next theorem is a special case of Theorem \ref{thm1i}.

\begin{theorem} \label{thm2g}
Let $G = G_{\lambda}$ be a finite $\OO$-module of type $\lambda = (\lambda_1 \geq \cdots \geq \lambda_r)$. 
\begin{enumerate}
    \item If $K/\Q_p$ is unramified, then
    \begin{equation} \label{eq2f}
    \lim_{n \rightarrow \infty} \EE(\# \Sur_{\OO}(\cok(X_n), G)) = p^{\sum_{i=1}^{r} (2i-1)\lambda_i}.
    \end{equation}
    
    \item If $K/\Q_p$ is ramified, then 
    \begin{equation} \label{eq2g}
    \lim_{n \rightarrow \infty} \EE(\# \Sur_{\OO}(\cok(X_n), G)) = p^{\sum_{i=1}^{r}\left ( (i-1)\lambda_i + \left \lfloor \frac{\lambda_i}{2}\right \rfloor \right )}.
    \end{equation}
\end{enumerate}
\end{theorem}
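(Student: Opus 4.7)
The plan is to derive Theorem \ref{thm2g} directly from Theorem \ref{thm2c} via the standard surjection-counting identity
\begin{equation*}
\EE(\# \Sur_\OO(\cok(X_n), G)) = \sum_{F \in \Sur_\OO(\OO^n, G)} \PP(F \circ X_n = 0),
\end{equation*}
which follows from the canonical bijection between $\Hom_\OO(\cok(X_n), G)$ and the set of $F \in \Hom_\OO(\OO^n, G)$ with $F \circ X_n = 0$, restricted to surjections.

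For a fixed $F \in \Sur_\OO(\OO^n, G)$, write $g_j := F(e_j) \in G$. The condition $F \circ X_n = 0$ becomes the system
\begin{equation*}
g_j (X_n)_{jj} + \sum_{i < j} g_i (X_n)_{ij} + \sum_{k > j} g_k\, \sigma((X_n)_{jk}) = 0 \text{ in } G, \qquad 1 \leq j \leq n,
\end{equation*}
whose variables are the Haar-distributed upper-triangular entries, with $(X_n)_{ii} \in \Z_p$ and $(X_n)_{ij} \in \OO$ for $i < j$. The natural strategy is to process these equations one by one: after choosing generators adapted to the decomposition $G = \prod_{i=1}^{r} \OO/\pi^{\lambda_i}\OO$, solve for each off-diagonal variable using an equation whose coefficient is a unit in the appropriate quotient, and track how each equation splits under $\OO = \Z_p \oplus w\Z_p$ (unramified) or $\OO = \Z_p \oplus \pi\Z_p$ (ramified). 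Summing over $F$ and taking $n \to \infty$ should yield the stated closed forms.

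Equivalently, the moments can be read off directly from Theorem \ref{thm2c}. Writing
\begin{equation*}
\EE(\# \Sur_\OO(\cok(X_n), G)) = \sum_{\Gamma} \PP(\cok(X_n) \cong \Gamma) \cdot \# \Sur_\OO(\Gamma, G),
\end{equation*}
one substitutes the formula for $\mu_n(\Gamma, \delta)$ and applies the orbit-stabilizer identity \eqref{eq2x1}, reducing the problem to a mass formula over $\OO$-modules weighted by perfect Hermitian pairings; evaluating this mass formula should produce the required exponents.

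The main obstacle is controlling the interplay between the linear and $\sigma$-conjugate appearances of each off-diagonal variable: $(X_n)_{ij}$ with $i < j$ appears linearly in equation $j$ (with coefficient $g_i$) and conjugate-linearly in equation $i$ (with coefficient $g_j$), a genuine complication absent from the symmetric case of \cite{CKLPW15}. In the ramified case the decomposition of $\OO$-entries into their $\Z_p$-real and $\Z_p$-imaginary parts interacts asymmetrically with $\sigma(\pi) = \pm \pi$ or $\sigma(\pi) = 2 - \pi$ modulo $\pi^{\lambda_i}$, and this asymmetry is exactly what produces the floor term $\lfloor \lambda_i / 2 \rfloor$ in the ramified exponent. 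Since Theorem \ref{thm2g} is explicitly a special case of Theorem \ref{thm1i} (Haar measure is manifestly $\varepsilon$-balanced for $\varepsilon < 1 - 1/p$), the cleanest resolution within the paper is to defer the proof to the treatment of moments in Sections \ref{Sec4}--\ref{Sec5}.
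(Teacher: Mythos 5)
Your starting identity $\EE(\#\Sur_{\OO}(\cok(X_n),G))=\sum_{F\in\Sur_{\OO}(\OO^n,G)}\PP(FX_n=0)$ is the same one the paper uses, but none of your three routes is actually carried out, and the step that makes the first route work is missing. The paper (following \cite[Theorem 11]{CKLPW15}) does not solve the system $FX_n=0$ equation by equation for a general $F$: it first uses that any two surjections $\OO^n\to G$ differ by an element of $\GL_n(\OO)$ and that the congruence action $X\mapsto YX\sigma(Y^t)$ preserves the Haar measure on $\HH_n(\OO)$, so $\PP(FX_n=0)$ is the same for every surjection and may be computed for the standard one. For that $F$ the event $FX_n=0$ becomes the \emph{independent} divisibility conditions $A_{ij}\in\pi^{e_i}\OO$ for $i\le j$ (with $e_1=\lambda_1,\dots,e_r=\lambda_r$ and $e_i=0$ for $i>r$), whose probabilities are immediate: $p^{-2e_i}$ (unramified) or $p^{-e_i}$ (ramified) off the diagonal, and on the diagonal --- because $A_{ii}\in\Z_p$ --- $p^{-e_i}$ or $p^{-\lceil e_i/2\rceil}$, which is exactly where the floor term in \eqref{eq2g} comes from; multiplying and using that a uniform element of $\Hom_{\OO}(\OO^n,G)$ is surjective with probability tending to $1$ gives the exponents. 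Your substitute strategy, ``solve for each off-diagonal variable using an equation whose coefficient is a unit,'' stalls precisely at the obstacle you name yourself (each $(X_n)_{ij}$ appears linearly in equation $j$ and conjugate-linearly in equation $i$), and you never resolve it; ``should yield the stated closed forms'' is not a computation.

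The two fallback routes are likewise not proofs. Reading the moments off Theorem \ref{thm2c} requires both justifying the exchange of $\lim_{n\to\infty}$ with the infinite sum over isomorphism classes $\Gamma$ and actually evaluating a Hermitian mass formula of the shape $\sum_{(\Gamma,\delta)}\#\Sur_{\OO}(\Gamma,G)/\left|\Aut_{\OO}(\Gamma,\delta)\right|$; that evaluation is at least as hard as the statement itself and is not attempted. Deferring to Theorem \ref{thm1i} is logically admissible --- Haar measure is $\varepsilon$-balanced, and the proofs of Theorems \ref{thm4l} and \ref{thm5h} nowhere use Theorem \ref{thm2g}, so there is no circularity (the paper itself remarks that Theorem \ref{thm2g} is a special case of Theorem \ref{thm1i}) --- but it replaces a short direct Haar-measure computation by the full universality machinery of Sections \ref{Sec4}--\ref{Sec5}, and within your write-up it amounts to citing the result rather than proving it. To repair the proposal, insert the $\GL_n(\OO)$-invariance reduction to the standard surjection and then perform the entrywise probability computation sketched above.
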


\begin{proof}
For $n \geq r$, denote $A = X_n$ for simplicity. 
Assume that $\det (A) \neq 0$ (so $\cok(A)$ is finite), which holds with probability $1$. Following the proof of \cite[Theorem 11]{CKLPW15}, the problem reduces to the computation of the probability that $A\OO^n \in D\OO^n$ where
$$
D := \diag(\pi^{e_1}, \cdots, \pi^{e_n}) \;\; (e_1 = \lambda_1, \cdots, e_r = \lambda_r, \, e_{r+1} = \cdots = e_n = 0).
$$
The condition $A\OO^n \in D\OO^n$ holds if and only if $A_{ij} \in \pi^{e_i}\OO$ for every $i \leq j$. 
\begin{itemize}
    \item For $i<j$, $A_{ij}$ is a random element of $\OO$ so 
    $$
    \PP(A_{ij} \in \pi^{e_i}\OO) = \left\{\begin{matrix}
p^{-2e_i} & (K/\Q_p \text{ is unramified}) \\ 
p^{-e_i} & (K/\Q_p \text{ is ramified})
\end{matrix}\right. .
    $$
    
    \item For $i=j$, $A_{ii}$ is a random element of $\Z_p$. If $K/\Q_p$ is unramified, then $A_{ii} \in \pi^{e_i}\OO$ if and only if $A_{ii} \in p^{e_i}\Z_p$. If $K/\Q_p$ is ramified, then $A_{ii} \in \pi^{e_i}\OO$ if and only if $A_{ii} \in p^{\left \lceil \frac{e_i}{2} \right \rceil}\Z_p$. Thus
    $$
    \PP(A_{ii} \in \pi^{e_i}\OO) = \left\{\begin{matrix}
p^{-e_i} & (K/\Q_p \text{ is unramified}) \\ 
p^{-\left \lceil \frac{e_i}{2} \right \rceil} & (K/\Q_p \text{ is ramified})
\end{matrix}\right. .
    $$
\end{itemize}
Since the probability that a random uniform element of $\Hom_{\OO}(\OO^n, G)$ is a surjection goes to $1$ as $n \rightarrow \infty$, we have
\begin{equation*}
\begin{split}
& \lim_{n \rightarrow \infty} \EE(\# \Sur_{\OO}(\cok(X_n), G)) \\
= & \lim_{n \rightarrow \infty} \left| \Sur_{\OO}(\OO^n, G) \right| \prod_{1 \leq i < j \leq n} \PP(A_{ij} \in \pi^{e_i}\OO) \prod_{1 \leq i \leq n} \PP(A_{ii} \in \pi^{e_i}\OO) \\
= & \left\{\begin{matrix}
p^{\sum_{i=1}^{r} (2i-1)\lambda_i} & (K/\Q_p \text{ is unramified}) \\
p^{\sum_{i=1}^{r}\left ( (i-1)\lambda_i + \left \lfloor \frac{\lambda_i}{2}\right \rfloor \right )} & (K/\Q_p \text{ is ramified})
\end{matrix}\right. . \qedhere
\end{split}
\end{equation*}
\end{proof}

Let $\lambda$ be a partition and $\lambda'$ be its transpose partition. Then we have $\sum_{i} (2i-1) \lambda_i = \sum_{j} \lambda_j'^2$. (This follows from the formula $\sum_{i} (i-1) \lambda_i = \sum_{j} \frac{\lambda_j'^2-\lambda_j'}{2}$ which appears in \cite[p. 717]{CKLPW15}.)
By Theorem \ref{thm2g}, the $n \rightarrow \infty$ limit of the moment $\EE(\# \Sur_{\OO}(\cok(X_n), G))$ for a finite $\OO$-module $G$ of type $\lambda$ is bounded above by $\left| \kappa \right|^{\sum_{j=1}^{\lambda_1}\frac{\lambda_j'^2}{2}}$ in both unramified and ramified cases. In the next section, we will prove that these moments determine the distribution.

\section{Moments} \label{Sec3}

Unlike the equidistributed case, it seems almost impossible to compute the distribution of the cokernel of an arbitrary $\varepsilon$-balanced matrix directly. 
The use of moments enables us to compute the distributions of random groups (in our case, random $\OO$-modules) without direct computation.
Wood \cite{Woo17, Woo19} and Nguyen-Wood \cite{NW22} proved Theorem \ref{thm1b} by computing the moments of the cokernels of $\varepsilon$-balanced matrices and showing that these moments determine the distribution.
In the remaining part of the paper, we follow the same strategy. 

In this section, we prove that the distribution of the cokernel of a Haar random Hermitian matrix over $\OO$ is uniquely determined by its moments that appear in Theorem \ref{thm2g}. Since the ring $\OO$ is a PID, the structure theorem for finitely generated modules over a PID implies that the partitions classify finite $\OO$-modules. 
Our arguments are largely based on \cite[Section 7--8]{Woo17}, so some details will be omitted.

For a partition $\lambda$, denote the finite $\OO$-module of type $\lambda$ by $G_{\lambda}$ and let $m(G_{\lambda}) := \left| \kappa \right|^{\sum_{j=1}^{\lambda_1} \frac{\lambda_j'^2}{2}}$. For partitions $\mu \leq \lambda$, let $G_{\mu, \lambda}$ be the set of $\OO$-submodules of $G_{\lambda}$ of type $\mu$. The following lemma is an analogue of \cite[Lemma 7.5]{Woo17}. 

\begin{lemma} \label{lem3a}
We have $\sum_{G \leq G_{\lambda}} m(G) \leq F^{\lambda_1} m(G_{\lambda})$ for $F := \prod_{i=1}^{\infty} (1 - 2^{-i})^{-1} \cdot \sum_{d=0}^{\infty} 2^{-\frac{d^2}{2}} > 0$.
\end{lemma}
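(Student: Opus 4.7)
The plan is to decompose the sum by isomorphism type of the submodule:
\begin{equation*}
\sum_{G \leq G_\lambda} m(G) \;=\; \sum_{\mu \leq \lambda} |G_{\mu, \lambda}| \, m(G_\mu),
\end{equation*}
where the outer sum runs over partitions $\mu$ with $\mu_j' \leq \lambda_j'$ for every $j$. Set $q := |\kappa| \geq 2$. The strategy is to bound $|G_{\mu, \lambda}|$ in a form that, when combined with the ratio $m(G_\mu)/m(G_\lambda)$, factors columnwise through the conjugate-partition indices $j = 1, \ldots, \lambda_1$.

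By the classical Birkhoff--Hall enumeration of submodules of a finite module over a discrete valuation ring, together with the standard bound $\binom{a}{b}_q \leq q^{b(a-b)} \prod_{i=1}^{\infty}(1 - q^{-i})^{-1}$ on Gaussian binomials, a short calculation yields
\begin{equation*}
|G_{\mu, \lambda}| \;\leq\; \prod_{j=1}^{\lambda_1} q^{\mu_j'(\lambda_j' - \mu_j')} \cdot C^{\lambda_1}, \qquad C := \prod_{i=1}^{\infty} (1 - q^{-i})^{-1}.
\end{equation*}
On the other hand, the definition of $m$ gives $m(G_\mu)/m(G_\lambda) = q^{-\sum_j ((\lambda_j')^2 - (\mu_j')^2)/2}$, and the elementary identity
\begin{equation*}
\mu_j'(\lambda_j' - \mu_j') - \frac{(\lambda_j')^2 - (\mu_j')^2}{2} = -\frac{(\lambda_j' - \mu_j')^2}{2}
\end{equation*}
holds for each $j$, so the product of the two contributions collapses to $\prod_j q^{-(\lambda_j' - \mu_j')^2/2}$.

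Now I relax the partition condition on $\mu$ by allowing each $\mu_j' \in \{0, 1, \ldots, \lambda_j'\}$ independently; this only enlarges the sum, which then factors columnwise:
\begin{equation*}
\sum_{G \leq G_\lambda} m(G) \;\leq\; m(G_\lambda) \cdot C^{\lambda_1} \prod_{j=1}^{\lambda_1} \sum_{d=0}^{\lambda_j'} q^{-d^2/2}
\end{equation*}
after the substitution $d = \lambda_j' - \mu_j'$. Since $q \geq 2$, one has $C \leq \prod_{i=1}^{\infty}(1 - 2^{-i})^{-1}$ and $\sum_{d=0}^{\lambda_j'} q^{-d^2/2} \leq \sum_{d=0}^{\infty} 2^{-d^2/2}$, so each of the $\lambda_1$ columns contributes at most $F$, giving the claimed bound $F^{\lambda_1} m(G_\lambda)$.

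The only delicate point is the upper bound on $|G_{\mu,\lambda}|$ with exponents combining correctly across $j$; this is essentially the content of \cite[Lemma 7.5]{Woo17}, and the argument adapts to the present setting without change because $\OO$ is a discrete valuation ring and only the residue-field inequality $q \geq 2$ is used in the final estimate.
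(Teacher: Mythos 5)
Your proof is correct and follows essentially the same route as the paper: the decomposition $\sum_{\mu\leq\lambda}|G_{\mu,\lambda}|\,m(G_\mu)$, the columnwise bound $|G_{\mu,\lambda}|\leq C^{\lambda_1}\prod_j |\kappa|^{\mu_j'(\lambda_j'-\mu_j')}$ (the paper gets this from the exact formula in Honold--Landjev rather than from Gaussian-binomial estimates, but the bound is identical), the completion of the square to $-\tfrac{(\lambda_j'-\mu_j')^2}{2}$, and the relaxation to independent column sums with $|\kappa|\geq 2$. No gaps.
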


\begin{proof}
Let $C = \prod_{i=1}^{\infty} (1 - 2^{-i})$ and 
$D = \sum_{d=0}^{\infty} 2^{-\frac{d^2}{2}}$, so that $F = C^{-1}D$. Then we have
\begin{equation} \label{eq3a}
\begin{split}
\left| G_{\mu, \lambda} \right|
& = \prod_{j \geq 1} \left ( \left| \kappa \right|^{\mu_j'\lambda_j' - \mu_j'^2} \prod_{k=1}^{\mu_j'-\mu_{j+1}'} \frac{1-\left| \kappa \right|^{-\lambda_j'+\mu_j'-k}}{1-\left| \kappa \right|^{-k}}\right ) \\
& \leq \left| \kappa \right|^{\sum_{j=1}^{\lambda_1} (\mu_j'\lambda_j' - \mu_j'^2)} \prod_{j=1}^{\lambda_1} \left ( \prod_{k=1}^{\infty} \frac{1}{1-\left| \kappa \right|^{-k}} \right ) \\
& \leq \frac{1}{C^{\lambda_1}} \left| \kappa \right|^{\sum_{j=1}^{\lambda_1} (\mu_j'\lambda_j' - \mu_j'^2)}
\end{split}    
\end{equation}
by \cite[Theorem 2.4]{HL00}. Now the equation (\ref{eq3a}) implies that
\begin{equation*}
\begin{split}
\sum_{G \leq G_{\lambda}} m(G) 
& = \sum_{\mu \leq \lambda} \left| G_{\mu, \lambda} \right| m(G_{\mu}) \\
& \leq \frac{1}{C^{\lambda_1}} \sum_{\mu \leq \lambda} \left| \kappa \right|^{\sum_{j=1}^{\lambda_1} (\mu_j'\lambda_j' - \mu_j'^2 + \frac{\mu_j'^2}{2})} \\
& = \frac{\left| \kappa \right|^{\sum_{j=1}^{\lambda_1} \frac{\lambda_j'^2}{2}}}{C^{\lambda_1}} \sum_{\mu \leq \lambda} \left| \kappa \right|^{\sum_{j=1}^{\lambda_1} - \frac{(\lambda_j' - \mu_j')^2}{2}} \\
& \leq \frac{\left| \kappa \right|^{\sum_{j=1}^{\lambda_1} \frac{\lambda_j'^2}{2}}}{C^{\lambda_1}} \sum_{d_1, \cdots, d_{\lambda_1} \geq 0} \left| \kappa \right|^{-\sum_{j=1}^{\lambda_1} \frac{d_j^2}{2}} \\
& \leq F^{\lambda_1} m(G_{\lambda}). \qedhere
\end{split}    
\end{equation*}
\end{proof}

\begin{lemma} \label{lem3b}
Let $m$ be a positive integer, $M$ be the set of partitions with at most $m$ parts and $t>1$ and $b<1$ be real numbers. For each $\mu \in M$, let $x_{\mu}$ and $y_{\mu}$ be non-negative real numbers. Suppose that for all $\lambda \in M$, 
$$
\sum_{\mu \in M} x_{\mu} t^{\sum_{i} \lambda_i \mu_i}
= \sum_{\mu \in M} y_{\mu} t^{\sum_{i} \lambda_i \mu_i}
= O(F^m t^{\sum_{i} \frac{\lambda_i^2 + b \lambda_i}{2}})
$$
for an absolute constant $F>0$. Then we have $x_{\mu} = y_{\mu}$ for all $\mu \in M$.
\end{lemma}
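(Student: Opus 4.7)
The plan is to set $z_\mu := x_\mu - y_\mu$ and prove $z_\mu = 0$ for every $\mu \in M$. The hypothesis gives simultaneously the vanishing of the signed moments
$$\sum_{\mu \in M} z_\mu t^{\sum_i \lambda_i \mu_i} = 0 \qquad (\forall \lambda \in M),$$
and, from $|z_\mu| \leq x_\mu + y_\mu$ together with the assumed upper bound on each of the two sums separately, the absolute moment bound
$$\sum_{\mu \in M} |z_\mu| t^{\sum_i \lambda_i \mu_i} = O\bigl(F^m t^{\sum_i (\lambda_i^2 + b\lambda_i)/2}\bigr).$$

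First I would specialize $\lambda = \mu$ in the absolute moment bound and lower-bound the full sum by its single $\mu$-summand to obtain the pointwise decay
$$|z_\mu| \leq K \cdot t^{(b|\mu| - \|\mu\|^2)/2}, \qquad |\mu| := \sum_i \mu_i,\ \|\mu\|^2 := \sum_i \mu_i^2,$$
for an absolute constant $K$. The hypothesis $b < 1$ makes this super-exponential in $\|\mu\|$, so the tail $\sum_{\nu : \nu_1 > N} |z_\nu| t^{\sum_i \lambda_i \nu_i}$ can be controlled in terms of $N$ and $\lambda_1$.

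Next, I would isolate each $z_{\mu^*}$ by a finite Vandermonde-type inversion. For $N \geq \mu_1^*$ let $M_N := \{\nu \in M : \nu_1 \leq N\}$; this is a finite set, and the matrix $V^{(N)} := (t^{\sum_i \lambda_i \nu_i})_{\lambda, \nu \in M_N}$ is a generalized multivariate Vandermonde in the distinct tuples $(t^{\nu_1}, \ldots, t^{\nu_m})$, invertible by an induction on $m$ that reduces to the one-dimensional Vandermonde at geometric nodes (pinning down contributions with $\nu_1$ fixed first, then $\nu_2$ given $\nu_1$, and so on). Letting $\{\beta_\lambda^{(N)}\}_{\lambda \in M_N}$ denote the $\mu^*$-th row of $(V^{(N)})^{-1}$, the $\beta^{(N)}$-weighted combination of the vanishing signed moments collapses to
$$z_{\mu^*} = -\sum_{\nu \in M \setminus M_N} z_\nu \sum_{\lambda \in M_N} \beta_\lambda^{(N)} t^{\sum_i \lambda_i \nu_i},$$
so it remains to show the right-hand side tends to $0$ as $N \to \infty$.

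The main obstacle is quantifying this limit: the super-exponential decay of $|z_\nu|$ from the first step must dominate the growth of $\max_\lambda |\beta_\lambda^{(N)}|$, which is essentially the Lebesgue constant of interpolation at the geometric nodes $(t^{\nu_1}, \ldots, t^{\nu_m})_{\nu \in M_N}$. The strict inequality $b < 1$ is exactly what makes this possible. I note for contrast that a naive iteration of the pointwise bound by plugging $\lambda = \mu^*$ into the vanishing signed moment stabilizes at decay exponent $\tfrac{1}{2}$ and yields no genuine improvement, so the cancellation coming from the Vandermonde inversion is indispensable and must be combined with the Step 1 decay in a quantitative way to close the argument.
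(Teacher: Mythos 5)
Your reduction to $z_\mu:=x_\mu-y_\mu$, the diagonal specialization $\lambda=\mu$ giving $|z_\mu|\le K\,t^{(b|\mu|-\|\mu\|^2)/2}$, and the idea of isolating $z_{\mu^*}$ by inverting a truncation are reasonable, and in the one-part case $m=1$ the plan can actually be closed: there $(V^{(N)})^{-1}$ is given by Lagrange interpolation at the geometric nodes $1,t,\dots,t^N$, a direct computation gives $\bigl|\sum_{\lambda\le N}\beta^{(N)}_\lambda t^{\lambda\nu}\bigr|\le C_t\,t^{N\nu-\binom{N+1}{2}}$ for $\nu>N$, and combined with your Step~1 bound the tail is $O\bigl(t^{N(b-1)/2}\bigr)\to 0$, which is precisely where $b<1$ enters. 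The genuine gap is that for $m\ge 2$ (the case the paper needs) you never establish the estimate you yourself call the main obstacle. The nodes are partitions, not a product grid, so $V^{(N)}$ is not a tensor product of one-dimensional Vandermondes, there is no Lagrange formula, and no bound on the row functionals $\nu\mapsto\sum_{\lambda\in M_N}\beta^{(N)}_\lambda t^{\sum_i\lambda_i\nu_i}$ is offered or obviously available; saying that ``$b<1$ is exactly what makes this possible'' restates the lemma rather than proving it. Moreover the invertibility argument as sketched does not work: fixing $\nu_1$ first yields no block-triangular structure, since $t^{\sum_i\lambda_i\nu_i}$ does not separate the first coordinate exactly. (Invertibility is nevertheless true: $V^{(N)}$ is the Gram matrix at distinct points of the strictly positive definite kernel $(x,y)\mapsto t^{\langle x,y\rangle}$, because $\sum_{i,j}c_ic_j\,t^{\langle x_i,x_j\rangle}=\sum_\alpha w_\alpha\bigl(\sum_i c_i x_i^\alpha\bigr)^2$ with all $w_\alpha>0$. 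But invertibility is not the bottleneck; the size of the inverse is, and that is left unproven.)

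For comparison, the paper does not reprove the lemma at all: it cites \cite[Theorem 8.2]{Woo17} and only remarks that Wood's proof goes through verbatim with the bound $t^{\sum_i(\lambda_i^2+b\lambda_i)/2}$ for any $b<1$. Wood's mechanism is different from yours: instead of inverting a finite truncation and controlling a Lebesgue-type constant, one forms, for each target $\mu$, an infinite linear combination over all $\lambda$ whose coefficients decay like $t^{-\lambda_1^2/2+O(\lambda_1)}$ and are chosen (via $q$-series identities, treating one part at a time by induction on $m$) so that the resulting kernel annihilates the unwanted $\nu$ exactly; the hypothesis $b<1$ is used only to make that infinite sum absolutely convergent against the assumed moment bound. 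So nothing in the paper can be borrowed to fill your missing step: to complete the argument you must either prove the multivariate bound on $(V^{(N)})^{-1}$ yourself (not a routine matter) or switch to Wood's construction, or simply cite it as the paper does.
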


\begin{proof}
See \cite[Theorem 8.2]{Woo17}. The upper bound given there is of the form $O(F^m t^{\sum_{i} \frac{\lambda_i^2 - \lambda_i}{2}})$, but the same proof works for the bound $O(F^m t^{\sum_{i} \frac{\lambda_i^2 + b \lambda_i}{2}})$ for every $b<1$. 
\end{proof}

The following theorem can be proved exactly as in \cite[Theorem 8.3]{Woo17} using Lemma \ref{lem3a} and \ref{lem3b} (for $t = \left| \kappa \right|$ and $b=0$). Since the limits of the moments appear in Theorem \ref{thm2g} are bounded above by $m(G_{\lambda})$, this theorem implies that the limiting distribution of the cokernels of Haar random matrices in $\HH_n(\OO)$ is uniquely determined by their moments. 

\begin{theorem} \label{thm3c}
Let $(A_n)_{n \geq 1}$ and $(B_n)_{n \geq 1}$ be sequences of random finitely generated $\OO$-modules. Let $a$ be a non-negative integer and $\mathcal{M}_a$ be the set of finite $\OO$-modules $M$ such that $\pi^a M = 0$. Suppose that for every $G \in \mathcal{M}_a$, the limit 
$\displaystyle \lim_{n \rightarrow \infty} \PP(A_n \otimes \OO/\pi^a \OO \cong G)$ exists and we have
$$
\lim_{n \rightarrow \infty} \EE(\# \Sur_{\OO}(A_n , G)) 
= \lim_{n \rightarrow \infty} \EE(\# \Sur_{\OO}(B_n , G)) 
= O(m(G)).
$$
Then for every $G \in \mathcal{M}_a$, we have $\displaystyle \lim_{n \rightarrow \infty} \PP(A_n \otimes \OO/\pi^a \OO \cong G) = \lim_{n \rightarrow \infty} \PP(B_n \otimes \OO/\pi^a \OO \cong G)$.
\end{theorem}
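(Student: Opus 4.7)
The plan is to convert the hypothesis on surjection moments into a statement about Hom moments, extract a convergent subsequence for $B_n$, and then apply Lemma \ref{lem3b} (which is a uniqueness statement for exactly this type of moment series).

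I would start from the identity $\#\Hom_{\OO}(M, G) = \sum_{G' \leq G} \#\Sur_{\OO}(M, G')$ for every finitely generated $\OO$-module $M$ and every $G \in \mathcal{M}_a$. Since the sum is over the finite set of submodules of $G$, taking expectations, passing to $n \to \infty$, and invoking the hypothesis together with Lemma \ref{lem3a} gives
$$\lim_{n \to \infty} \EE(\#\Hom_{\OO}(A_n, G_\lambda)) \;=\; O\!\left(F^{\lambda_1} m(G_\lambda)\right) \;=\; O\!\left(F^{a} m(G_\lambda)\right),$$
and the same limit (with the same value) for $B_n$, since the corresponding surjection moment limits agree.

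Next, I would expand each Hom moment in terms of the distribution of the cokernel modulo $\pi^a$. Any homomorphism from $A_n$ into a module killed by $\pi^a$ factors through $A_n \otimes \OO/\pi^a\OO$, and $\#\Hom_{\OO}(G_\mu, G_\lambda) = |\kappa|^{\sum_k \mu_k' \lambda_k'}$, so setting $t = |\kappa|$,
$$\EE(\#\Hom_{\OO}(A_n, G_\lambda)) \;=\; \sum_\mu \PP(A_n \otimes \OO/\pi^a\OO \cong G_\mu) \cdot t^{\sum_k \mu_k'\lambda_k'},$$
the sum running over partitions $\mu$ with $\mu_1 \leq a$, equivalently $\mu'$ of length at most $a$. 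Writing $P(\mu) := \lim_n \PP(A_n \otimes \OO/\pi^a\OO \cong G_\mu)$ (which exists by hypothesis) and extracting, by a diagonal argument over the countable index set of isomorphism classes in $\mathcal{M}_a$, a subsequence along which the analogous $B_n$-probabilities converge to limits $P'(\mu)$, one passes to the limit in $n$ inside the sum---the key technical step---to arrive at
$$\sum_\mu P(\mu) \cdot t^{\sum_k \mu_k'\lambda_k'} \;=\; \sum_\mu P'(\mu) \cdot t^{\sum_k \mu_k'\lambda_k'} \;=\; O\!\left(F^a t^{\sum_j \lambda_j'^{2}/2}\right).$$

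Finally, applying Lemma \ref{lem3b} with $m = a$, $t = |\kappa|$, $b = 0$, and with the ``$\mu$'' and ``$\lambda$'' of that lemma played by $\mu'$ and $\lambda'$, one obtains $P(\mu) = P'(\mu)$ for every partition $\mu$ with $\mu_1 \leq a$. Since every subsequential limit of the $B_n$-probabilities coincides with $P$, the full sequence converges to $P$, which is the conclusion. The main obstacle is the passage of $\lim_n$ through the infinite sum over $\mu$ in the second step; Lemma \ref{lem3a} is exactly what supplies the geometric tail decay needed to truncate the sum at partitions of bounded ``size'', so that the finite truncation can be handled by pointwise convergence of the $P_n(\mu)$'s and the tail by the a priori moment bound uniformly in $n$ (coming from the elementary inequality $\#\Aut_\OO(G_\mu) \cdot P_n(\mu) \leq \EE(\#\Sur_\OO(A_n, G_\mu))$).
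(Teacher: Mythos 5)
Your architecture is exactly the one the paper intends (the paper simply defers to \cite[Theorem 8.3]{Woo17}, invoking Lemma \ref{lem3a} and Lemma \ref{lem3b} with $t=\left|\kappa\right|$ and $b=0$): convert surjection moments to Hom moments via $\#\Hom_{\OO}(M,G_\lambda)=\sum_{G'\leq G_\lambda}\#\Sur_{\OO}(M,G')$ and Lemma \ref{lem3a}, expand $\EE(\#\Hom_{\OO}(A_n,G_\lambda))=\sum_{\mu}P_n(\mu)\,t^{\sum_k\mu_k'\lambda_k'}$ using that any homomorphism into a module killed by $\pi^a$ factors through $A_n\otimes\OO/\pi^a\OO$, extract a diagonal subsequence for the $B_n$-probabilities, apply Lemma \ref{lem3b} with $m=a$ and the partitions transposed, and upgrade from subsequences to the full sequence by compactness. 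All of that is correct and is the paper's route.

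The gap is in the step you yourself flag as the key one: interchanging $\lim_{n\to\infty}$ with the infinite sum over $\mu$. You propose to dominate the tail by $P_n(\mu)\leq \EE(\#\Sur_{\OO}(A_n,G_\mu))/\left|\Aut_{\OO}(G_\mu)\right|$ together with ``the a priori moment bound uniformly in $n$''. But the hypothesis only bounds the \emph{limits} $\lim_n\EE(\#\Sur_{\OO}(A_n,G_\mu))=O(m(G_\mu))$; for each fixed $\mu$ this gives $\EE(\#\Sur_{\OO}(A_n,G_\mu))\leq 2c\,m(G_\mu)$ only for $n\geq N_\mu$, and $\sup_n\EE(\#\Sur_{\OO}(A_n,G_\mu))$, though finite, carries a constant depending on $\mu$. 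So there is no bound uniform in $n$ \emph{and} $\mu$, which is precisely what your truncation needs; pointwise convergence of $P_n(\mu)$ plus convergence of each moment does not by itself exclude mass escaping to modules of large rank and inflating the moment sums (Fatou only gives the inequality $\sum_\mu P(\mu)t^{\sum_k\mu_k'\lambda_k'}\leq\lim_n\EE(\#\Hom_{\OO}(A_n,G_\lambda))$, and Lemma \ref{lem3b} needs equality of the two series). The standard repair, which is what the argument behind \cite[Theorem 8.3]{Woo17} effectively does, is to dominate the tail by testing against a single fixed module: take $H_k:=(\OO/\pi\OO)^k$ with $k>\sum_i\lambda_i$; then $P_n(\mu)\leq \EE(\#\Sur_{\OO}(A_n,H_k))/\#\Sur_{\OO}(G_\mu,H_k)$, the numerator is bounded uniformly in $n$ because it converges (one fixed module), and for $\mu_1'\geq k$ one has $\#\Sur_{\OO}(G_\mu,H_k)\geq c_k\,t^{k\mu_1'}$, which beats the weight $t^{\sum_k\mu_k'\lambda_k'}\leq t^{\mu_1'\sum_i\lambda_i}$; since only finitely many $\mu$ with parts at most $a$ have $\mu_1'<K$, this gives a tail bound uniform in $n$ that tends to $0$ as $K\to\infty$. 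With that substitution your proof is complete and matches the intended one.
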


\section{Universality of the cokernel: the unramified case} \label{Sec4}

In this section, we assume that $K/\Q_p$ is unramified. 
Let $m$ be a positive integer and $G$ be a finite $\OO$-module of type $\lambda = (\lambda_1 \geq \cdots \geq \lambda_r)$ such that $p^m G = 0$ (equivalently, $m \geq \lambda_1$). Let $R$ and $R_1$ be the rings $\OO / p^m \OO$ and $\Z / p^m \Z$, respectively.
Denote the image of $w$ in $R$ also by $w$. (Recall that $w$ is a fixed primitive $(p^2-1)$-th root of unity in $\overline{\Q_p}$.) 
For $X \in \HH_n(\OO)$ (resp. $X \in \HH_n(R)$), denote $X_{ij} = Y_{ij} + w Z_{ij}$ for $Y_{ij}, Z_{ij} \in \Z_p$ (resp. $Y_{ij}, \, Z_{ij} \in R_1$). Then $Z_{ii}=0$ and $X$ is determined by $n^2$ elements $Y_{ij}$ ($i \leq j$), $Z_{ij}$ ($i < j$).

\begin{definition} \label{def4a}
Let $0 < \varepsilon < 1$. A random variable $x$ in $\Z_p$ (or $R_1$) is \textit{$\varepsilon$-balanced} if $\PP (x \equiv r \,\, (\text{mod } p)) \leq 1 - \varepsilon$ for every $r \in \Z/p\Z$. 
A random matrix $X$ in $\HH_n(\OO)$ (or $\HH_n(R)$) is \textit{$\varepsilon$-balanced} if the $n^2$ elements $Y_{ij}$ ($i \leq j$), $Z_{ij}$ ($i < j$) are independent and $\varepsilon$-balanced.
\end{definition}

Let $V=R^n$ (resp. $W=R^n$) with a standard basis $v_1, \cdots, v_n$ (resp. $w_1, \cdots, w_n$) and its dual basis $v_1^*, \cdots, v_n^*$ (resp. $w_1^*, \cdots, w_n^*$). For an $\varepsilon$-balanced matrix $X_0 \in \HH_n(\OO)$, its modulo $p^m$ reduction $X \in \HH_n(R)$ is also $\varepsilon$-balanced and we have
\begin{equation} \label{eq4a}
\EE(\# \Sur_{\OO}(\cok(X_0), G))
= \EE(\# \Sur_{R}(\cok(X), G))
= \sum_{F \in \Sur_R(V, G)} \PP (FX = 0).
\end{equation}
(Here we understand $X \in \HH_n(R)$ as an element of $\Hom_R(W, V)$, so $FX \in \Hom_R(W, G)$.) Therefore, to compute the moments of $\cok(X_0)$, it is enough to compute the probability $\PP (FX = 0)$ for each $F \in \Sur_R(V, G)$.

\begin{lemma} \label{lem4b}
Let $\zeta := \zeta_{p^m} \in \C$ be a primitive $p^m$-th root of unity and $\tr : R \rightarrow R_1$ be the trace map given by $\tr (x) := x + \sigma(x)$. Then we have
\begin{equation} \label{eq4new1}
1_{FX=0} = \frac{1}{\left| G \right|^n}\sum_{C \in \Hom_R(\Hom_R(W, G), R)} \zeta^{\tr(C(FX))}. 
\end{equation}
\end{lemma}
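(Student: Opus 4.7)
The plan is to interpret the identity as a standard character-orthogonality statement on the finite abelian group $H := \Hom_R(W, G)$. Note that $FX \in \Hom_R(W, G) = H$ (since $F \in \Hom_R(V, G)$ and $X \in \Hom_R(W, V)$), and $|H| = |G|^n$ since $W = R^n$. For each $C \in \Hom_R(H, R)$ the assignment $\chi_C : H \to \C^\times$, $\chi_C(y) := \zeta^{\tr(C(y))}$, is a group homomorphism because $\tr$ and $C$ are additive and $\zeta$ has order $p^m$ (which annihilates $H$). Thus the right-hand side of (\ref{eq4new1}) is $\frac{1}{|H|}\sum_C \chi_C(FX)$. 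By the orthogonality of characters on the finite abelian group $H$, it suffices to prove that the assignment $C \mapsto \chi_C$ is a bijection from $\Hom_R(H, R)$ onto the full character group $\widehat{H} := \Hom(H, \C^\times)$.

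The first half of this bijection is the identification $\widehat{H} \cong \Hom_{R_1}(H, R_1)$, which holds because every character of $H$ takes values in the $p^m$-th roots of unity, and $\zeta$ furnishes an isomorphism from $R_1$ onto this cyclic group; additivity then upgrades this to an $R_1$-module isomorphism. The second half is to show that the $R_1$-linear map
\[
\Hom_R(H, R) \longrightarrow \Hom_{R_1}(H, R_1), \qquad C \longmapsto \tr \circ C,
\]
is a bijection. By the tensor-hom adjunction this map is identified with the map induced by $\tr : R \to R_1$, and so the task reduces to showing that $\tr : R \to R_1$ induces an isomorphism $R \xrightarrow{\sim} \Hom_{R_1}(R, R_1)$ of $R$-modules via $a \mapsto \tr(a \cdot -)$.

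The latter is precisely the perfectness of the trace pairing on $R = \OO/p^m\OO$ over $R_1 = \Z/p^m\Z$. Since $R = R_1 \oplus R_1 w$ as an $R_1$-module, the matrix of the trace pairing in the basis $\{1, w\}$ has determinant $(w - \sigma(w))^2$, and Remark \ref{rmk1g}(1) shows $w - \sigma(w) \in \OO^{\times}$, so this discriminant is a unit in $\Z_p$ (and hence in $R_1$). This gives non-degeneracy, and injectivity of $C \mapsto \tr \circ C$ then follows: if $\tr(C(y)) = 0$ for all $y \in H$, applying this to $ry$ in place of $y$ (for arbitrary $r \in R$) and using $R$-linearity of $C$ yields $\tr(rC(y)) = 0$ for all $r \in R$, whence $C(y) = 0$. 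A size count (for any cyclic $R$-module $R/p^aR$ both $\Hom_R$ and $\Hom_{R_1}$ sides have cardinality $p^{2a}$, and $H$ is a finite direct sum of such) upgrades injectivity to a bijection.

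The main conceptual ingredient is the perfectness of the trace pairing, which I do not expect to be an obstacle since it is a standard consequence of $K/\Q_p$ being unramified. The remaining steps are the routine packaging of Pontryagin duality for a finite abelian $R$-module in terms of $\Hom_R(-, R)$.
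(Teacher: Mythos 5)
Your proposal is correct, but it takes a genuinely different route from the paper. The paper argues directly: after checking the case $FX=0$, it fixes a nonzero $\alpha\in\Hom_R(W,G)$, shows that the fibers of $C\mapsto C\alpha$ over $R[p^t]$ (where $p^t$ is the exact annihilator of $\alpha$) are nonempty and of equal size, and thereby reduces the character sum to $\sum_{x\in R[p^t]}\zeta^{\tr(x)}$, which it evaluates to $0$ by hand, splitting into the cases $p$ odd and $p=2$ (using $w+w^2=-1$ in the latter). You instead prove the stronger structural statement that $C\mapsto\zeta^{\tr(C(\cdot))}$ is a bijection from $\Hom_R(\Hom_R(W,G),R)$ onto the full character group of $H=\Hom_R(W,G)$, and then invoke orthogonality; the pivotal input is perfectness of the trace pairing of $R=\OO/p^m\OO$ over $R_1=\Z/p^m\Z$, which you correctly reduce to the unit discriminant $(w-\sigma(w))^2$ (Remark \ref{rmk1g}(1)), with the injectivity-plus-counting step filling in bijectivity. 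Both arguments are sound. The paper's computation is elementary and self-contained, in the spirit of Wood's original argument; yours is more conceptual, avoids the $p$ odd versus $p=2$ case split, makes transparent exactly where unramifiedness is used, and incidentally explains why the ramified case (Lemma \ref{lem5a}) must replace $\tr$ by the map $T$: there the trace pairing is no longer perfect. One small point to keep in mind when writing it up fully: verify that the composite of your two identifications really is $C\mapsto\zeta^{\tr(C(\cdot))}$ (your adjunction normalization, evaluating at $1\in R$, does give this), and note that $(w-\sigma(w))^2$ lies in $\Z_p$ and is a unit there, so its image in $R_1$ is invertible — both of which you have essentially already addressed.
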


\begin{proof}
Write $J := \Hom_R(\Hom_R(W, G), R)$ for simplicity. When $FX = 0$, the right-hand side of the equation (\ref{eq4new1}) is given by
$$
\frac{1}{\left| G \right|^n}\sum_{C \in J} \zeta^{\tr(C(0))}
= \frac{\left| J \right|}{\left| G \right|^n}
= 1.
$$
Therefore it is enough to show that if $\alpha \in \Hom_R(W, G)$ is non-zero, then 
$$
\frac{1}{\left| G \right|^n}\sum_{C \in J} \zeta^{\tr (C \alpha)} = 0.
$$
Let $t>0$ be an integer such that $p^{t-1} \alpha \neq 0$ and $p^t \alpha = 0$ in $\Hom_R(W, G)$. Without loss of generality, we may assume that $p^{t-1} \alpha(w_1) \neq 0$ (and $p^t \alpha(w_1)=0$). For each $x \in R$ with $p^t x = 0$, there exists $C_1 \in \Hom_R(G, R)$ such that $C_1(\alpha(w_1)) = x$. Since we have $G \cong \prod_{j=1}^{r} R/p^{\lambda_j}R$, $\alpha(w_1) \in G$ corresponds to $(\alpha(w_1)_1, \cdots, \alpha(w_1)_r) \in \prod_{j=1}^{r} R/p^{\lambda_j}R$ and $p^{t-1}\alpha(w_1)_k \neq 0$ for some $k$. Then $\alpha(w_1)_k = p^{\lambda_k-t}u$ for some $u \in R^{\times}$ so the map $C_1 \in \Hom_R(G, R)$ defined by $\displaystyle C_1(y_1, \cdots, y_r)=\frac{x}{p^{\lambda_k-t}u}y_k$ is well-defined and satisfies $C_1(\alpha(w_1)) = x$.

Using an isomorphism $\Hom_R(W, G) \cong G^n$ ($f \mapsto (f(w_1), \cdots, f(w_n))$, let $C \in J$ be the element which corresponds to $(C_1, 0, \cdots, 0) \in \Hom_R(G, R)^n$. Then $C \alpha = C_1(\alpha(w_1)) = x$. Conversely, if there exists $C \in J$ such that $C \alpha = x$, then $p^t x = C(p^t \alpha) = 0$. Therefore $J_x := \left\{ C \in J : C \alpha = x \right\}$ is nonempty if and only if $x \in R[p^t] := \left\{ x \in R : p^tx = 0 \right\}$. For any $x \in R[p^t]$ and $C_x \in J_x$, we have $J_x = \left\{ C_x + C_0 : C_0 \in J_0 \right\}$ so each of the sets $J_x$ has the same cardinality $\displaystyle \frac{\left| G \right|^n}{\left| R[p^t] \right|} = \frac{\left| G \right|^n}{p^{2t}}$. Now we have
\begin{equation} \label{eq4new2}
\frac{1}{\left| G \right|^n}\sum_{C \in J} \zeta^{\tr(C \alpha)}
= \frac{1}{p^{2t}} \sum_{x \in R[p^t]} \zeta^{\tr(x)}
= \frac{1}{p^{2t}} \sum_{y, z \in R_1[p^t]} \zeta^{2y+(w+w^p)z}.
\end{equation} 
(Recall that we have $\sigma(w) = w^p$ in the unramified case.) When $p$ is odd, we have
$$
\sum_{y \in R_1[p^t]}  \zeta^{2y+(w+w^p)z} = \zeta^{(w+w^p)z} \sum_{y \in R_1[p^t]}  \zeta^{2y} = 0
$$
for each $z \in R_1[p^t]$. When $p=2$, we have $w + w^2 = -1$ so
$$
\sum_{z \in R_1[p^t]}  \zeta^{2y+(w+w^p)z} = \zeta^{2y} \sum_{z \in R_1[p^t]}  \zeta^{-z} = 0
$$
for each $y \in R_1[p^t]$. Thus the right-hand side of the equation (\ref{eq4new2}) is always zero.
\end{proof}

By the above lemma, we have
\begin{equation} \label{eq4b}
\PP (FX = 0)
= \EE (1_{FX=0})
= \frac{1}{\left| G \right|^n}\sum_{C \in \Hom_R(\Hom_R(W, G), R)} \EE (\zeta^{\tr(C(FX))}).    
\end{equation}

For a finite $R$-module $G$, its conjugate ${}^{\sigma}G$ is an $R$-module which is same as $G$ as abelian groups, but the scalar multiplication is given by $r \cdot x := \sigma(r)x$. 
For every $f \in G^* := \Hom_{R}(G, R)$, there is a conjugate ${}^{\sigma}f \in ({}^{\sigma}G)^*$ defined by $({}^{\sigma}f)(x) := \sigma(f(x))$. 
Define an $R$-module structure on $G^*$ by $(rf)(x) := r f(x)$. 

\begin{lemma} \label{lem4new1}
Let $G$ and $G'$ be finite $R$-modules.
\begin{enumerate}
    \item There is a canonical $R$-module isomorphism ${}^{\sigma} (G^*) \cong ({}^{\sigma}G)^*$.

    \item There is a canonical $R$-module isomorphism $\Hom_R(G, G') \cong \Hom_R({}^{\sigma}G, {}^{\sigma}G')$.
\end{enumerate}
\end{lemma}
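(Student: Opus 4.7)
The plan is to build explicit isomorphisms by conjugating with $\sigma$ and verify everything by direct computation, exploiting the involution $\sigma^{2} = \id_{R}$. Both parts reduce to unwinding what it means for a set-theoretic map to be $R$-linear with respect to a $\sigma$-twisted scalar action.

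For part (1), I would define $\Phi \colon {}^{\sigma}(G^{*}) \to ({}^{\sigma}G)^{*}$ by $\Phi(f) := \sigma \circ f$, i.e., $\Phi(f)(x) = \sigma(f(x))$. Three checks are needed. First, $\Phi(f)$ lies in $({}^{\sigma}G)^{*}$, since for $r \in R$ and $x \in G$,
\[
\Phi(f)(r \cdot_{{}^{\sigma}G} x) \;=\; \sigma\bigl(f(\sigma(r)x)\bigr) \;=\; \sigma\bigl(\sigma(r)f(x)\bigr) \;=\; r\,\sigma(f(x)) \;=\; r\,\Phi(f)(x).
\]
Second, $\Phi$ intertwines the $R$-actions: $\Phi(r \cdot_{{}^{\sigma}(G^{*})} f)(x) = \sigma\bigl(\sigma(r)f(x)\bigr) = r\,\Phi(f)(x) = (r\,\Phi(f))(x)$. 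Third, $\Phi$ is a bijection because the same formula $g \mapsto \sigma \circ g$ gives its inverse by $\sigma^{2} = \id$.

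For part (2), I would observe that a set-map $f \colon G \to G'$ is $R$-linear in the usual sense if and only if the same underlying map, viewed as $f \colon {}^{\sigma}G \to {}^{\sigma}G'$, is $R$-linear with respect to the twisted actions---both conditions unwind to $f(sx) = sf(x)$ for all $s \in R$ and $x \in G$. So the assignment $f \mapsto f$ (same underlying function) yields a canonical bijection between the two Hom-sets, which one then checks is compatible with the respective $R$-module structures. Alternatively, part (2) can be bootstrapped from part (1) by decomposing $G' = \prod_{j} R/p^{\lambda_{j}}R$, applying part (1) in each coordinate, and reassembling, using that $\sigma(p) = p$ provides a canonical identification ${}^{\sigma}(R/p^{\lambda_{j}}R) \cong R/p^{\lambda_{j}}R$ in the unramified setting. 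I do not anticipate any deep obstacle here; the only point that requires care is the bookkeeping of the $\sigma$-twist in the various $R$-actions, since the natural action on $\Hom_{R}({}^{\sigma}G, {}^{\sigma}G')$ carries an extra $\sigma$ compared to that on $\Hom_{R}(G, G')$, and this must be tracked consistently throughout.
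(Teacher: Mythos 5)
Your proposal is correct and takes essentially the same route as the paper: part (1) is proved there by the very same map $f \mapsto \sigma \circ f$ with the same verifications of well-definedness, compatibility with the twisted scalar action, and invertibility via $\sigma^{2} = \id$, and part (2) by the same observation that $R$-linearity for the original and for the twisted actions amount to the identical condition on the underlying map, so that $f \mapsto f$ gives the identification. The extra $\sigma$-twist in the action on $\Hom_R({}^{\sigma}G, {}^{\sigma}G')$ that you flag is handled in the paper by exactly the same bookkeeping, so your main argument matches the paper's proof and the alternative coordinatewise reduction is not needed.
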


\begin{proof}
\begin{enumerate}
    \item Let $\phi : {}^{\sigma} (G^*) \rightarrow ({}^{\sigma}G)^*$ be the map defined by $f \mapsto {}^{\sigma}f$. It is clear that $\phi$ is a bijection. 
For $f, g \in {}^{\sigma} (G^*)$, we have $({}^{\sigma}(f+g))(x) = \sigma((f+g)(x)) = \sigma(f(x)) + \sigma(g(x)) = ({}^{\sigma}f)(x) + ({}^{\sigma}g)(x)$ so ${}^{\sigma}(f+g) = {}^{\sigma}f + {}^{\sigma}g$. 
For $f \in {}^{\sigma} (G^*)$ and $r \in R$, we have $({}^{\sigma}(r \cdot f))(x) = ({}^{\sigma}(\sigma(r) f))(x) = \sigma((\sigma(r) f)(x)) = \sigma(\sigma(r) f(x))$ 
and $(r ({}^{\sigma}f))(x) = ({}^{\sigma}f)(r \cdot x) 
= \sigma(f(\sigma(r)x)) = \sigma(\sigma(r) f(x))$ so ${}^{\sigma}(r \cdot f) = r ({}^{\sigma}f)$. Therefore $\phi$ is an $R$-module isomorphism.

    \item Let $\phi : \Hom_R(G, G') \rightarrow \Hom_R({}^{\sigma}G, {}^{\sigma}G')$ be the map defined by $\phi(f)(x)=f(x)$. It is clear that $\phi$ is an isomorphism of abelian groups. For $f \in \Hom_R(G, G')$ and $r \in R$, we have $\phi(rf)(x) = f(r \cdot x) = \phi(f)(r \cdot x) = (r\phi(f))(x)$ for every $x \in {}^{\sigma}G$ so $\phi(rf) = r\phi(f)$. \qedhere
\end{enumerate}
\end{proof}

Now we identify $V = (^{\sigma}W)^*$ (so $W \cong {}^{\sigma} (V^*) \cong ({}^{\sigma}V)^*$), $v_i = {}^{\sigma}(w_i^*)$ and $w_i = {}^{\sigma}(v_i^*)$. For $F \in \Hom_R(V, G)$ and 
$$
C \in \Hom_R(\Hom_R(W, G), R) 
\cong \Hom_R(W^*, G^*) 
\cong \Hom_R({}^{\sigma} (W^*), {}^{\sigma} (G^*))
\cong \Hom_R(V, ({}^{\sigma}G)^*),
$$
denote $e_{ij} := C(v_j)(F(v_i)) \in R$. (By abuse of notation, we denote the image of $C$ in $\Hom_R(V, ({}^{\sigma}G)^*)$ also by $C$.) 
Since $X$ is Hermitian, $X_{ji} = \sigma(X_{ij}) = Y_{ij} + w^p Z_{ij}$. Therefore
\begin{equation} \label{eq4c}
\begin{split}
\tr (C(FX)) 
& = \tr (\sum_{i, j} e_{ij}X_{ij} ) \\
& = \sum_{i=1}^{n} \tr(e_{ii}X_{ii}) + \sum_{i < j} \tr (e_{ij}(Y_{ij}+ w Z_{ij}) + e_{ji}(Y_{ij}+ w^p Z_{ij})) \\
& = \sum_{i=1}^{n} \tr(e_{ii}) Y_{ii} 
+ \sum_{i < j} \tr(e_{ij}+e_{ji})Y_{ij} 
+ \sum_{i < j} \tr(e_{ij} w +e_{ji} w^p)Z_{ij}.
\end{split}
\end{equation}
By the equations (\ref{eq4b}) and (\ref{eq4c}), we have
\begin{equation} \label{eq4d}
\begin{split}
& \PP (FX=0) \\
= \, &  \frac{1}{\left| G \right|^n} \sum_{C} 
\left ( \prod_{i} \EE(\zeta^{\tr(e_{ii}) Y_{ii}}) \right )
\left ( \prod_{i < j} \EE(\zeta^{\tr(e_{ij}+e_{ji})Y_{ij}}) \right ) \left ( \prod_{i < j} \EE(\zeta^{\tr(e_{ij}w +e_{ji} w^p)Z_{ij}}) \right ) \\
= & \frac{1}{\left| G \right|^n} \sum_{C} p_F(C)
\end{split}
\end{equation}
for
$$
p_F(C) := \left ( \prod_{i} \EE(\zeta^{\tr(e_{ii}) Y_{ii}}) \right )
\left ( \prod_{i < j} \EE(\zeta^{\tr(e_{ij}+e_{ji})Y_{ij}}) \right ) \left ( \prod_{i < j} \EE(\zeta^{\tr(e_{ij}w +e_{ji} w^p)Z_{ij}}) \right ).
$$
For every $i \leq j$, define $E(C,F,i,j) := e_{ij} + \sigma(e_{ji})$.

\begin{remark} \label{rmk4c}
Let $e_{ij} = a+w^p b$ and $e_{ji} = c+wd$ for $a,b,c,d \in R_1$. Then we have
\begin{equation*}
\begin{split}
\tr(e_{ij}+e_{ji}) & = 2(a+c)+(w+w^p)(b+d), \\
\tr(e_{ij} w +e_{ji} w^p) & = (w+w^p)(a+c)+2w^{p+1}(b+d).
\end{split}    
\end{equation*}
One can check that both are zero if and only if $a+c = b+d = 0$. (Assume that $\tr(e_{ij}+e_{ji}) = \tr(e_{ij} w +e_{ji} w^p) = 0$. If one of $a+c$ and $b+d$ is zero, then the other one should also be zero. If both $a+c$ and $b+d$ are non-zero, then we have $2(a+c) \cdot 2w^{p+1}(b+d) = (w+w^p)(b+d) \cdot (w+w^p)(a+c)$ so $(w+w^p)^2 - 2 \cdot 2w^{p+1} = (w-w^p)^2=0$, which is not true.) Since 
$$
E(C, F, i, j) = (a+c) + w^p (b+d), 
$$
this is equivalent to the condition $E(C, F, i, j)=0$.  
By \cite[Lemma 4.2]{Woo17}, we have $\displaystyle \left| p_F(C) \right| \leq \exp(-\frac{\varepsilon N}{p^{2m}})$ where $N$ is the number of the non-zero coefficients $E(C, F, i, j)$. 
\end{remark}

For $F \in \Hom_R(V, G)$ and $C \in \Hom_R(V, ({}^{\sigma}G)^*)$, define the maps $\phi_{F, C} \in \Hom_R(V, G \oplus ({}^{\sigma}G)^*)$ and $\phi_{C, F} \in \Hom_R(V, ({}^{\sigma}G)^* \oplus G)$ by $\phi_{F,C}(v) = (F(v), C(v))$ and $\phi_{C, F}(v) = (C(v), F(v))$. Then, for a map
\begin{equation*}
\begin{split}
t : (({}^{\sigma}G)^* \oplus G) \times (G \oplus ({}^{\sigma}G)^*) & \rightarrow R  \\
((\phi_1, g_1), (g_2, \phi_2)) & \mapsto \phi_1(g_2) + \sigma(\phi_2(g_1))),
\end{split}
\end{equation*}
we have $t(\phi_{C, F}(v_j), \phi_{F, C}(v_i)) = E(C, F, i, j)$. For $\nu \subset [n]$, let $V_{\nu}$ (resp. $V_{\setminus \nu}$) be an $R$-submodule of $V$ generated by $v_i$ with $i \in \nu$ (resp. $i \in [n] \setminus \nu$). The following definitions are from \cite[p.928--929]{Woo17}. 

\begin{definition} \label{def4d}
Let $0 < \gamma < 1$ be a real number. For a given $F$, we say $C$ is $\gamma$-\textit{robust} for $F$ if for every $\nu \subset [n]$ with $\left| \nu \right| < \gamma n$, we have $\ker(\phi_{C, F} \mid_{V \setminus \nu}) \neq \ker(F \mid_{V \setminus \nu})$. Otherwise, we say $C$ is $\gamma$-\textit{weak} for $F$. 
\end{definition}

\begin{definition} \label{def4e}
Let $d_0 > 0$ be a real number. An element $F \in \Hom_R(V, G)$ is called a \textit{code} of distance $d_0$ if for every $\nu \subset [n]$ with $\left| \nu \right| < d_0$, we have $F V_{\setminus \nu} = G$. 
\end{definition}

The following lemmas are analogues of \cite[Lemma 3.1 and 3.5]{Woo17}, whose proofs are also identical. Since the classification of finitely generated modules over $\OO/p^m \OO$ and finitely generated modules over $\Z/p^m\Z$ are the same, we can imitate the proof given in \cite{Woo17}. The only difference is that the equation $t(\phi_{C, F}(v_i), \phi_{F, C}(v_i)) = 2E(C, F, i, i)$ in \cite{Woo17} has changed to $t(\phi_{C, F}(v_i), \phi_{F, C}(v_i)) = E(C, F, i, i)$ in our case, which does not affect the proof at all.

\begin{lemma} \label{lem4f1}
There is a constant $C_G > 0$ such that for every $n$ and $F \in \Hom_R(V, G)$, the number of $\gamma$-weak $C$ is at most
$$
C_G \binom{n}{\left \lceil \gamma n \right \rceil - 1} \left| G \right|^{\gamma n}.
$$
\end{lemma}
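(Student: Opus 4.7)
The plan is to follow the proof of \cite[Lemma 3.1]{Woo17} essentially verbatim, noting only that the codomain of $C$ is $({}^{\sigma}G)^{*}$ rather than $G$. Since $({}^{\sigma}G)^{*}$ is a finite $R$-module of the same cardinality as $G$ and with the same minimal number of generators $d(G) := \dim_{\kappa}(G/\pi G)$, every cardinality bound used in \cite{Woo17} still applies.

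First I would reformulate $\gamma$-weakness as a factorization condition. For $\nu \subset [n]$, the formula $\phi_{C,F}(v) = (F(v), C(v))$ gives
$$
\ker(\phi_{C,F}|_{V_{\setminus \nu}}) = \ker(F|_{V_{\setminus \nu}}) \cap \ker(C|_{V_{\setminus \nu}}),
$$
so the equality $\ker(\phi_{C,F}|_{V_{\setminus \nu}}) = \ker(F|_{V_{\setminus \nu}})$ is equivalent to $\ker(F|_{V_{\setminus \nu}}) \subseteq \ker(C|_{V_{\setminus \nu}})$, i.e., to the existence of a factorization of $C|_{V_{\setminus \nu}}$ through the surjection $F|_{V_{\setminus \nu}} : V_{\setminus \nu} \twoheadrightarrow F(V_{\setminus \nu})$. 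Hence $C$ is $\gamma$-weak for $F$ precisely when such a $\nu$ with $|\nu| < \gamma n$ exists.

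Second, for a fixed such $\nu$ I would count the admissible $C$ by specifying the pair $(C|_{V_\nu}, C|_{V_{\setminus \nu}})$. The restriction $C|_{V_\nu}$ is an arbitrary element of $\Hom_R(V_\nu, ({}^{\sigma}G)^*)$, giving at most $|({}^{\sigma}G)^{*}|^{|\nu|} = |G|^{|\nu|} \leq |G|^{\gamma n}$ possibilities. The restriction $C|_{V_{\setminus \nu}}$ is determined by a homomorphism $F(V_{\setminus \nu}) \to ({}^{\sigma}G)^*$; since $F(V_{\setminus \nu})$ is a submodule of $G$ over the local PIR $R$, it admits a generating set of size at most $d(G)$, so the number of such homomorphisms is at most $|({}^{\sigma}G)^{*}|^{d(G)} = |G|^{d(G)}$, a constant depending only on $G$.

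Finally I would apply a union bound over $\nu$ with $|\nu| < \gamma n$. For $\gamma$ chosen small enough that $\lceil \gamma n \rceil - 1 < n/2$ (automatic for $\gamma < 1/2$ and $n$ large), the binomial ratio $\binom{n}{k+1}/\binom{n}{k} = (n-k)/(k+1)$ is bounded below by $(1-\gamma)/\gamma - o(1) > 1$ for $k \leq \lceil \gamma n \rceil - 1$, so $\sum_{k=0}^{\lceil \gamma n \rceil - 1}\binom{n}{k}$ is bounded by a constant depending only on $\gamma$ times $\binom{n}{\lceil \gamma n \rceil - 1}$. Absorbing this geometric-series constant and $|G|^{d(G)}$ into $C_G$ yields the stated bound. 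The argument is essentially combinatorial; the only subtlety is the bookkeeping of keeping $({}^{\sigma}G)^{*}$ and $G$ distinct as $R$-modules, which is notational and poses no real obstacle.
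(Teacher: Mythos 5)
Your argument is in substance the paper's own proof: the paper simply defers to \cite[Lemma 3.1]{Woo17}, whose proof is exactly your factorization-plus-counting scheme, and your two counting steps are correct --- the reformulation $\ker(\phi_{C,F}|_{V_{\setminus\nu}})=\ker(F|_{V_{\setminus\nu}})\Leftrightarrow \ker(F|_{V_{\setminus\nu}})\subseteq\ker(C|_{V_{\setminus\nu}})$, the bound $|G|^{|\nu|}$ for $C|_{V_\nu}$, and the bound $|G|^{d(G)}$ for the factorized map, since a submodule of $G$ over the chain ring $R$ needs at most $d(G)=\dim_{\kappa}(G/\pi G)$ generators and $|({}^{\sigma}G)^*|=|G|$. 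The one blemish is your final union bound: summing over all $\nu$ with $|\nu|<\gamma n$ and dominating $\sum_{k\le\lceil\gamma n\rceil-1}\binom{n}{k}$ by its top term requires $\gamma<1/2$ and makes your constant depend on $\gamma$, whereas the lemma is stated for an arbitrary fixed $\gamma\in(0,1)$ with $C_G$ depending only on $G$; for $\gamma$ close to $1$ the sum is of order $2^n$ and is not $O\bigl(\binom{n}{\lceil\gamma n\rceil-1}\bigr)$. The standard repair (and what Wood does) is to note that the containment $\ker(F|_{V_{\setminus\nu}})\subseteq\ker(C|_{V_{\setminus\nu}})$ is preserved when $\nu$ is enlarged, so every $\gamma$-weak $C$ admits a witness $\nu$ of size exactly $\lceil\gamma n\rceil-1$; then the count is at most $\binom{n}{\lceil\gamma n\rceil-1}\,|G|^{\lceil\gamma n\rceil-1}\,|G|^{d(G)}$, giving the stated bound for all $\gamma\in(0,1)$ with $C_G=|G|^{d(G)}$. (Since the paper only ever invokes the lemma with $\gamma$ fixed and eventually small, your version would in fact suffice for the applications, but the enlargement trick is both cleaner and fully general.)
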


\begin{lemma} \label{lem4f2}
If $F \in \Hom_R(V, G)$ is a code of distance $\delta n$ and $C \in \Hom_R(V, ({}^{\sigma}G)^*)$ is $\gamma$-robust for $F$, then
$$
\# \left\{ (i, j) : i \leq j \text{ and } E(C, F, i, j) \neq 0 \right\} \geq \frac{\gamma \delta n^2}{2 \left| G \right|^2}.
$$
\end{lemma}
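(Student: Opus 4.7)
The plan is to follow the strategy of Wood's proof of Lemma~3.5 in \cite{Woo17} and argue by contradiction. Suppose $N < \gamma\delta n^2/(2|G|^2)$. Because $E(C,F,j,i)=\sigma(E(C,F,i,j))$, the support $S := \{(i,j) : E(C,F,i,j)\neq 0\}$ is transpose-symmetric, so $|S|\le 2N < \gamma\delta n^2/|G|^2$. Let $d_i := \#\{j : E(C,F,i,j)\ne 0\}$; a Markov-type bound applied to $\sum_i d_i = |S|$ shows that $T := \{i : d_i \ge \delta n/|G|^2\}$ has $|T|<\gamma n$. Applying the $\gamma$-robustness of $C$ with $\nu = T$ then yields a vector $v \in V_{\setminus T}$ with $F(v)=0$ and $C(v)\ne 0$.

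The key structural observation is that the row $E(C,F,i,\cdot)$ is determined by $\phi_{F,C}(v_i)\in G\oplus({}^{\sigma} G)^*$, a set of size at most $|G|^2$. I would coalesce $v$ by partitioning $[n]\setminus T$ into $\phi_{F,C}$-classes and selecting one representative $r_\alpha \in [n]\setminus T$ per class that meets $\mathrm{supp}(v)$. The resulting vector
$$v' := \sum_\alpha \Big(\sum_{i\in I_\alpha\cap\mathrm{supp}(v)} c_i\Big) v_{r_\alpha}$$
satisfies $\phi_{F,C}(v')=\phi_{F,C}(v)$ by $R$-linearity of $F$ and $C$, hence $F(v')=0$, $C(v')\ne 0$, $\mathrm{supp}(v')\subset[n]\setminus T$, $|\mathrm{supp}(v')|\le|G|^2$, and crucially $E(v',v_b)=E(v,v_b)$ for every $b$ (since $E(\cdot,v_b)$ factors through $\phi_{F,C}$).

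Since $F(v')=0$, one has $E(v',v_b) = \sigma(C(v')(F(v_b)))$, so $B := \{b : E(v',v_b)\ne 0\}$ coincides with $\{b : C(v')(F(v_b))\ne 0\}$. The code property of $F$ forces $|B|\ge\delta n$: otherwise the case $\mu = B$ of Definition~\ref{def4e} gives $FV_{\setminus B}=G$, so $\{F(v_b) : b\notin B\}$ is an $R$-module generating set of $G$ annihilated by the $\sigma$-semilinear map $C(v') : G\to R$, which would force $C(v')=0$, a contradiction. For each $b\in B$ I would extract an index $r_\alpha \in \mathrm{supp}(v')$ with $E(v_{r_\alpha},v_b)\ne 0$, then pigeonhole the resulting $\ge\delta n$ pairs across the $\le|G|^2$ representative rows to produce $i\in[n]\setminus T$ with $d_i \ge \delta n/|G|^2$, contradicting $i\notin T$.

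The main obstacle is the extraction step in the last paragraph. In the fully $R$-bilinear symmetric setting of \cite{Woo17} one has the identity $E(v',v_b)=\sum_\alpha c'_\alpha E(v_{r_\alpha},v_b)$, and nonvanishing of the sum immediately delivers a nonzero summand. In the Hermitian setting the $\sigma$-semilinear component of $E(\cdot,v_b)$ breaks this clean linearity, so the step must be handled by first using $R$-linearity of $C$ to expand $C(v')(F(v_b))=\sum_\alpha c'_\alpha\, C(v_{r_\alpha})(F(v_b))$ and locate some $\alpha$ with $C(v_{r_\alpha})(F(v_b))\ne 0$, and then verifying that the companion term $C(v_b)(F(v_{r_\alpha}))$ in $E(v_{r_\alpha},v_b)=C(v_b)(F(v_{r_\alpha}))+\sigma(C(v_{r_\alpha})(F(v_b)))$ cannot simultaneously conspire across all such $\alpha$ to cancel the $\sigma$-twisted term — this is the one place where the Hermitian case demands care beyond a literal import of Wood's argument.
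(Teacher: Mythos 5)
Your overall architecture --- the heavy-row set $T$, the Markov-type count giving $|T|<\gamma n$, robustness applied with $\nu=T$ to produce $v\in V_{\setminus T}$ with $F(v)=0\neq C(v)$, coalescing along the fibers of $\phi_{F,C}$ to replace $v$ by $v'$ with $|\mathrm{supp}(v')|\le |G|^2$, the code property forcing $|B|\ge\delta n$, and the final pigeonhole --- is exactly the proof the paper intends, namely Wood's proof of \cite[Lemma 3.5]{Woo17} imported verbatim (the only change the paper flags, the diagonal identity $t(\phi_{C,F}(v_i),\phi_{F,C}(v_i))=E(C,F,i,i)$ in place of $2E(C,F,i,i)$, plays no role in this lemma). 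However, your last step is a genuine gap as written: you yourself leave open the ``verification'' that the companion terms cannot conspire, and the route you sketch for it cannot work. Expanding only $C(v')(F(v_b))=\sum_\alpha c'_\alpha C(v_{r_\alpha})(F(v_b))$ and locating one $\alpha$ with $C(v_{r_\alpha})(F(v_b))\neq 0$ gives nothing: for that individual $\alpha$ the companion term $C(v_b)(F(v_{r_\alpha}))$ may perfectly well cancel $\sigma(C(v_{r_\alpha})(F(v_b)))$, so $E(C,F,r_\alpha,b)$ can vanish, and you give no argument excluding this happening for all $\alpha$ simultaneously. So the extraction of a nonzero entry in a representative row, which is the heart of the contradiction, is not established in your proposal.

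The missing point is that the obstruction you perceive does not exist, so no extra care is needed. The map $x\mapsto C(v_b)(F(x))+\sigma(C(x)(F(v_b)))$, whose value at $v_i$ is $E(C,F,i,b)$, is $\sigma$-semilinear in $x$: since $C(v_b)\in\Hom_R({}^{\sigma}G,R)$ one has $C(v_b)(rF(x))=\sigma(r)C(v_b)(F(x))$, while $\sigma(C(rx)(F(v_b)))=\sigma(r)\,\sigma(C(x)(F(v_b)))$; equivalently, $t(\phi_{C,F}(\cdot),\phi_{F,C}(v_b))$ is honestly $R$-linear. Hence $E(v',v_b)=\sum_\alpha \sigma(c'_\alpha)\,E(v_{r_\alpha},v_b)$, and since $F(v')=0$ gives $E(v',v_b)=\sigma(C(v')(F(v_b)))\neq 0$ for every $b\in B$, some summand is nonzero, i.e.\ $E(C,F,r_\alpha,b)\neq 0$ (equivalently $E(C,F,b,r_\alpha)=\sigma(E(C,F,r_\alpha,b))\neq 0$) for some $r_\alpha\in\mathrm{supp}(v')\subset[n]\setminus T$; the pigeonhole over the at most $|G|^2$ representatives then contradicts $r_\alpha\notin T$ exactly as in the symmetric case. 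Semilinearity is just as good as linearity for ``a nonzero sum has a nonzero term,'' which is precisely why the paper can assert that the proof is identical to \cite[Lemma 3.5]{Woo17}.
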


Let $\HH(V)$ be the set of Hermitian pairings on $V=R^n$, i.e. the set of the maps $h : V \times V \rightarrow R$ which are bi-additive, $h(y,x) = \sigma(h(x,y))$ and $h(rx,sy)=r \sigma(s) h(x,y)$ for every $r,s \in R$ and $x, y \in V$. 
Define a map
$$
m_F : \Hom_R(V, ({}^{\sigma}G)^*) \rightarrow \HH(V)
$$
by
$$
m_F(C)(x,y) := C(x)(F(y)) + \sigma(C(y)(F(x))).
$$
It is clear that $m_F(C)$ is bi-additive and $m_F(C)(y,x) = \sigma(m_F(C)(x,y))$. For every $r, s \in R$, we have
\begin{equation*}
\begin{split}
m_F(C)(rx, sy) & = C(rx)(F(sy)) + \sigma(C(sy)(F(rx))) \\
& = rC(x)(\sigma(s) \cdot F(y)) + \sigma(sC(y)(\sigma(r) \cdot F(x))) \\
& = r \sigma(s) C(x)(F(y)) + \sigma(s \sigma(r) C(y)(F(x))) \\
& = r \sigma(s) m_F(C)(x,y)
\end{split}    
\end{equation*}
so $m_F(C)$ is an element of $\HH(V)$.

Let $e_1, \cdots, e_r$ be the canonical generators of an $R$-module $G \cong \prod_{i=1}^{r} R/p^{\lambda_i}R$ and $e_1^*, \cdots, e_r^*$ be generators of $G^*$ given by $e_i^*(\sum_{j}a_je_j) = p^{m - \lambda_i}a_i$ for $a_1, \cdots, a_r \in R$. For an element $F \in \Hom_R(V, G)$, a map $e_i^*(F) \in V^*$ is defined by $v \mapsto e_i^*(Fv)$. Now consider the following elements in $\Hom_R(V, ({}^{\sigma}G)^*)$ for a given $F$. For $\theta := w - \sigma(w)$, we have $\theta \in R^{\times}$ (see Remark \ref{rmk1g}) and $\sigma(\theta) = - \theta$.
\begin{itemize}
    \item For every $i<j$ and $c \in R/p^{\lambda_j}R$, $\displaystyle \alpha_{ij}^{c} := \frac{ce_i^*(F) ({}^{\sigma} e_j^*) - \sigma(c)e_j^*(F) ({}^{\sigma} e_i^*)}{p^{m - \lambda_i}} \in \Hom_R(V, ({}^{\sigma}G)^*)$.
    
    \item For every $i$ and $d \in R_1/p^{\lambda_i}R_1$, $\displaystyle \alpha_i^{d} := \frac{d \theta e_i^*(F) ({}^{\sigma} e_i^*)}{p^{m - \lambda_i}} \in \Hom_R(V, ({}^{\sigma}G)^*)$.
\end{itemize}

The basic properties of the elements $\alpha_{ij}^{c}$ and $\alpha_i^{d}$ are provided in the following lemmas. 

\begin{lemma} \label{lem4g}
The elements $\alpha_{ij}^{c}$ and $\alpha_i^{d}$ are contained in $\Hom_R(V, ({}^{\sigma}G)^*)$.
\end{lemma}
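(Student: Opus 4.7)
The plan is to interpret the formal ``division by $p^{m-\lambda_{i}}$'' in each definition as the cancellation of an explicit $p^{m-\lambda_{i}}$ factor that is already present in the numerator, and then to verify, for each of $\alpha_{ij}^{c}$ and $\alpha_{i}^{d}$, that (i) the resulting map $V \to ({}^{\sigma}G)^{*}$ is well defined modulo the indeterminacies in $c$, $d$ and the lifts, and (ii) it is $R$-linear. The key observation is that under the identification $G \cong \prod_{k} R/p^{\lambda_{k}}R$ with coordinates $F(v) = \sum_{k}(F(v))_{k} e_{k}$, one has
\begin{equation*}
e_{k}^{*}(F(v)) \;=\; p^{m-\lambda_{k}}\,(F(v))_{k},
\end{equation*}
so each numerator is visibly divisible by $p^{m-\lambda_{i}}$; for $\alpha_{ij}^{c}$ this uses also $\lambda_{i} \geq \lambda_{j}$ for $i<j$, which gives $p^{m-\lambda_{j}} = p^{m-\lambda_{i}}\,p^{\lambda_{i}-\lambda_{j}}$.

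After this cancellation the two definitions read
\begin{equation*}
\alpha_{ij}^{c}(v) \;=\; c\,(F(v))_{i}\cdot {}^{\sigma}e_{j}^{*} \;-\; p^{\lambda_{i}-\lambda_{j}}\,\sigma(c)\,(F(v))_{j}\cdot {}^{\sigma}e_{i}^{*}
\end{equation*}
and
\begin{equation*}
\alpha_{i}^{d}(v) \;=\; d\,\theta\,(F(v))_{i}\cdot {}^{\sigma}e_{i}^{*},
\end{equation*}
where the scalar products land in $({}^{\sigma}G)^{*}$ via the $R$-action coming from Lemma~\ref{lem4new1}(1). For well-definedness one checks that altering $c$ by a multiple of $p^{\lambda_{j}}$, altering $d$ by a multiple of $p^{\lambda_{i}}$, or changing the chosen lift of $(F(v))_{k} \in R/p^{\lambda_{k}}R$ to $R$ each inserts a factor killed by $p^{\lambda_{j}}\cdot{}^{\sigma}e_{j}^{*} = 0$ or $p^{\lambda_{i}}\cdot{}^{\sigma}e_{i}^{*} = 0$. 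The inequality $\lambda_{j} \leq \lambda_{i}$ ensures that $p^{\lambda_{i}}$ also annihilates ${}^{\sigma}e_{j}^{*}$, which handles the remaining cases.

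Finally, $R$-linearity is essentially automatic: since $F$ is $R$-linear we have $(F(rv))_{k} = r\,(F(v))_{k}$ for each $k$, and then each of the displayed formulas becomes $r$ times the corresponding expression for $v$. I do not expect any genuine obstacle here; the only mildly delicate point is tracking the conjugate structure and checking that, under the identification ${}^{\sigma}(G^{*}) \cong ({}^{\sigma}G)^{*}$ from Lemma~\ref{lem4new1}(1), the scalar action of $R$ on ${}^{\sigma}e_{k}^{*}$ matches the one implicit in the formulas --- but this is exactly the content of that lemma.
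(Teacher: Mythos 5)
Your proposal is correct and follows essentially the same route as the paper's proof: both rest on the observations that $e_i^*(F(v))$ is divisible by $p^{m-\lambda_i}$ (together with $\lambda_i \geq \lambda_j$ for $i<j$ in the case of $\alpha_{ij}^{c}$), that ${}^{\sigma}e_i^*$ is $p^{\lambda_i}$-torsion so the choices of $c$, $d$ and lifts do not matter, and that $R$-linearity reduces to the $R$-linearity of $F$ and the module structure on $({}^{\sigma}G)^*$ from Lemma \ref{lem4new1}. Your coordinate-wise cancellation is just a more explicit rendering of the paper's statement that $e_i^*(Fv)/p^{m-\lambda_i} \in R$, so no substantive difference.
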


\begin{proof}
Since $\displaystyle \frac{e_i^*(Fv)}{p^{m - \lambda_i}} \in R$ for every $v \in V$, $\alpha_{i}^{d} \in \Hom_{\Z}(V, ({}^{\sigma}G)^*)$ for $d \in R_1$. Also, ${}^{\sigma}e_i^* \in ({}^{\sigma}G)^*$ is a $p^{\lambda_i}$-torsion element so $\alpha_{i}^{d} \in \Hom_{\Z}(V, ({}^{\sigma}G)^*)$ for $d \in R_1 / p^{\lambda_i} R_1$ is well-defined. Now we prove that the map $\alpha_{i}^{d}$ is $R$-linear. For every $r \in R$, $v \in V$ and $w \in {}^{\sigma}G$, 
$$
\alpha_i^{d}(rv)(w) 
= \frac{d \theta e_i^*(F(rv)) \sigma(e_i^*(w))}{p^{m - \lambda_i}}
= r \frac{d \theta e_i^*(Fv) \sigma(e_i^*(w))}{p^{m - \lambda_i}}
$$
and
$$
(r \cdot \alpha_i^{d}(v))(w) 
= (\alpha_i^{d}(v))(\sigma(r) w) 
= \frac{d \theta e_i^*(Fv) \sigma(e_i^*(\sigma(r)w))}{p^{m - \lambda_i}}
= r \frac{d \theta e_i^*(Fv) \sigma(e_i^*(w))}{p^{m - \lambda_i}}
$$
so $\alpha_i^{d}$ is $R$-linear. The $R$-linearity of $\alpha_{ij}^{c}$ can be proved by the same way.
\end{proof}

\begin{lemma} \label{lem4h}
The elements $\alpha_{ij}^{c}$ and $\alpha_i^{d}$ are contained in $\ker(m_F)$.
\end{lemma}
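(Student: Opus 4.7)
The plan is to verify both claims by a direct calculation of $m_F(C)(x,y)$ for $C = \alpha_{ij}^{c}$ and $C = \alpha_i^{d}$. Throughout I would use two bookkeeping identities repeatedly: if $F(v) = \sum_k c_k e_k$ with $c_k \in R/p^{\lambda_k}R$, then $e_k^*(F(v)) = p^{m-\lambda_k} c_k$; and for any $g \in G$ one has $({}^{\sigma} e_j^*)(g) = \sigma(e_j^*(g))$. So I would fix $x, y \in V$, write $F(x) = \sum_k a_k e_k$ and $F(y) = \sum_k b_k e_k$, substitute into the defining formulas for $\alpha_{ij}^{c}$ and $\alpha_i^{d}$, and separately expand $C(x)(F(y))$ and $\sigma(C(y)(F(x)))$.

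For $\alpha_{ij}^{c}$ with $i<j$, the first term of $\alpha_{ij}^{c}(x)$ evaluated at $F(y)$ gives $c\, a_i \cdot p^{m-\lambda_j} \sigma(b_j)$, and the second gives $-\sigma(c)\, p^{\lambda_i-\lambda_j} a_j \cdot p^{m-\lambda_i} \sigma(b_i) = -\sigma(c)\, p^{m-\lambda_j} a_j \sigma(b_i)$ after combining the $p$-powers (legal since $\lambda_i \geq \lambda_j$). Applying the same expansion to $C(y)(F(x))$, taking $\sigma$, and adding produces the two opposite-sign terms $\sigma(c)\, p^{m-\lambda_j} \sigma(b_i) a_j$ and $-c\, p^{m-\lambda_j} \sigma(b_j) a_i$, and the four summands cancel in pairs by commutativity of $R$.

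For $\alpha_i^{d}$ the same procedure collapses $m_F(\alpha_i^{d})(x,y)$ to $d\theta\, a_i \cdot p^{m-\lambda_i}\sigma(b_i) \; + \; \sigma\!\bigl(d\theta\, b_i \cdot p^{m-\lambda_i}\sigma(a_i)\bigr)$. Here the decisive inputs are $\sigma(d) = d$, which holds because $d \in R_1$ is fixed by $\sigma$, and $\sigma(\theta) = -\theta$ for $\theta = w - \sigma(w)$ (stated just before the definition of $\alpha_i^{d}$). These two identities turn the second summand into $-d\theta\, a_i\, p^{m-\lambda_i}\sigma(b_i)$, which exactly cancels the first. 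The main obstacle is purely notational: one must carefully distinguish the conjugations intrinsic to the identification ${}^{\sigma}(G^*) \cong ({}^{\sigma}G)^*$ of Lemma \ref{lem4new1} from those produced by the Hermitian symmetrization defining $m_F$, since the two interact nontrivially in the $p^{m-\lambda_i}$ denominator. Once this bookkeeping is threaded through consistently, each claim reduces to a single cancellation.
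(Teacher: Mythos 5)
Your proposal is correct and follows essentially the same route as the paper's proof: a direct expansion of $m_F(\alpha_{ij}^{c})(x,y)$ and $m_F(\alpha_i^{d})(x,y)$, with cancellation coming from the antisymmetric construction of $\alpha_{ij}^{c}$ and from $\sigma(d)=d$, $\sigma(\theta)=-\theta$ for $\alpha_i^{d}$. The only difference is that you expand $F(x)$, $F(y)$ in the coordinates $e_k$, whereas the paper cancels the expressions symbolically in terms of $e_i^*(Fx)$, $e_j^*(Fy)$; this is an inessential variation.
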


\begin{proof}
For every $x, y \in V$, we have
\begin{equation*}
\begin{split}
m_F(\alpha_{ij}^{c})(x, y)
& = \alpha_{ij}^{c}(x)(F(y)) + \sigma(\alpha_{ij}^{c}(y)(F(x))) \\
& = \frac{ce_i^*(Fx) \sigma(e_j^*(Fy)) - \sigma(c)e_j^*(Fx) \sigma(e_i^*(Fy))}{p^{m - \lambda_i}} \\
& + \sigma \left ( \frac{ce_i^*(Fy) \sigma(e_j^*(Fx)) - \sigma(c)e_j^*(Fy) \sigma(e_i^*(Fx))}{p^{m - \lambda_i}} \right ) \\
& = 0
\end{split}    
\end{equation*}
and
\begin{equation*}
\begin{split}
m_F(\alpha_i^{d})(x, y)
& = \alpha_i^{d}(x)(F(y)) + \sigma(\alpha_i^{d}(y)(F(x))) \\
& = \frac{d \theta e_i^*(Fx) \sigma(e_i^*(Fy))}{p^{m - \lambda_i}} + \sigma \left ( \frac{d \theta e_i^*(Fy) \sigma(e_i^*(Fx))}{p^{m - \lambda_i}} \right ) \\
& = 0,
\end{split}    
\end{equation*}
where the last equality follows from the facts that $\sigma(d) = d$ and $\sigma(\theta) = - \theta$.
\end{proof}

\begin{lemma} \label{lem4i}
If $FV = G$, then $\sum_{i < j} \alpha_{ij}^{c_{ij}} + \sum_{i} \alpha_i^{d_i} = 0$ if and only if each $c_{ij}$ and $d_i$ is zero. 
\end{lemma}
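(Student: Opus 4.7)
The reverse direction is immediate, so the plan focuses on the forward direction. The key idea is to use surjectivity of $F$ to pick preimages of the generators $e_1, \ldots, e_r$ of $G$, and then evaluate the relation $\sum_{i<j}\alpha_{ij}^{c_{ij}} + \sum_i \alpha_i^{d_i} = 0$ at carefully chosen pairs to isolate each coefficient one at a time.

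Concretely, since $F V = G$, I first choose $v_1, \ldots, v_r \in V$ with $F(v_k) = e_k$. For each $k, l \in \{1, \ldots, r\}$ I then plug $v_k$ into the relation and evaluate the resulting element of $({}^{\sigma}G)^*$ at $e_l \in {}^{\sigma}G$. Using the defining identity $e_i^*(e_k) = p^{m - \lambda_i}\delta_{ik}$, together with $\sigma(p^{m-\lambda_j}\delta_{jl}) = p^{m-\lambda_j}\delta_{jl}$, the value of $\alpha_{i'j'}^{c_{i'j'}}(v_k)(e_l)$ collapses to
\[
c_{i'j'} p^{m - \lambda_{j'}} \delta_{i'k}\delta_{j'l} - \sigma(c_{i'j'}) p^{m - \lambda_{j'}} \delta_{j'k}\delta_{i'l},
\]
so only the single pair $(i',j') = (\min(k,l), \max(k,l))$ can contribute (and only when $k \neq l$). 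Similarly $\alpha_{i'}^{d_{i'}}(v_k)(e_l)$ reduces to $d_{i'}\theta p^{m-\lambda_{i'}}\delta_{i'k}\delta_{i'l}$, which is nonzero only when $i' = k = l$.

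Now I split into three cases. If $k < l$, the equation at $(v_k, e_l)$ becomes $c_{kl} p^{m-\lambda_l} = 0$ in $R = \OO/p^m\OO$; since $c_{kl}$ lives in $R/p^{\lambda_l}R$ (using $\lambda_k \geq \lambda_l$ because $\lambda$ is a partition), this forces $c_{kl} = 0$. If $k > l$, the equation becomes $-\sigma(c_{lk}) p^{m - \lambda_k} = 0$, and since $\sigma$ is a ring automorphism of $R$ and $c_{lk} \in R/p^{\lambda_k}R$, this gives $c_{lk} = 0$. Finally if $k = l$, only the diagonal $\alpha$ contributes, yielding $d_k \theta p^{m - \lambda_k} = 0$; since $\theta = w - \sigma(w) \in R^\times$ (as noted in Remark \ref{rmk1g}) and $d_k \in R_1/p^{\lambda_k}R_1$, this forces $d_k = 0$.

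The argument is almost pure bookkeeping once the evaluation map is unpacked; the only subtlety is keeping straight the interplay between $\sigma$, the ${}^{\sigma}(-)$ operation, and the dualities in Lemma \ref{lem4new1}, and making sure the powers of $p$ in the denominators cancel correctly against $e_i^*(e_k)$. I do not expect any conceptual obstacle beyond this.
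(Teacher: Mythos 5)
Your proposal is correct and is essentially the paper's own argument: both proofs pick preimages of the generators $e_k$ under $F$, evaluate the vanishing sum at these preimages and then at the generators of $G$, and use $e_i^*(e_k)=p^{m-\lambda_i}\delta_{ik}$ together with the invertibility of $\theta$ to force each $c_{ij}$ and $d_i$ to vanish. The only differences are cosmetic (your redundant $k>l$ case and the reuse of the symbols $v_1,\dots,v_r$, which already denote the standard basis of $V$).
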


\begin{proof}
Assume that $\alpha := \sum_{i < j} \alpha_{ij}^{c_{ij}} + \sum_{i} \alpha_i^{d_i}$ is zero. Choose $x_1, \cdots, x_r \in V$ such that $Fx_i = e_i$ for each $i$. For every $1 \leq t \leq r$, we have
\begin{equation*}
\begin{split}
\alpha (x_t)
& = \sum_{i < t} \alpha_{it}^{c_{it}}(x_t) + \sum_{t < j} \alpha_{tj}^{c_{tj}}(x_t) + \alpha_t^{d_t}(x_t) \\
& = - \sum_{i < t} \sigma(c_{it})p^{\lambda_i - \lambda_t}  ({}^{\sigma}e_i^*) + d_t \theta ({}^{\sigma}e_t^*) + \sum_{t < j} c_{tj} ({}^{\sigma}e_j^*) \\
& = 0.
\end{split}    
\end{equation*}
For all $t < j$, $\alpha(x_t)(e_j) = p^{m-\lambda_j} c_{tj} = 0$ (in $R$) so $c_{tj}=0$ (in $R/p^{\lambda_j}R$). Since $\theta$ is a unit in $R$, the relation $\alpha(x_t)(e_t) = p^{m-\lambda_t} d_t \theta =0$ (in $R$) implies that $d_t=0$ (in $R_1/p^{\lambda_t}R_1$) for all $t$.
\end{proof}

\begin{definition} \label{def4j}
An element $C \in \Hom_R(V, ({}^{\sigma}G)^*)$ is called \textit{special} if $C = \sum_{i < j} \alpha_{ij}^{c_{ij}} + \sum_{i} \alpha_i^{d_i}$ for some $c_{ij} \in R/p^{\lambda_j}R$ and $d_i \in R_1/p^{\lambda_i}R_1$. 
\end{definition}

For a special $C$, we have $E(C, F, i, j) = m_F(C)(v_j, v_i) = 0$ for every $i \leq j$ by Lemma \ref{lem4h} so $p_F(C)=1$. The following corollary (of Lemma \ref{lem4i}) tells us that the number of special $C$ coincides with the moment appears in Theorem \ref{thm2g}(1).

\begin{corollary} \label{cor4new2}
If $FV=G$, then the number of special $C \in \Hom_R(V, ({}^{\sigma}G)^*)$ is
$$
\prod_{i<j} \left| R/p^{\lambda_j}R \right| \times \prod_{i} \left| R_1/p^{\lambda_i}R_1 \right| = p^{\sum_{i=1}^{r} (2i-1) \lambda_i}.
$$
\end{corollary}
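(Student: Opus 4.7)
My plan is to view the set of special elements as the image of an additive parametrization map and invoke Lemma \ref{lem4i} to turn the count of special $C$ into the count of parameter tuples. Concretely, consider the map
\begin{equation*}
\Phi : \prod_{1 \leq i < j \leq r} R/p^{\lambda_j}R \times \prod_{i=1}^{r} R_1/p^{\lambda_i}R_1 \rightarrow \Hom_R(V, ({}^{\sigma}G)^*)
\end{equation*}
sending $((c_{ij}), (d_i))$ to $\sum_{i<j} \alpha_{ij}^{c_{ij}} + \sum_{i} \alpha_i^{d_i}$. By construction, the image of $\Phi$ is precisely the set of special elements, so the task is to verify that $\Phi$ is injective and then count the domain.

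Injectivity is immediate from Lemma \ref{lem4i}: the map $\Phi$ is $R_1$-linear (in fact additive in each $c_{ij}$ and $d_i$), so it suffices to show its kernel is trivial, and this is exactly the content of Lemma \ref{lem4i} under the hypothesis $FV = G$ (which is needed to produce the lifts $x_1, \ldots, x_r \in V$ with $Fx_i = e_i$ used there). This is the only place the surjectivity assumption enters, and it is genuinely needed because without it the various $e_i^*(F)$ could have dependencies that make different tuples collapse to the same $C$.

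With injectivity in hand the count is purely bookkeeping. Since $R = \OO/p^m\OO$ is free of rank $2$ over $R_1 = \Z/p^m\Z$ (using the $\Z_p$-basis $1, w$ of $\OO$), the quotient $R/p^{\lambda_j}R$ is free of rank $2$ over $\Z/p^{\lambda_j}\Z$, so $|R/p^{\lambda_j}R| = p^{2\lambda_j}$, while $|R_1/p^{\lambda_i}R_1| = p^{\lambda_i}$. Therefore
\begin{equation*}
\# \{\text{special } C\} = \prod_{i<j} p^{2\lambda_j} \cdot \prod_{i} p^{\lambda_i} = p^{\sum_{j=1}^{r} 2(j-1)\lambda_j + \sum_{i=1}^{r} \lambda_i} = p^{\sum_{i=1}^{r}(2i-1)\lambda_i},
\end{equation*}
which is the asserted formula. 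No step here should be a real obstacle; the only substantive ingredient is Lemma \ref{lem4i}, and the rest is dimension counting.
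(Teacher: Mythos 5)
Your argument is correct and is essentially the paper's own: the paper states this as an immediate corollary of Lemma \ref{lem4i}, i.e.\ distinct parameter tuples $(c_{ij}, d_i)$ yield distinct special $C$ (using additivity of $c \mapsto \alpha_{ij}^{c}$ and $d \mapsto \alpha_i^{d}$ together with the trivial-kernel statement of the lemma, which requires $FV = G$), and then one counts $\left| R/p^{\lambda_j}R \right| = p^{2\lambda_j}$ and $\left| R_1/p^{\lambda_i}R_1 \right| = p^{\lambda_i}$ exactly as you do. Your bookkeeping of the exponent $\sum_{i=1}^{r}(2i-1)\lambda_i$ is also correct.
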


The next proposition, which is an analogue of \cite[Lemma 3.7]{Woo17}, tells us that if $C$ is not special, it is not even close to $\ker(m_F)$.

\begin{proposition} \label{prop4k}
If $F \in \Hom_R(V, G)$ is a code of distance $\delta n$ and $C \in \Hom_R(V, ({}^{\sigma}G)^*)$ is not special, then
$$
\# \left\{ (i, j) : i \leq j \text{ and } E(C, F, i, j) \neq 0 \right\} \geq \frac{\delta n}{2}.
$$
\end{proposition}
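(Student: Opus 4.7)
The plan is to argue by contrapositive: assuming $\#\{(i,j) : i \leq j,\, E(C,F,i,j) \neq 0\} < \delta n/2$, I will construct explicitly the special element that $C$ must equal. First, let $\nu \subseteq [n]$ consist of every index appearing in at least one non-zero pair $(i,j)$ with $i \leq j$; each such pair contributes at most two indices, so $|\nu| < \delta n$, and the code hypothesis yields $F(V_{\setminus \nu}) = G$.

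The key vanishing statement is that $m_F(C)(v, w) = 0$ for every $v \in V_{\setminus \nu}$ and every $w \in V$. By construction of $\nu$, $E(C,F,i,j) = m_F(C)(v_j, v_i) = 0$ whenever $(i,j) \notin \nu \times \nu$ with $i \leq j$; the Hermitian identity $m_F(C)(x,y) = \sigma(m_F(C)(y,x))$ extends this to $m_F(C)(v_i, v_j) = 0$ whenever at least one of $i, j$ lies outside $\nu$, and the bilinearity property $m_F(C)(rx, sy) = r\sigma(s)\,m_F(C)(x,y)$ upgrades this to the full claim.

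Next I construct the candidate special $C'$. Pick lifts $y_1, \ldots, y_r \in V_{\setminus \nu}$ with $F(y_t) = e_t$ and expand $C(y_t) = \sum_s a_{ts}^{(t)}\, {}^{\sigma} e_s^*$ with $a_{ts}^{(t)} \in R/p^{\lambda_s}R$. The identities $m_F(C)(y_t, y_u) = 0$ together with $e_s^*(e_u) = p^{m-\lambda_s}\delta_{su}$ produce the relations
\begin{equation*}
\tr(a_{tt}^{(t)}) \equiv 0 \pmod{p^{\lambda_t}}, \qquad a_{ut}^{(u)} \equiv -\sigma(a_{tu}^{(t)})\, p^{\lambda_t - \lambda_u} \pmod{p^{\lambda_t}} \quad (t < u).
\end{equation*}
Since $\sigma(\theta) = -\theta$ and $\{x \in R/p^{\lambda_t}R : \tr(x) = 0\} = (R_1/p^{\lambda_t}R_1)\cdot\theta$, the first relation furnishes a unique $d_t \in R_1/p^{\lambda_t}R_1$ with $a_{tt}^{(t)} = d_t\theta$. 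Setting $c_{tj} := a_{tj}^{(t)}$ for $t < j$ and $C' := \sum_{i<j} \alpha_{ij}^{c_{ij}} + \sum_i \alpha_i^{d_i}$, a direct evaluation (in which the second compatibility relation is precisely what matches the coefficient of ${}^{\sigma}e_i^*$ for $i < t$) shows $C'(y_t) = C(y_t)$ for every $t$.

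Finally, I promote this pointwise agreement to $C = C'$ on all of $V$. For any $w \in V$, the vanishing $m_F(C)(y_t, w) = 0$ forces $C(w)(e_t) = -\sigma(C(y_t)(F(w)))$, and the same identity holds for $C'$ because $m_F(C') \equiv 0$ by Lemma \ref{lem4h}. Combined with $C(y_t) = C'(y_t)$, this gives $C(w)(e_t) = C'(w)(e_t)$ for every $t$; as the $e_t$ generate $G$, this forces $C(w) = C'(w)$ in $({}^{\sigma}G)^*$, so $C = C'$ is special. The main obstacle is the compatibility check in the previous paragraph: one must verify that the trace-zero and conjugation constraints imposed by $m_F(C)(y_t, y_u) = 0$ produce coefficients $d_t, c_{tj}$ of exactly the right type, which is where the conjugation $\sigma$ (absent in the symmetric case of \cite{Woo17}) genuinely enters.
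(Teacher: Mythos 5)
Your proof is correct, and it takes a genuinely different route from the paper. Both arguments start the same way: from $\#\{(i,j): i\le j,\ E(C,F,i,j)\neq 0\} < \delta n/2$ you get $|\nu| < \delta n$, the code hypothesis gives $FV_{\setminus\nu}=G$, and sesquilinearity plus Hermitian symmetry give the vanishing of $m_F(C)$ on $V_{\setminus\nu}\times V$. From there the paper argues by counting: it passes to the quotient pairing modules $\HH_1(V)$, $\HH_2(V)$, bounds $\left|\im(m_F^1)\right|$ and $\left|\im(m_F^2)\right|$ from below using the elements $c z_{\tau_a}^*\otimes {}^{\sigma}e_b^*$, and concludes that $\ker(m_F^1)$ has exactly the cardinality of the set of special $C$ (this uses Lemma \ref{lem4i} and Corollary \ref{cor4new2} for the count), hence consists of special elements. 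You instead construct the special representation of $C$ explicitly: the relations $m_F(C)(y_t,y_u)=0$ give exactly $\tr(a_{tt}^{(t)})\equiv 0$ and $a_{ut}^{(u)}\equiv -\sigma(a_{tu}^{(t)})p^{\lambda_t-\lambda_u}$, which are precisely the compatibilities needed so that $C':=\sum_{i<j}\alpha_{ij}^{c_{ij}}+\sum_i\alpha_i^{d_i}$ with $c_{tj}=a_{tj}^{(t)}$, $a_{tt}^{(t)}=d_t\theta$ satisfies $C'(y_t)=C(y_t)$; and the identity $C(w)(e_t)=-\sigma(C(y_t)(F(w)))$, valid for both $C$ and $C'$ since $m_F(C')=0$, propagates the agreement to all of $V$. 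Your identification of the trace-zero elements of $R/p^{\lambda_t}R$ with $(R_1/p^{\lambda_t}R_1)\theta$ does need the small case check at $p=2$ (where $\tr(w)=-1$ is a unit and $\theta=1+2w$), but it holds. What each approach buys: yours is more elementary and self-contained — it avoids the auxiliary quotients $\HH_1(V),\HH_2(V)$ and the image-cardinality bookkeeping, does not invoke Lemma \ref{lem4i} inside this proof, and makes visible exactly where the Hermitian structure enters (trace-zero forces a multiple of $\theta$); the paper's counting argument stays parallel to Wood's symmetric-case proof, only needs vanishing on pairs meeting a set $\tau$ of size $r$ rather than all of $V_{\setminus\nu}$, and transfers with minimal change to the ramified case of Proposition \ref{prop5g}, where the analogous explicit solving would require separate type I/II computations.
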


\begin{proof}
Let $\eta := \left\{ (i, j) : i\leq j \text{ and } E(C, F, i, j) \neq 0 \right\}$ and $\nu$ be the set of all $i$ and $j$ that appear in an $(i, j) \in \eta$. Suppose that there exists a non-special $C$ such that $\left| \eta \right| < \frac{\delta n}{2}$ (so $\left| \nu \right| < \delta n$). Since $F$ is a code of distance $\delta n$, we can find $\tau \subset [n] \setminus \nu$ such that $\left| \tau \right| = r$ and $FV_{\tau} = G$. 

For $w_i \in V_{\tau}$ such that $Fw_i = e_i$, one can show that $w_1, \cdots, w_r$ form an $R$-basis of $V_{\tau}$ as in the proof of \cite[Lemma 3.6]{Woo17}. 
Let $\tau = \left\{ \tau_1, \cdots, \tau_r \right\}$ ($\tau_1 < \cdots < \tau_r$) and define
$$
z_i := \left\{\begin{matrix}
v_i & (i \notin \tau) \\
w_j & (i = \tau_j \in \tau)
\end{matrix}\right. .
$$
Then $z_1, \cdots, z_n$ is a basis of $V$. Denote its dual basis by $z_1^*, \cdots, z_n^*$.

Let $Z_1(V)$ and $Z_2(V)$ be (additive) subgroups of $\HH(V)$ defined by
\begin{equation*}
\begin{split}
Z_1(V) & := \left\{ f \in \HH(V) : f(x,y)=0 \text{ for all } x \in V, y \in V_{\tau} \right\}, \\
Z_2(V) & := \left\{ f \in \HH(V) : f(x,y)=0 \text{ for all } x, y \in V_{\tau} \right\} \supset Z_1(V)
\end{split}
\end{equation*}
and let $\HH_i(V) := \HH(V) / Z_i(V)$ for $i= 1, 2$. Also the map $m_F^i : \Hom_R(V, ({}^{\sigma}G)^*) \rightarrow \HH_i(V)$ is defined by the composition of $m_F$ and the projection $\HH(V) \rightarrow \HH_i(V)$. If $i \in \tau$ or $j \in \tau$, then $m_F(C)(v_j, v_i)=E(C, F, i, j)=0$. This shows that $m_F(C) \in Z_1(V)$ and $m_F^{1}(C)=0$. To prove the contradiction, it is enough to show that every $C \in \ker(m_F^1)$ is special, or equivalently the inequality
\begin{equation} \label{eq4e}
\left| \im(m_F^1) \right| 
= \frac{\left| \Hom_R(V, ({}^{\sigma}G)^*) \right|}{\left| \ker(m_F^1) \right|}
\leq \frac{\left| \Hom_R(V, ({}^{\sigma}G)^*) \right|}{\left| \left\{ \text{special } C \right\} \right|}
= p^{\sum_{i=1}^{r} (2n-2i+1) \lambda_i}
\end{equation}
is actually an equality. 

\begin{itemize}
    \item For each $1 \leq i < j \leq n$ and $c \in R$, there exists $f_{ij}^{c} = f_{ji}^{\sigma(c)} \in \HH(V)$ such that 
    $$
    f_{ij}^{c}(z_{i'}, z_{j'}) = \left\{\begin{matrix}
c & ((i', j')=(i,j)) \\
\sigma(c) & ((i', j')=(j, i)) \\
0 & (\text{otherwise})
\end{matrix}\right.
$$ 
for every $i'$ and $j'$. Similarly, for each $1 \leq i \leq n$ and $d \in R_1$, there exists $g_i^{d} \in \HH(V)$ such that 
$$
g_i^d (z_{i'}, z_{j'}) = \left\{\begin{matrix}
d & ((i', j')=(i,i)) \\
0 & (\text{otherwise})
\end{matrix}\right. .
$$

\item Since $V$ is a free $R$-module, the natural map $\Hom_R(V, R) \otimes ({}^{\sigma}G)^* \rightarrow \Hom_R(V, ({}^{\sigma}G)^*)$ is an isomorphism. Thus for $c \in R$ and $1 \leq b \leq a \leq r$, $c z_{\tau_a}^* \otimes {}^{\sigma}e_b^*$ is an element of $\Hom_R(V, ({}^{\sigma}G)^*)$ and
\begin{equation*}
\begin{split}
m_F(c z_{\tau_a}^* \otimes {}^{\sigma}e_b^* )(z_{\tau_i}, z_{\tau_j})
& = (c \delta_{ai} {}^{\sigma}e_b^*)({}^{\sigma} e_j) + \sigma((c \delta_{aj} {}^{\sigma}e_b^*)({}^{\sigma} e_i)) \\
& = p^{m - \lambda_b} (c \delta_{ai} \delta_{b j} + \sigma(c) \delta_{aj} \delta_{bi})
\end{split}    
\end{equation*}
so
$$
m_F^2(c z_{\tau_a}^* \otimes {}^{\sigma}e_b^* ) = \left\{\begin{matrix}
p^{m - \lambda_b}f_{\tau_b \tau_a}^{\sigma(c)} & (b<a) \\
p^{m - \lambda_b} g_{\tau_b}^{c + \sigma(c)} & (b=a)
\end{matrix}\right.
$$
as elements in $\HH_2(V)$. We also have that each element of $R_1$ can be expressed by $c + \sigma(c)$ for some $c \in R$. (For $c = x+wy$ ($x, y \in R_1$), we have $c + \sigma(c) = 2x+(w+w^p)y$. When $p$ is odd, each element of $R_1$ is of the form $2x$ for some $x \in R_1$. When $p=2$, we have $w+w^2=-1$ so $wy+\sigma(wy) = -y$ for every $y \in R_1$.) The image of $m_F^2$ contains every element of $\HH_2(V)$ of the form 
$$
f = \sum_{i<j} f_{\tau_i \tau_j}^{c_{ij}} + \sum_{i} g_{\tau_i}^{d_i}
$$ 
where $c_{ij} \in p^{m - \lambda_i} R$ and $d_i \in p^{m - \lambda_i} R_1$. As in the proof of Lemma \ref{lem4i}, one can deduce that 
$$
\sum_{i<j} f_{\tau_i \tau_j}^{c_{ij}} + \sum_{i} g_{\tau_i}^{d_i} = 0
$$ 
if and only if each $c_{ij}$ and $d_i$ is zero. This implies that the number of $f$ in $\HH_2(V)$ of the form $\sum_{i<j} f_{\tau_i \tau_j}^{c_{ij}} + \sum_{i} g_{\tau_i}^{d_i}$ is 
$$
\prod_{i<j} p^{2 \lambda_i} \cdot \prod_{i} p^{\lambda_i} = p^{\sum_{i=1}^{r} (2r-2i+1) \lambda_i}.
$$
Thus we have
\begin{equation} \label{eq4f}
\left| \im(m_F^2) \right| \geq p^{\sum_{i=1}^{r} (2r-2i+1) \lambda_i}.
\end{equation}

\item For $\ell \notin \tau$, $c \in R$ and $1 \leq b \leq r$,
\begin{equation*}
m_F(c z_{\ell}^* \otimes {}^{\sigma}e_b^*)(z_i, z_{\tau_j})
= cp^{m - \lambda_b} \delta_{\ell i} \delta_{bj}
\end{equation*}
so
$$
m_F^1(c z_{\ell}^* \otimes {}^{\sigma}e_b^*)
= f_{\ell \tau_b}^{p^{m - \lambda_b} c}
$$
as elements in $\HH_1(V)$. Let $S$ be the set of the elements of $\HH_1(V)$ of the form 
$$
f = \sum_{\ell \notin \tau} \sum_{b} f_{\ell \tau_b}^{c_{\ell b}}
$$
for some $c_{\ell b} \in p^{m-\lambda_b} R$. Then $\left| S \right| = p^{\sum_{i=1}^{r} 2(n-r) \lambda_i}$ (since $\sum_{\ell \notin \tau} \sum_{b} f_{\ell \tau_b}^{c_{\ell b}} = 0$ if and only if each $c_{\ell b}$ is zero) and $S$ is contained in the kernel of the surjective homomorphism $\im(m_F^1) \rightarrow \im(m_F^2)$, which implies that
\begin{equation} \label{eq4g}
\left| \im(m_F^1) \right| \geq p^{\sum_{i=1}^{r} 2(n-r) \lambda_i} \left| \im(m_F^2) \right|.
\end{equation}
\end{itemize}
By the equations (\ref{eq4f}) and (\ref{eq4g}), the inequality (\ref{eq4e}) should be an equality. 
\end{proof}

Now we compute the moments of the cokernel of an $\varepsilon$-balanced matrix $X \in \HH_n(\OO)$. Although the proof follows the strategy of the proof of \cite[Theorem 6.1]{Woo17}, we provide some details of the proof for the convenience of the readers. For an $\OO$-module $G$ of type $\lambda = (\lambda_1 \geq \cdots \geq \lambda_r)$, let $M_G$ be the number of special $C$ for a surjective $F \in \Hom_R(V, G)$, i.e. $M_G := p^{\sum_{i=1}^{r} (2i-1) \lambda_i}$.

\begin{lemma} \label{lem4x1}
For given $0 < \varepsilon < 1$, $\delta >0$ and $G$, there are $c, K_0 > 0$ such that the following holds: Let $X \in \HH_n(R)$ be an $\varepsilon$-balanced matrix, $F \in \Hom_R(V, G)$ be a code of distance $\delta n$ and $A \in \Hom_R((^{\sigma}V)^*, G)$. Then for each $n$, 
$$
\left| \PP(FX=0) - M_G \left| G \right|^{-n} \right|
\leq \displaystyle \frac{K_0 e^{-cn}}{\left| G \right|^n}
$$
and
$$
\PP(FX=A) \leq K_0 \left| G \right|^{-n}.
$$
\end{lemma}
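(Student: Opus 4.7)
The plan is to expand $\PP(FX = 0)$ via the Fourier-type formula of equation (\ref{eq4d}) and partition the sum over $C \in \Hom_R(V,({}^{\sigma}G)^*)$ into three classes: (i) special $C$, (ii) non-special $\gamma$-robust $C$, and (iii) $\gamma$-weak $C$. For class (i), Lemma \ref{lem4h} gives $E(C,F,i,j) = 0$ for all $i \leq j$, so $p_F(C) = 1$, and by Corollary \ref{cor4new2} there are exactly $M_G$ such $C$, contributing precisely $M_G \left| G \right|^{-n}$ to $\PP(FX=0)$. The task is then to show the contributions from (ii) and (iii) are bounded by $K_0 e^{-cn}\left| G \right|^{-n}$.

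For class (ii), Lemma \ref{lem4f2} yields at least $\gamma\delta n^2/(2\left| G \right|^2)$ nonzero coefficients $E(C,F,i,j)$, so by Remark \ref{rmk4c} one has $\left| p_F(C) \right| \leq \exp(-\varepsilon\gamma\delta n^2/(2\left| G \right|^2 p^{2m}))$. Since the number of such $C$ is at most $\left| \Hom_R(V,({}^{\sigma}G)^*) \right| = \left| G \right|^n$, their total contribution to $\left| G \right|^{-n}\sum_C p_F(C)$ is super-exponentially small in $n$. For class (iii), Lemma \ref{lem4f1} bounds the number of $\gamma$-weak $C$ by $C_G \binom{n}{\lceil\gamma n\rceil-1}\left| G \right|^{\gamma n}$, while Proposition \ref{prop4k} combined with Remark \ref{rmk4c} gives $\left| p_F(C) \right| \leq \exp(-\varepsilon\delta n/(2p^{2m}))$ for every non-special $C$. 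Using $\binom{n}{\lceil\gamma n\rceil - 1} \leq 2^{H(\gamma)n}$ with $H$ the binary entropy, and choosing $\gamma > 0$ small enough, depending only on $\varepsilon,\delta,\left| G \right|$, so that $H(\gamma)\log 2 + \gamma \log \left| G \right| < \varepsilon\delta/(2p^{2m})$ (possible because both terms vanish as $\gamma \to 0$), the contribution from class (iii) is $O(e^{-cn})$ as required.

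For the second bound $\PP(FX=A) \leq K_0 \left| G \right|^{-n}$, I apply the analogous Fourier identity to the indicator of $FX - A = 0$, which gives
\[
\PP(FX = A) = \frac{1}{\left| G \right|^n}\sum_C \zeta^{-\tr(C(A))}\EE(\zeta^{\tr(C(FX))}).
\]
The triangle inequality and the same three-class decomposition bound the right-hand side by $(M_G + O(e^{-cn}))\left| G \right|^{-n}$: each term from class (i) has modulus one, and the estimates from (ii) and (iii) depend only on $\left| p_F(C) \right|$ and hence apply verbatim. Taking $K_0 := M_G + K_1$ with $K_1$ absorbing the exponential tail yields the claimed bound.

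The main obstacle is the calibration of $\gamma$: we need the combinatorial count of $\gamma$-weak $C$, which grows like $\left| G \right|^{\gamma n}$ times an entropy factor, to be defeated by the single-$C$ decay $\exp(-\varepsilon\delta n/(2p^{2m}))$ valid for all non-special $C$. The Hermitian setting introduces the conjugate structure and the unit factor $\theta = w - \sigma(w) \in R^{\times}$ inside the generators $\alpha_i^d$ of the special subspace, together with a mix of linear and conjugate-linear entries in $C$, but these have already been absorbed into Lemmas \ref{lem4g}--\ref{lem4i} and Proposition \ref{prop4k}, so no further estimates beyond the Wood-type framework are needed.
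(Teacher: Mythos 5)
Your proposal is correct and follows essentially the same route as the paper: the Fourier expansion of $1_{FX=A}$, the split of the $C$-sum into special, non-special weak, and robust classes, the counts from Lemmas \ref{lem4f1}--\ref{lem4f2} and Corollary \ref{cor4new2} paired with the character-sum decay of Remark \ref{rmk4c} and Proposition \ref{prop4k}, and the triangle-inequality treatment of general $A$; the only difference is that you spell out the entropy calibration of $\gamma$ that the paper delegates to \cite[Lemma 4.1]{Woo17}. One presentational slip: your class (iii) should be read as the \emph{non-special} $\gamma$-weak $C$ (special $C$ are themselves $\gamma$-weak), exactly as in the paper's $S_2$, since the bound $\left| p_F(C) \right| \leq \exp(-\varepsilon\delta n/(2p^{2m}))$ you invoke there is only valid for non-special $C$.
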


\begin{proof}
By the equations (\ref{eq4b}) and (\ref{eq4c}) (replace $FX$ by $FX-A$), we have
\begin{equation} \label{eq4h}
\begin{split}
\PP (FX = A)
& = \frac{1}{\left| G \right|^n}\sum_{C} \EE (\zeta^{\tr(C(FX-A))}) \\
& = \frac{1}{\left| G \right|^n}\sum_{C} \EE (\zeta^{\tr(C(-A))}) \EE (\zeta^{\tr(C(FX))}) \\
& = \frac{1}{\left| G \right|^n} \sum_{C} \EE (\zeta^{\tr(C(-A))}) p_F(C).
\end{split}
\end{equation}
For $\gamma \in (0, \delta)$, we break the sum into $3$ pieces:
\begin{equation*}
\begin{split}
S_1 & := \left\{ C \in \Hom_R(V, ({}^{\sigma}G)^*) : C \text{ is special for } F \right\}, \\
S_2 & := \left\{ C \in \Hom_R(V, ({}^{\sigma}G)^*) : C \text{ is not special for } F \text{ and } \gamma \text{-weak for } F \right\}, \\
S_3 & := \left\{ C \in \Hom_R(V, ({}^{\sigma}G)^*) : C \text{ is } \gamma \text{-robust for } F \right\}.
\end{split}    
\end{equation*}

\begin{enumerate}[label=(\alph*)]
    \item $C \in S_1$: By Lemma \ref{lem4i}, $\left| S_1 \right| = M_G$. Since $p_F(C)=1$ for $C \in S_1$ by Lemma \ref{lem4h}, we have $\sum_{C \in S_1} \EE (\zeta^{C(-A)}) p_F(C) = M_G$ for $A=0$ and $\left| \sum_{C \in S_1} \EE (\zeta^{C(-A)}) p_F(C) \right| \leq M_G$ for any $A$.
    
    \item $C \in S_2$: By Lemma \ref{lem4f1}, $\left| S_2 \right| \leq C_G \binom{n}{\left \lceil \gamma n \right \rceil - 1} \left| G \right|^{\gamma n}$ for some constant $C_G > 0$. By Remark \ref{rmk4c} and Proposition \ref{prop4k}, we have $\left| p_F(C) \right| \leq \exp ( -\varepsilon \delta n / 2 p^{2m} )$ for every $C \in S_2$. 
    
    \item $C \in S_3$: By Remark \ref{rmk4c} and Lemma \ref{lem4f2}, we have
    $$
    \left| \sum_{C \in S_3} \EE (\zeta^{C(-A)}) p_F(C) \right| \leq \left| G \right|^n \exp (\varepsilon \gamma \delta n^2 / 2 p^{2m} \left| G \right|^2).
    $$
\end{enumerate}
Now the proof can be completed as in \cite[Lemma 4.1]{Woo17} by applying the above computations (for a sufficiently small $\gamma$) to the equation (\ref{eq4h}).
\end{proof}

For an integer $D = \prod_{i} p_i^{e_i}$, let $\ell (D) := \sum_{i} e_i$.

\begin{definition} \label{def4x2}
Assume that $\delta < \ell (\left| G \right|)^{-1}$. The \textit{depth} of an $F \in \Hom_R(V, G)$ is the maximal positive integer $D$ such that there is a $\sigma \subset [n]$ with $\left| \sigma \right| < \ell(D) \delta n$ such that $D = [G : FV_{\setminus \sigma}]$, or is $1$ if there is no such $D$. 
\end{definition}

The following lemmas are analogues of \cite[Lemma 5.2 and 5.4]{Woo17}.

\begin{lemma} \label{lem4x3}
There is a constant $K_0$ depending on $G$ such that for every $D>1$, the number of $F \in \Hom_R(V, G)$ of depth $D$ is at most
$$
K_0 \binom{n}{\left \lceil \ell(D)\delta n \right \rceil -1} \left| G \right|^n  D^{-n+\ell(D)\delta n}.
$$
\end{lemma}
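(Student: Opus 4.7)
The plan is to bound the number of depth-$D$ maps $F \in \Hom_R(V, G)$ by counting pairs $(F, \sigma)$ where $\sigma \subset [n]$ witnesses the depth bound: $|\sigma| < \ell(D)\delta n$ and $[G : FV_{\setminus \sigma}] = D$. By Definition \ref{def4x2}, every $F$ of depth $D$ admits at least one such $\sigma$, so summing over these pairs overcounts the $F$'s of interest.

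For each fixed $\sigma$ of size $s$, I would specify $F$ in three stages. First, choose the submodule $H := FV_{\setminus\sigma}$ of $G$; since $G$ is a fixed finite $R$-module, the number of index-$D$ submodules of $G$ is bounded by a constant $C_G$ depending only on $G$. Second, pick $F|_{V_\sigma}$ freely in $G$, contributing $|G|^s$ choices. Third, pick $F|_{V_{\setminus\sigma}}$ with image in $H$, contributing $|H|^{n-s} = (|G|/D)^{n-s}$ choices. Summing over $\sigma$ yields
\[
\#\{F \text{ of depth } D\} \;\leq\; C_G\,|G|^n \sum_{s=0}^{\lceil \ell(D)\delta n\rceil - 1} \binom{n}{s}\, D^{-(n-s)}.
\]

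To finish, I would shrink $\delta$ (depending only on $G$) so that $\ell(|G|)\delta < 1/2$; this is harmless because $\delta$ and $K_0$ may both depend on $G$. Under this assumption the ratio $\binom{n}{s+1}D^{s+1}/\binom{n}{s}D^{s} = (n-s)D/(s+1)$ of consecutive summands stays bounded below by a constant strictly greater than $1$ throughout $0 \leq s \leq \lceil \ell(D)\delta n\rceil - 1$, so the sum is dominated (up to an absolute constant) by its last term $\binom{n}{\lceil \ell(D)\delta n\rceil - 1} D^{\lceil \ell(D)\delta n\rceil - 1}$, which produces an upper bound of the form $K_0 \binom{n}{\lceil \ell(D)\delta n\rceil - 1} |G|^n D^{-n + \ell(D)\delta n}$ as desired. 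The Hermitian structure plays no role here — the argument concerns only the homomorphism $F$, not the random matrix $X$ — so this is purely combinatorial and mirrors \cite[Lemma 5.2]{Woo17} almost verbatim; I anticipate no significant obstacle beyond the routine bookkeeping of polynomial factors into $K_0$.
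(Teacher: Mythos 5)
Your counting is, in substance, the argument the paper itself relies on: the paper gives no proof of this lemma and defers to \cite[Lemma 5.2]{Woo17}, whose proof is exactly the double count over a witness set $\sigma$, an index-$D$ submodule $H\leq G$ (boundedly many choices since $G$ is fixed), arbitrary values of $F$ on $V_{\sigma}$, and values in $H$ on $V_{\setminus\sigma}$. Stages one through three of your argument are fine, and the overcounting is harmless.

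The step to repair is the last one. You are not free to shrink $\delta$: the notion of depth in Definition \ref{def4x2} depends on $\delta$, and the lemma is asserted for whatever $\delta<\ell(|G|)^{-1}$ has been fixed there, so re-choosing $\delta$ changes which $F$ have depth $D$ and you end up proving a variant of the statement (sufficient for the application in Theorem \ref{thm4l}, where $\delta$ is taken small anyway, but not the lemma as stated). This is not merely cosmetic, because without your extra hypothesis the geometric domination can genuinely fail: if $\ell(D)\delta$ is close to $1$ (allowed when $\delta$ is close to $\ell(|G|)^{-1}$ and $D$ is comparable to $|G|$), the ratio $(n-s)D/(s+1)$ drops below $1$ near $s=S:=\left\lceil \ell(D)\delta n\right\rceil-1$, and the sum $\sum_{s\le S}\binom{n}{s}D^{s}$ is no longer controlled by its last term. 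The standard fix removes both the sum and the restriction on $\delta$: any witness $\sigma$ has $|\sigma|\le S$, so enlarge it to a set $\sigma'$ of size exactly $S$; then $FV_{\setminus\sigma'}\subseteq FV_{\setminus\sigma}=H$, so every depth-$D$ map is counted among the at most $C_G\binom{n}{S}|G|^{S}|H|^{n-S}$ maps obtained by choosing $\sigma'$ of size $S$, an index-$D$ submodule $H$, arbitrary values on $V_{\sigma'}$, and values in $H$ on $V_{\setminus\sigma'}$. Since $|H|^{n-S}=(|G|/D)^{n-S}$ and $S\le\ell(D)\delta n$, this is exactly the claimed bound $K_0\binom{n}{\left\lceil \ell(D)\delta n\right\rceil-1}|G|^{n}D^{-n+\ell(D)\delta n}$ (and if $D$ does not divide $|G|$ there is nothing to prove). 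This is the route of \cite[Lemma 5.2]{Woo17}, and with it your write-up proves the lemma as stated.
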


\begin{lemma} \label{lem4x4}
Let $\varepsilon, \delta, G$ be as in Lemma \ref{lem4x1}. Then there exists $K_0>0$ such that if $F \in \Hom_R(V, G)$ has depth $D>1$ and $[G : FV] < D$, then for all $\varepsilon$-balanced matrix $X \in \HH_n(R)$, $$
\PP(FX=0) \leq K_0e^{-\varepsilon (1- \ell(D) \delta) n}(\left| G \right|/D)^{-(1-\ell(D) \delta)n}.
$$
\end{lemma}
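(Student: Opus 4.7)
The plan is to adapt the strategy of \cite[Lemma 5.4]{Woo17} to the Hermitian setting. First, using the definition of depth, I will fix a set $\sigma \subset [n]$ with $|\sigma| < \ell(D)\delta n$ and with $H := FV_{\setminus \sigma}$ of index exactly $D$ in $G$. The hypothesis $[G:FV] < D$ forces the induced map $\bar F : V \to G/H$ to have non-trivial image $FV/H$, while $\bar F$ vanishes on $V_{\setminus \sigma}$.

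Next I exploit a structural decomposition of $X$: by Hermitian symmetry, the upper-triangular entries with both indices in $\sigma^c$ form an independent $\varepsilon$-balanced Hermitian submatrix $X'' \in \HH_{n-|\sigma|}(R)$, while the remaining underlying $Y_{ij}, Z_{ij}$ variables (those with $i$ or $j$ in $\sigma$) determine $X_{\sigma \times [n]}$ and are probabilistically independent of $X''$. Before invoking this I will show that the restriction $F' := F|_{V_{\setminus \sigma}} : V_{\setminus \sigma} \to H$ is itself a code of distance $\delta'(n-|\sigma|)$ for a suitable $\delta' > 0$ depending only on $\delta$ and $D$. A failure would produce $\tau \subset [n] \setminus \sigma$ with $|\tau| < \delta'(n-|\sigma|)$ and $FV_{\setminus (\sigma \cup \tau)}$ a proper subgroup of $H$, hence $[G:FV_{\setminus(\sigma \cup \tau)}] = D' > D$; for $\delta'$ chosen small enough, $\sigma \cup \tau$ would still satisfy $|\sigma \cup \tau| < \ell(D')\delta n$, contradicting the maximality of $D$ in the depth definition.

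I then condition on $X_{\sigma \times [n]}$. For each $j \notin \sigma$, writing $g_j := \sum_{i \in \sigma} X_{ij} F v_i$, the event $FXw_j = 0$ factors as (i) $g_j \in H$, equivalently $E_j := \{\sum_{i \in \sigma} X_{ij} \bar F v_i = 0 \text{ in } G/H\}$, which depends only on the conditioning, and (ii) $F'(X''w_j) = -g_j$ in $H$, which depends on $X''$ given the conditioning. Aggregating (ii) over $j \notin \sigma$ gives a single equation $F'X'' = A$ in $\Hom_R(({}^{\sigma}V_{\setminus \sigma})^*, H)$ for a specific $A$; applying Lemma \ref{lem4x1} to the code $F'$ and the independent $\varepsilon$-balanced $X''$ gives $\PP(F'X'' = A) \leq K_0 |H|^{-(n-|\sigma|)}$. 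Separately, the $E_j$ for distinct $j \notin \sigma$ involve pairwise disjoint underlying $Y_{ij}, Z_{ij}$ variables (a direct combinatorial consequence of the Hermitian index structure), hence are independent; since $\bar F V \neq 0$ there is some $i_0 \in \sigma$ with $\bar F v_{i_0} \neq 0$, and the $\varepsilon$-balance of the $Y$- and $Z$-parts of $X_{i_0, j}$ modulo $p$ yields $\PP(E_j) \leq e^{-\varepsilon}$ up to absorbable constants. Multiplying over $j \notin \sigma$ bounds $\PP(\bigcap_{j \notin \sigma} E_j) \leq e^{-\varepsilon(n-|\sigma|)}$.

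Combining the two estimates and using $|H| = |G|/D$ together with $n - |\sigma| \geq (1 - \ell(D)\delta)n$ delivers
\[
\PP(FX=0) \leq K_0 \, |H|^{-(n-|\sigma|)} \, e^{-\varepsilon(n-|\sigma|)} \leq K_0 \, e^{-\varepsilon(1-\ell(D)\delta)n} \, (|G|/D)^{-(1-\ell(D)\delta)n},
\]
as desired (after adjusting $K_0$). I expect the main obstacle to be the verification that $F'$ is a code of distance $\delta'(n-|\sigma|)$: this has to be uniform over all $D' > D$, and requires a careful choice of $\delta'$ balancing $\delta'(n-|\sigma|) + |\sigma|$ against $\ell(D') \delta n$. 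A secondary technical point is the per-column anti-concentration, which must correctly handle both the $Y$- and $Z$-parts of the $\varepsilon$-balanced entry $X_{i_0, j}$ in view of the $R$-module structure on $G/H$ and the conjugate action of $\sigma$.
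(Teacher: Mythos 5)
Your proposal is correct and follows essentially the same route as the paper, whose proof simply says ``follow \cite[Lemma 5.4]{Woo17}'': the choice of $\sigma$ realizing the depth, the observation that $F|_{V_{\setminus\sigma}}$ is a code onto $H=FV_{\setminus\sigma}$ by maximality of $D$ (your $\delta'=\delta$ already works, since $D'\ge 2D$ gives $\ell(D')\ge\ell(D)+1$), the conditioning on the rows in $\sigma$, the independence of the events $E_j$ across $j\notin\sigma$, and the application of Lemma~\ref{lem4x1} to the $\varepsilon$-balanced principal submatrix on $\sigma^c$ are exactly the cited argument, which you reconstruct faithfully in the Hermitian setting. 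The only step the paper writes out is precisely your flagged ``secondary technical point'': since $\operatorname{Ann}_{G/H}(Fv_{i_0})$ is a proper ideal $\pi^k R$ with $k\ge 1$, the values of the $Y$-part of $X_{i_0 j}$ compatible with $E_j$ lie in a single residue class mod $p$, so $\PP(E_j)\le 1-\varepsilon\le e^{-\varepsilon}$ exactly, with no extra constants to absorb.
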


\begin{proof}
We follow the proof of \cite[Lemma 5.4]{Woo17}. It is enough to show that $$
\PP(x_1f_1 \equiv g \text{ in } G/H) \leq 1 - \varepsilon \leq e^{-\varepsilon},
$$
where $H$ is an $\OO$-submodule of $G$ of index $D$, $f_1 \in G \setminus H$ and $x_1$ is a non-diagonal entry of $X$. Write $x_1 = x+wy$ for $\varepsilon$-balanced $x, y \in R_1$. 
Since $\text{Ann}_{G/H}(f_1) := \left\{ r \in R : rf_1 \in H \right\}$ is a proper ideal of $R$, it is of the form $\pi^k R$ for some $k \geq 1$. Thus the elements of the set
$$
\left \{ x \in R_1 : x_1f_1 \equiv g \text{ in } G/H \text{ for some } y \in R_1 \right \}
$$
are contained in a single equivalence class modulo $p$. Since $x$ is $\varepsilon$-balanced, we conclude that $\PP(x_1f_1 \equiv g \text{ in } G/H) \leq 1 - \varepsilon$.
\end{proof}

\begin{theorem} \label{thm4l}
Let $0 < \varepsilon < 1$ and $G$ be given. Then for any sufficiently small $c > 0$, there is a $K_0=K_{\varepsilon, G, c}>0$ such that for every positive integer $n$ and an $\varepsilon$-balanced matrix $X_0 \in \HH_n(\OO)$, 
$$
\left| \EE(\# \Sur_{\OO}(\cok(X_0), G)) - p^{\sum_{i=1}^{r} (2i-1) \lambda_i} \right| \leq K_0 e^{-cn}.
$$
In particular, the equation (\ref{eq2f}) holds for every sequence of $\varepsilon$-balanced matrices $(X_n)_{n \geq 1}$.
\end{theorem}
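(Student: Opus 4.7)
The plan is to estimate the quantity
\begin{equation*}
\EE(\# \Sur_{\OO}(\cok(X_0), G)) = \sum_{F \in \Sur_R(V, G)} \PP(FX = 0)
\end{equation*}
coming from (\ref{eq4a}) by splitting the surjective $F$ according to whether $F$ is a code of distance $\delta n$ (equivalently, of depth $1$) or has depth $D > 1$, following the template of \cite[Theorem 6.1]{Woo17}. I fix $\delta > 0$ small enough, in particular $\delta < \ell(\left| G \right|)^{-1}$, so that Lemmas \ref{lem4x1}, \ref{lem4x3}, \ref{lem4x4} all apply and every possible depth $D$ satisfies $\ell(D) \leq \ell(\left| G \right|)$ (since $D = [G : FV_{\setminus \sigma}]$ divides $\left| G \right|$).

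For surjective codes of distance $\delta n$, Lemma \ref{lem4x1} yields $\bigl| \PP(FX = 0) - M_G \left| G \right|^{-n} \bigr| \leq K_0 e^{-cn} \left| G \right|^{-n}$ with $M_G = p^{\sum_{i=1}^{r}(2i-1)\lambda_i}$. The standard estimate $\left| \Sur_R(R^n, G) \right| / \left| G \right|^n = 1 + O(e^{-c'n})$, combined with Lemma \ref{lem4x3} summed over the finitely many divisors $D > 1$ of $\left| G \right|$ to bound the number of surjective $F$ that are not codes of distance $\delta n$, shows that the total code contribution equals $M_G + O(e^{-cn})$.

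For surjective $F$ of depth $D > 1$ the hypothesis $[G : FV] = 1 < D$ of Lemma \ref{lem4x4} is met, and multiplying its bound with the count of Lemma \ref{lem4x3} causes the powers of $D$ to cancel exactly, leaving
\begin{equation*}
K_0^2 \binom{n}{\lceil \ell(D) \delta n \rceil - 1} \left| G \right|^{\ell(D) \delta n} e^{-\varepsilon (1 - \ell(D) \delta) n}
\end{equation*}
for each $D > 1$. Since $\ell(D) \delta$ is uniformly bounded by $\ell(\left| G \right|) \delta < 1$ and $D$ ranges over only finitely many values, choosing $\delta$ sufficiently small makes each such term (and hence their sum) of size $O(e^{-cn})$. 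Summing the two contributions proves the stated bound, and the limit statement (\ref{eq2f}) follows by letting $n \to \infty$. The main technical obstacle will be the simultaneous calibration of $\delta$ so that the exponential rates in Lemmas \ref{lem4x1} and \ref{lem4x4} stay positive while the combinatorial factor $\binom{n}{\lceil \ell(D) \delta n \rceil - 1} \left| G \right|^{\ell(D) \delta n}$ is absorbed by $e^{-\varepsilon (1 - \ell(D) \delta) n}$ uniformly in $D$; everything else is a matter of bookkeeping.
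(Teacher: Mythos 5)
Your proposal is correct and follows essentially the same route as the paper: the same splitting of the surjective $F$ into codes of distance $\delta n$ versus non-codes of depth $D>1$, with Lemma \ref{lem4x1} giving the main term and Lemmas \ref{lem4x3} and \ref{lem4x4} (whose powers of $D$ cancel exactly as you note) controlling the error, your bookkeeping via $\left| \Sur_R(V,G) \right| / \left| G \right|^n = 1 + O(e^{-c'n})$ being only a cosmetic repackaging of the paper's comparison with $\sum_{F \in \Hom_R(V,G)} M_G \left| G \right|^{-n}$. One tiny caveat: being a code of distance $\delta n$ is not literally equivalent to having depth $1$ (a code could a priori have depth $D>1$ witnessed by a subset of size between $\delta n$ and $\ell(D)\delta n$), but your argument only uses that every surjective non-code has depth $D>1$, so this does not affect the proof.
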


\begin{proof}
Throughout the proof, $K_0$ denotes a positive constant which may vary from line to line. Let $X \in \HH_n(R)$ be the reduction of $X_0 \in \HH_n(\OO)$. By the equation (\ref{eq4a}), we have
\begin{equation*}
\begin{split}
& \left| \EE(\# \Sur_{\OO}(\cok(X_0), G)) - M_G \right| \\
= & \left| \sum_{F \in \Sur_R(V, G)} \PP(FX=0) - \sum_{F \in \Hom_R(V, G)} M_G \left| G \right|^{-n} \right| \\
\leq & \sum_{F \in \Sur_R(V, G)} \left| \PP(FX=0) - M_G \left| G \right|^{-n} \right| + \sum_{F \in \Hom_R(V, G) \setminus \Sur_R(V, G)} M_G \left| G \right|^{-n}.
\end{split}    
\end{equation*}

\begin{enumerate}[label=(\alph*)]
\item By Lemma \ref{lem4x1}, we have
$$
\sum_{\substack{F \in \Sur_R(V, G) \\ F \text{ code of distance } \delta n}} \left| \PP(FX=0) - M_G \left| G \right|^{-n} \right| \leq K_0e^{-cn}.
$$

\item By Lemma \ref{lem4x3} and \ref{lem4x4}, for a sufficiently small $\delta$ we have
\begin{equation*}
\begin{split}
& \sum_{\substack{F \in \Sur_R(V, G) \\ F \text{ not code of distance } \delta n}} \PP(FX=0) \\
\leq & \sum_{\substack{D>1 \\ D \mid \# G}} K_0 \binom{n}{\left \lceil \ell(D)\delta n \right \rceil -1} \left| G \right|^n D^{-n+\ell(D)\delta n} e^{-\varepsilon (1- \ell(D) \delta) n}(\left| G \right|/D)^{-(1-\ell(D) \delta)n} \\
= & \sum_{\substack{D>1 \\ D \mid \# G}} K_0 \binom{n}{\left \lceil \ell(D)\delta n \right \rceil -1} \left| G \right|^{\ell(D) \delta n} e^{-\varepsilon (1- \ell(D) \delta) n} \\
\leq & \, K_0e^{-cn}.
\end{split}    
\end{equation*}

\item By Lemma \ref{lem4x3}, for a sufficiently small $\delta$ we have
\begin{equation*}
\begin{split}
& \sum_{\substack{F \in \Sur_R(V, G) \\ F \text{ not code of distance } \delta n}} M_G \left| G \right|^{-n} \\
\leq & \sum_{\substack{D>1 \\ D \mid \# G}} K_0 \binom{n}{\left \lceil \ell(D)\delta n \right \rceil -1} \left| G \right|^n \left| D \right|^{-n+\ell(D)\delta n} M_G \left| G \right|^{-n} \\
\leq & K_0 \binom{n}{\left \lceil \ell(\left| G \right|)\delta n \right \rceil -1} 2^{-n+\ell(\left| G \right|)\delta n} \\
\leq & \, K_0e^{-cn}.
\end{split}    
\end{equation*}

\item We also have
\begin{equation*}
\begin{split}
\sum_{F \in \Hom_R(V, G) \setminus \Sur_R(V, G)} M_G \left| G \right|^{-n}
& \leq \sum_{H \lneq G} \sum_{F \in \Sur_R(V, H)} K_0 \left| G \right|^{-n} \\
& \leq \sum_{H \lneq G} K_0  \left| H \right|^n \left| G \right|^{-n} \\
& \leq K_0 2^{-n}. \qedhere
\end{split}    
\end{equation*}
\end{enumerate}
\end{proof}

Combining the above theorem with Theorem \ref{thm2c} and \ref{thm3c}, we obtain the universality result for the distribution of the cokernels of random $p$-adic unramified Hermitian matrices. 

\begin{theorem} \label{thm4m}
For every sequence of $\varepsilon$-balanced matrices $(X_n)_{n \geq 1}$ ($X_n \in \HH_n(\OO)$), the limiting distribution of $\cok(X_n)$ is given by the equation (\ref{eq2c}).
\end{theorem}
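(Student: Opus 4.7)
The proof is a direct assembly of three earlier results: Theorem \ref{thm4l} controls the moments of $\cok(X_n)$ for the $\varepsilon$-balanced sequence with exponentially small error, Theorem \ref{thm2c}(1) computes the limiting distribution (and its moments, via Theorem \ref{thm2g}(1)) in the Haar case, and Theorem \ref{thm3c} asserts that moments of the right size uniquely determine the limiting distribution. Combining these delivers Theorem \ref{thm4m} with little further work; essentially all the difficulty has already been absorbed into Theorem \ref{thm4l}.

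First I would introduce a Haar-random comparison sequence $X_n^{\mathrm{Haar}} \in \HH_n(\OO)$ and set $A_n := \cok(X_n^{\mathrm{Haar}})$, $B_n := \cok(X_n)$. For every $G = G_\mu \in \mathcal{M}_a$, Theorem \ref{thm2g}(1) applied to $A_n$ and Theorem \ref{thm4l} applied to $B_n$ give
\[
\lim_{n \to \infty} \EE(\# \Sur_{\OO}(A_n, G))
= \lim_{n \to \infty} \EE(\# \Sur_{\OO}(B_n, G))
= p^{\sum_i (2i-1)\mu_i}.
\]
Using the identity $\sum_i (2i-1)\mu_i = \sum_j \mu_j'^2$ recalled after Theorem \ref{thm2g}, together with $|\kappa| = p^2$ in the unramified case, this common value equals $m(G_\mu)$; in particular the bound $O(m(G))$ required by Theorem \ref{thm3c} is satisfied. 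Moreover, Theorem \ref{thm2c}(1) produces explicit limits $\lim_n \PP(A_n \cong G')$ for every finite $\OO$-module $G'$, and summing over the finitely many $G'$ with $G'/p^a G' \cong G$ shows that $\lim_n \PP(A_n \otimes \OO/p^a \OO \cong G)$ exists for each $G \in \mathcal{M}_a$.

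With the hypotheses of Theorem \ref{thm3c} verified, it yields the identity
\[
\lim_{n \to \infty} \PP(B_n \otimes \OO/p^a \OO \cong G)
= \lim_{n \to \infty} \PP(A_n \otimes \OO/p^a \OO \cong G)
\]
for every $a$ and every $G \in \mathcal{M}_a$. To conclude, I would fix a finite $\OO$-module $\Gamma$ of type $\lambda$ and take $a > \lambda_1$: for such $a$ the only finite $\OO$-module $G'$ of exponent dividing $p^a$ with $G'/p^a G' \cong \Gamma$ is $\Gamma$ itself, so the identity specializes to $\lim_n \PP(\cok(X_n) \cong \Gamma) = \lim_n \PP(\cok(X_n^{\mathrm{Haar}}) \cong \Gamma)$, which by Theorem \ref{thm2c}(1) equals the right-hand side of (\ref{eq2c}). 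The only mildly delicate point is this final bookkeeping step --- eliminating possible contributions from $\cok(X_n)$ with infinite-rank components by letting $a \to \infty$ under the uniform-in-$a$ moment bound --- but it is entirely formal given the exponential error control in Theorems \ref{thm4l} and \ref{thm2c}.
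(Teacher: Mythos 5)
Your proposal follows exactly the paper's route: compare with a Haar sequence, match the limiting moments of $\cok(X_n)$ (Theorem \ref{thm4l}) with those of the Haar case (Theorem \ref{thm2g}), note they equal $m(G)$ since $|\kappa|=p^2$ and $\sum_i(2i-1)\lambda_i=\sum_j\lambda_j'^2$, invoke Theorem \ref{thm3c}, and then undo the reduction modulo a power of $p$; this is precisely the paper's (very terse) proof, with $a$ chosen so that $p^{a-1}\Gamma=0$. Two points in your final bookkeeping are inaccurate, though neither is fatal to the argument.

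First, to verify the hypothesis of Theorem \ref{thm3c} that $\lim_{n}\PP(A_n\otimes\OO/p^a\OO\cong G)$ exists you sum ``over the finitely many $G'$ with $G'/p^aG'\cong G$''; but when $G\in\mathcal{M}_a$ has a cyclic summand of order exactly $p^a$ there are \emph{infinitely} many such isomorphism classes $G'$ (for instance $G=\OO/p^a\OO$ is the reduction of every $\OO/p^b\OO$ with $b\geq a$, and of $\OO$ itself), so interchanging the limit with this infinite sum needs a justification. One way is dominated convergence: the exact finite-$n$ formula of Theorem \ref{thm2c}(1) gives $\PP(A_n\cong G')\leq\sum_{[\delta]}\left|\Aut_{\OO}(G',\delta)\right|^{-1}$ uniformly in $n$, and these bounds are summable over $G'$ because the limiting measure has finite total mass. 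Second, the passage from the mod-$p^a$ identity back to $\cok(X_n)$ itself does not require any ``$a\to\infty$'' device, and your statement about finite $G'$ of exponent dividing $p^a$ is not the relevant one (the possible cokernels are arbitrary finitely generated $\OO$-modules, possibly of positive rank or of larger exponent). The clean statement, which is the one the paper uses, is: since $p^{a-1}\Gamma=0$, for \emph{every} finitely generated $\OO$-module $H$ one has $H\otimes\OO/p^a\OO\cong\Gamma$ if and only if $H\cong\Gamma$, because a free summand or a cyclic summand of order at least $p^a$ in $H$ would force a summand of exponent $p^a$ in $H\otimes\OO/p^a\OO$, contradicting $p^{a-1}\Gamma=0$. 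Hence for each fixed $n$ the events $\{\cok(X_n)\otimes\OO/p^a\OO\cong\Gamma\}$ and $\{\cok(X_n)\cong\Gamma\}$ coincide, and the single choice $a=\lambda_1+1$ already yields $\lim_n\PP(\cok(X_n)\cong\Gamma)=\lim_n\PP(A_n\cong\Gamma)$, which equals the right-hand side of (\ref{eq2c}) by Theorem \ref{thm2c}(1).
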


\begin{proof}
We follow the proof of \cite[Corollary 9.2]{Woo17}. Choose a positive integer $a$ such that $p^{a-1} \Gamma = 0$. Then for any finitely generated $\OO$-module $H$, we have $H \otimes \OO/p^{a} \OO \cong \Gamma$ if and only if $H \cong \Gamma$. Let $A_n$ be the cokernel of a Haar random matrix in $\HH_n(\OO)$ and $B_n = \cok(X_n)$. Then Theorem \ref{thm2c}, \ref{thm3c} and \ref{thm4l} conclude the proof.
\end{proof}

\section{Universality of the cokernel: the ramified case} \label{Sec5}

In this section, we assume that $K/\Q_p$ is ramified. 
We say $K/\Q_p$ is of \textit{type II} if $p=2$ and $K = \Q_2(\sqrt{1+2u})$ for some $u \in \Z_2^{\times}$. Otherwise we say $K/\Q_p$ is of \textit{type I}. By the choice of the uniformizer $\pi$ in Section \ref{Sub12}, we have $\sigma(\pi) = - \pi$ if $K/\Q_p$ is of type I and $\sigma(\pi) = 2 - \pi$ if $K/\Q_p$ is of type II. Let $G$ be a finite $\OO$-module of type $\lambda = (\lambda_1 \geq \cdots \geq \lambda_r)$. Define the rings $R$, $R_1$, $R_2$ and the map $T \in \Hom_{\Z}(R, R_1)$ as follow. Recall that we have $\OO=\Z_p[\pi]$.

\begin{itemize}
    \item If $K/\Q_p$ is of type I, choose an integer $m>1$ such that $\pi^{2m-1} G = 0$. Define $R = \OO / \pi^{2m-1} \OO$, $R_1 = \Z_p/p^m \Z_p \cong \Z / p^{m}\Z$ and $R_2 = \Z_p/p^{m-1} \Z_p \cong \Z / p^{m-1}\Z$. Every element of $\OO$ is of the form $x + \pi y$ for $x, y \in \Z_p$, and the images of $x+\pi y$ and $x'+\pi y'$ in $R$ are the same if and only if $(x-x')+\pi(y-y') \in \pi^{2m-1} \OO$, which is equivalent to $x-x' \in p^m\Z_p$ and $y-y' \in p^{m-1}\Z_p$. Therefore every element of $R$ is uniquely expressed as $x + \pi y$ for some $x \in R_1$ and $y \in R_2$. Define the map $T \in \Hom_{\Z}(R, R_1)$ by $T(x + \pi y) = x$.
    
    \item If $K/\Q_p$ is of type II, choose an integer $m>0$ such that $\pi^{2m} G = 0$. Define $R = \OO / \pi^{2m} \OO$ and $R_1 = R_2 = \Z_p/p^m \Z_p \cong \Z / p^{m}\Z$. Every element of $R$ is uniquely expressed as $x + \pi y$ for some $x \in R_1$ and $y \in R_2$. Define the map $T \in \Hom_{\Z}(R, R_1)$ by $T(x + \pi y) = x+y$.
\end{itemize}

For both cases, we have $x + \sigma(x) = 2T(x)$ for every $x \in R$. This shows that it is natural to replace the trace map in Section \ref{Sec4} by the map $T$. 

For $X \in \HH_n(\OO)$, denote $X_{ij} = Y_{ij} + \pi Z_{ij}$ for $Y_{ij}, Z_{ij} \in \Z_p$. Similarly, for $X \in \HH_n(R)$, denote $X_{ij} = Y_{ij} + \pi Z_{ij}$ for $Y_{ij} \in R_1$ and $Z_{ij} \in R_2$. Then $Z_{ii}=0$ and $X$ is determined by $n^2$ elements $Y_{ij}$ ($i \leq j$), $Z_{ij}$ ($i<j$). 
Define $V$, $W$, $v_i$, $w_i$, $v_i^*$ and $w_i^*$ and identify them as in Section \ref{Sec4}.
For an $\varepsilon$-balanced matrix $X_0 \in \HH_n(\OO)$, its reduction $X \in \HH_n(R)$ is also $\varepsilon$-balanced and we have
\begin{equation} \label{eq5a}
\EE(\# \Sur_{\OO}(\cok(X_0), G))
= \EE(\# \Sur_{R}(\cok(X), G))
= \sum_{F \in \Sur_R(V, G)} \PP (FX = 0).
\end{equation}

\begin{lemma} \label{lem5a}
Let $\zeta := \zeta_{p^m} \in \C$ be a primitive $p^m$-th root of unity. Then we have
\begin{equation*}
1_{FX=0} = \frac{1}{\left| G \right|^n}\sum_{C \in \Hom_R(\Hom_R(W, G), R)} \zeta^{T(C(FX))}.    
\end{equation*}
\end{lemma}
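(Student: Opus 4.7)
The plan is to imitate the proof of Lemma \ref{lem4b}, with the ramified map $T$ playing the role of the trace. Write $J := \Hom_R(\Hom_R(W, G), R)$ and note first that $|J| = |G|^n$: using $\Hom_R(W, G) \cong G^n$ and decomposing $G \cong \prod_i R/\pi^{\lambda_i} R$, we have $\Hom_R(G, R) \cong \prod_i R[\pi^{\lambda_i}]$, and $|R[\pi^{\lambda_i}]| = p^{\lambda_i} = |R/\pi^{\lambda_i} R|$, so $|\Hom_R(G, R)| = |G|$ and hence $|J| = |G|^n$. Therefore when $FX = 0$ the right-hand side is $|J|/|G|^n = 1$, as required.

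It remains to show that for every non-zero $\alpha \in \Hom_R(W, G)$,
$$
\sum_{C \in J} \zeta^{T(C \alpha)} = 0.
$$
Pick the smallest $t \geq 1$ with $p^t \alpha = 0$, and (after reindexing) assume $p^{t-1} \alpha(w_1) \neq 0$. Exactly as in the proof of Lemma \ref{lem4b}, for each $x \in R[p^t]$ one can produce $C_1 \in \Hom_R(G, R)$ with $C_1(\alpha(w_1)) = x$ by using the cyclic component of $\alpha(w_1)$ of maximal order. Via the identification $J \cong \Hom_R(G,R)^n$, the sets $J_x := \{C \in J : C\alpha = x\}$ are nonempty precisely for $x \in R[p^t]$, and all share the common cardinality $|G|^n / |R[p^t]|$. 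Thus the sum collapses to
$$
\frac{|G|^n}{|R[p^t]|} \sum_{x \in R[p^t]} \zeta^{T(x)},
$$
and the task becomes showing that the inner sum vanishes.

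The final step uses the explicit description of $T$ together with the direct-sum decomposition $R = R_1 \oplus \pi R_2$ as $\Z_p$-modules, which yields $R[p^t] = R_1[p^t] \oplus \pi R_2[p^t]$. In the type I case $T(x + \pi y) = x$, so
$$
\sum_{z \in R[p^t]} \zeta^{T(z)} = |R_2[p^t]| \sum_{x \in R_1[p^t]} \zeta^x,
$$
and the inner sum vanishes because $R_1[p^t] = p^{m-t} R_1$ is a non-trivial subgroup of $\Z/p^m\Z$ on which $x \mapsto \zeta^x$ restricts to a non-trivial character (its value at $p^{m-t}$ is a primitive $p^t$-th root of unity). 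In the type II case $T(x + \pi y) = x + y$, and the sum factors as a product $(\sum_{x \in R_1[p^t]} \zeta^x)(\sum_{y \in R_2[p^t]} \zeta^y)$, each factor being zero for the same character-orthogonality reason.

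There is no serious obstacle here; the only thing that requires care is verifying that the $\Z_p$-module decomposition $R = R_1 \oplus \pi R_2$ behaves well under $p^t$-torsion and that $T$, which is only $\Z_p$-linear rather than $R$-linear, produces a non-trivial additive character when composed with $\zeta$. The type II formula $T(x+\pi y) = x+y$ is consistent with the defining identity $x + \sigma(x) = 2T(x)$ since $\sigma(\pi) = 2 - \pi$ gives $\sigma(x+\pi y) = x + 2y - \pi y$, and this is precisely what makes the product-of-sums factorization work in that case.
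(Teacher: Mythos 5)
There is a genuine gap in your reduction step: in the ramified setting the torsion of $\alpha$ must be measured in powers of the uniformizer $\pi$, not of $p$. If you take $t$ minimal with $p^t\alpha=0$, then (since $p$ and $\pi^2$ are associates in $\OO$) the exact $\pi$-order $s$ of $\alpha$ satisfies $s\in\{2t-1,2t\}$, and the image of the $R$-linear evaluation map $C\mapsto C\alpha$ is the ideal $R[\pi^{s}]$, not $R[p^t]=R[\pi^{2t}]$. When $s=2t-1$ is odd your claim that $J_x$ is nonempty precisely for $x\in R[p^t]$ fails, and with it the claim that the nonempty fibers have cardinality $\lvert G\rvert^n/\lvert R[p^t]\rvert$. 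A concrete instance: type I with $m=2$, $R=\OO/\pi^3\OO$, $G=R/\pi R$, $n=1$, $\alpha(w_1)=1$; here every $C\alpha$ lies in $\pi^{2}R\cong R[\pi]$, which has $p$ elements, whereas $R[p]=\pi R$ has $p^2$ elements, so most $x\in R[p]$ are never attained. Consequently the character sum you evaluate, $\sum_{x\in R[p^t]}\zeta^{T(x)}$ with equal $p$-torsion exponents in the two coordinates of $R=R_1\oplus\pi R_2$, is taken over the wrong set; the sum that actually arises is $\sum_{x\in R[\pi^{s}]}\zeta^{T(x)}$ with $s$ possibly odd, and then $R[\pi^{s}]=R_1[p^{\lceil s/2\rceil}]\oplus\pi R_2[p^{\lfloor s/2\rfloor}]$ has \emph{different} exponents in the two coordinates, a case your computation does not cover.

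The gap is fixable, and the fix is what the paper does: take $t$ minimal with $\pi^t\alpha=0$, reduce to showing $\sum_{r\in R[\pi^t]}\zeta^{T(r)}=0$ for every $t>0$, and use $R[\pi^t]=R_1[p^{\lceil t/2\rceil}]\oplus\pi R_2[p^{t-\lceil t/2\rceil}]$. Since $\lceil t/2\rceil\geq 1$, the sum over the $R_1$-coordinate vanishes by exactly the character-orthogonality observation you make (in type I the $R_2$-coordinate only contributes the multiplicative factor $p^{t-\lceil t/2\rceil}$; in type II the sum factors as a product whose $R_1$-factor is zero). So your orthogonality argument is sound, but it must be applied to these possibly unequal torsion exponents rather than to $R[p^t]$; the rest of your setup (the count $\lvert J\rvert=\lvert G\rvert^n$, the coset structure of the fibers over the attained values, and the consistency check $x+\sigma(x)=2T(x)$) is fine.
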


\begin{proof}
Following the proof of Lemma \ref{lem4b}, it is enough to show that 
$$
\sum_{r \in R[\pi^t]} \zeta^{T(r)} = 0
$$
for $t > 0$. Let $\displaystyle s := \left \lceil \frac{t}{2} \right \rceil \geq 1$. If $K/\Q_p$ is of type I, then we have
\begin{equation*}
\sum_{r \in R[\pi^t]} \zeta^{T(r)}
= \sum_{x \in R_1[p^s], \, y \in R_2[p^{t-s}]} \zeta^{x}
= p^{t-s} \sum_{x \in R_1[p^s]} \zeta^{x}
= 0.
\end{equation*}
If $K/\Q_p$ is of type II, then we have
\begin{equation*}
\sum_{r \in R[\pi^t]} \zeta^{T(r)}
= \sum_{x \in R_1[p^s], \, y \in R_2[p^{t-s}]} \zeta^{x+y}
= \left ( \sum_{x \in R_1[p^s]} \zeta^{x}  \right ) \left ( \sum_{y \in R_2[p^{t-s}]} \zeta^{y}  \right )
= 0. \qedhere
\end{equation*}
\end{proof}

By the above lemma, we have
\begin{equation} \label{eq5b}
\PP (FX = 0)
= \EE (1_{FX=0})
= \frac{1}{\left| G \right|^n}\sum_{C \in \Hom_R(\Hom_R(W, G), R)} \EE (\zeta^{T(C(FX))}).    
\end{equation}
Define the maps $F$ and $C$ as in Section \ref{Sec4}. If $X \in \HH_n(R)$, then $X_{ji} = \sigma(X_{ij}) = Y_{ij} + \sigma(\pi) Z_{ij}$. Therefore
\begin{equation} \label{eq5c}
\begin{split}
T (C(FX)) 
& = T (\sum_{i, j} e_{ij}X_{ij} ) \\
& = \sum_{i=1}^{n} T(e_{ii}X_{ii}) + \sum_{i < j} T (e_{ij}(Y_{ij}+ \pi Z_{ij}) + e_{ji}(Y_{ij}+ \sigma(\pi) Z_{ij})) \\
& = \sum_{i=1}^{n} T(e_{ii}) Y_{ii} 
+ \sum_{i < j} T(e_{ij}+e_{ji})Y_{ij} 
+ \sum_{i < j} T(e_{ij} \pi +e_{ji} \sigma(\pi))Z_{ij}
\end{split}
\end{equation}
for $e_{ij} := C(v_j)(F(v_i)) \in R$. Note that if $K/\Q_p$ is of type I, then $T(e_{ij} \pi +e_{ji} \sigma(\pi)) \in pR_1$ so $T(e_{ij} \pi +e_{ji} \sigma(\pi))Z_{ij}$ is well-defined as an element of $R_1$ for $Z_{ij} \in R_2$. By the equations (\ref{eq5b}) and (\ref{eq5c}), we have
\begin{equation} \label{eq5d}
\begin{split}
& \PP (FX=0) \\
= \, &  \frac{1}{\left| G \right|^n} \sum_{C} 
\left ( \prod_{i} \EE(\zeta^{T(e_{ii})Y_{ii}}) \right )
\left ( \prod_{i < j} \EE(\zeta^{T(e_{ij}+e_{ji})Y_{ij}}) \right ) \left ( \prod_{i < j} \EE(\zeta^{T(e_{ij}\pi +e_{ji} \sigma(\pi))Z_{ij}}) \right ) \\
= & \frac{1}{\left| G \right|^n} \sum_{C} p_F(C).
\end{split}
\end{equation}
for
$$
p_F(C) := \left ( \prod_{i} \EE(\zeta^{T(e_{ii})Y_{ii}}) \right )
\left ( \prod_{i < j} \EE(\zeta^{T(e_{ij}+e_{ji})Y_{ij}}) \right ) \left ( \prod_{i < j} \EE(\zeta^{T(e_{ij}\pi +e_{ji} \sigma(\pi))Z_{ij}}) \right ).
$$
Define $E(C, F, i, i) := T(e_{ii})$ and $E(C,F,i,j) := e_{ij} + \sigma(e_{ji})$ for every $i<j$.

\begin{remark} \label{rmk5b}
For $i<j$, write $e_{ij} = a+\pi b$ and $e_{ji} = c+ \pi d$ for $a, c \in R_1$ and $b, d \in R_2$. 
\begin{itemize}
    \item Type I: 
\begin{equation*}
\begin{split}
E(C,F,i,j) & = (a+c) + \pi(b-d), \\
T(e_{ij}+e_{ji}) & = a+c, \\
T(e_{ij} \pi +e_{ji} \sigma(\pi)) & = \pi^2(b-d).
\end{split}    
\end{equation*}
Note that for $b-d \in R_2$, $\pi^2(b-d)$ is well-defined as an element of $R_1$.

    \item Type II: $\pi(\pi-2) = 2u \in R_1$ so $T(\pi^2) = T(\pi(\pi-2) + 2 \pi) = \pi^2 - 2 \pi + 2$. This implies that
\begin{equation*}
\begin{split}
E(C,F,i,j) & = (a+c+2d) + \pi(b-d), \\
T(e_{ij}+e_{ji}) & = (a+c+2d)+(b-d), \\
T(e_{ij} \pi +e_{ji} \sigma(\pi)) & = (a+c+2d)+(\pi^2 - 2 \pi + 2)(b-d).
\end{split}    
\end{equation*}
\end{itemize}
One can check that for both types,
$$
E(C, F, i, j)=0 \,\, \Leftrightarrow \,\, T(e_{ij}+e_{ji})=0 \text{ and } T(e_{ij} \pi +e_{ji} \sigma(\pi))=0.
$$
By \cite[Lemma 4.2]{Woo17}, we have $\displaystyle \left| p_F(C) \right| \leq \exp(-\frac{\varepsilon N}{p^{2m}})$ where $N$ is the number of the non-zero coefficients $E(C, F, i, j)$. 
\end{remark}

Define $\phi_{F, C}$, $\phi_{C, F}$ and $t$ as before. Then 
$$
t(\phi_{C, F}(v_j), \phi_{F, C}(v_i)) = E(C, F, i, j)
$$
for $i<j$ and 
$$
t(\phi_{C, F}(v_i), \phi_{F, C}(v_i)) = e_{ii} + \sigma(e_{ii}) = 2E(C, F, i, i).
$$

The following lemmas are analogues of \cite[Lemma 3.1 and 3.5]{Woo17}, whose proofs are also identical. Since the classification of finitely generated modules over $\OO/\pi^{2m-1} \OO$ (resp. $\OO/\pi^{2m} \OO$) and finitely generated modules over $\Z/p^{2m-1}\Z$ (resp. $\Z/p^{2m}\Z$) are the same, we can imitate the proof given in \cite{Woo17}. The $\gamma$-robustness, $\gamma$-weakness and the code of distance $d_0$ are defined as in Definition \ref{def4d} and \ref{def4e}. 

\begin{lemma} \label{lem5c1}
There is a constant $C_G > 0$ such that for every $n$ and $F \in \Hom_R(V, G)$, the number of $\gamma$-weak $C$ is at most
$$
C_G \binom{n}{\left \lceil \gamma n \right \rceil - 1} \left| G \right|^{\gamma n}.
$$
\end{lemma}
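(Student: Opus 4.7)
The plan is to imitate the argument of the analogous Lemma~\ref{lem4f1} (which itself follows Wood's \cite[Lemma 3.1]{Woo17}), using only that $R$ is a finite commutative local ring whose finitely generated modules are classified by partitions --- exactly as in the unramified and symmetric cases. Because the submodule lattice of $G$ over $R = \OO/\pi^{2m-1}\OO$ (type I) or $R = \OO/\pi^{2m}\OO$ (type II) behaves identically to that over a quotient of $\Z_p$, the proof carries over with no structural change; the ramification does not enter here at all.

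The first step is to unpack the $\gamma$-weakness condition. Since $\phi_{C,F}(v) = (C(v), F(v))$, we have $\ker(\phi_{C,F}|_{V_{\setminus \nu}}) = \ker(C|_{V_{\setminus \nu}}) \cap \ker(F|_{V_{\setminus \nu}})$, so the defining equality $\ker(\phi_{C,F}|_{V_{\setminus \nu}}) = \ker(F|_{V_{\setminus \nu}})$ is equivalent to $C|_{V_{\setminus \nu}}$ factoring through the quotient $F|_{V_{\setminus \nu}} : V_{\setminus \nu} \twoheadrightarrow F(V_{\setminus \nu}) \subseteq G$. Moreover the property is preserved when $\nu$ is enlarged to a superset $\nu' \supset \nu$, because restriction of the same kernel identity to $V_{\setminus \nu'} \subset V_{\setminus \nu}$ still holds. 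Hence one may assume the witness set has exactly $|\nu| = \lceil \gamma n \rceil - 1$ elements.

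The second step is the counting. There are $\binom{n}{\lceil \gamma n \rceil - 1}$ choices for $\nu$. For each $\nu$, the restriction $C|_{V_{\setminus \nu}}$ is determined by an $R$-linear map from the submodule $F(V_{\setminus \nu}) \subseteq G$ to $({}^\sigma G)^*$, and the number of such maps is bounded by the constant $C_G' := \max_{H \leq G} |\Hom_R(H, ({}^\sigma G)^*)|$, which depends only on $G$. The complementary piece $C|_{V_\nu}$ is unconstrained, contributing $|({}^\sigma G)^*|^{|\nu|} = |G|^{\lceil \gamma n \rceil - 1}$ further choices. Multiplying these three factors yields a bound of the form $C_G \binom{n}{\lceil \gamma n \rceil - 1} |G|^{\gamma n}$ for a suitable $C_G$ absorbing $C_G'$ together with the minor discrepancy between $\lceil \gamma n \rceil - 1$ and $\gamma n$.

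There is no substantive obstacle: the only ingredients used are the standard fact that a map from a free module factoring through a surjection is determined by a map from the image, and the trivial bound on $|\Hom_R(H, ({}^\sigma G)^*)|$ by a constant depending only on $G$. Both are insensitive to whether $K/\Q_p$ is of type I or type II, so a single proof handles both cases simultaneously.
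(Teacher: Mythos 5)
Your argument is correct and is exactly the standard counting proof of Wood's \cite[Lemma 3.1]{Woo17}, which is what the paper invokes for Lemma \ref{lem5c1} (the paper gives no separate proof, noting the argument carries over verbatim since ramification plays no role). Your two key observations --- that weakness persists under enlarging $\nu$, so one may take $\left| \nu \right| = \left\lceil \gamma n \right\rceil - 1$, and that the kernel equality forces $C\mid_{V_{\setminus \nu}}$ to factor through $F(V_{\setminus \nu})$, leaving at most a $G$-dependent constant of choices there and $\left| G \right|^{\left| \nu \right|}$ choices on $V_{\nu}$ --- are precisely the steps of that proof.
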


\begin{lemma} \label{lem5c2}
If $F \in \Hom_R(V, G)$ is a code of distance $\delta n$ and $C \in \Hom_R(V, ({}^{\sigma}G)^*)$ is $\gamma$-robust for $F$, then
$$
\# \left\{ (i, j) : i \leq j \text{ and } E(C, F, i, j) \neq 0 \right\} \geq \frac{\gamma \delta n^2}{2 \left| G \right|^2}.
$$
\end{lemma}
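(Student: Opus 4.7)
The plan is to follow the contrapositive argument of \cite[Lemma 3.5]{Woo17} transported to the ramified Hermitian setting. Assume $N := \#\{(i,j) : i \leq j, \, E(C,F,i,j) \neq 0\}$ is less than $\gamma \delta n^2 /(2|G|^2)$; the goal is to produce $\nu \subset [n]$ with $|\nu| < \gamma n$ such that $\ker(\phi_{C,F}|_{V_{\setminus \nu}}) = \ker(F|_{V_{\setminus \nu}})$, witnessing that $C$ is $\gamma$-weak for $F$ and contradicting robustness.

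The combinatorial core is to locate a large subset $\tau = [n] \setminus \nu$ on which $E$ vanishes identically on $\tau \times \tau$ in the ordered sense. Thinking of the non-zero entries of $E$ as edges (with self-loops allowed on the diagonal) in a graph $H$ on $[n]$, robustness translates to $H$ having vertex-cover number at least $\gamma n$. An averaging argument over subsets of $[n]$ of size $\lfloor \gamma n \rfloor$, combined with the code hypothesis that $FV_{\setminus \sigma} = G$ for every $|\sigma| < \delta n$, amplifies the naive matching lower bound to the desired quadratic bound $\gamma \delta n^2 /(2|G|^2)$: intuitively, each non-zero $E$-entry must be "protected" against removal by any $\nu$ of size $<\gamma n$, and the code property forces the protecting structure to spread over $\Omega(\delta n / |G|^2)$ additional indices per non-zero entry.

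Once such a $\tau$ is produced, one applies the identities $m_F(C)(v_j, v_i) = E(C,F,i,j)$ for $i < j$ and $m_F(C)(v_i, v_i) = 2 E(C,F,i,i)$, together with Hermiticity $m_F(C)(v, w) = \sigma(m_F(C)(w, v))$, to conclude $m_F(C)|_{V_\tau \times V_\tau} = 0$. Then for $u \in V_\tau$ with $Fu = 0$ and any $w \in V_\tau$,
$$
0 = m_F(C)(w, u) = C(w)(F(u)) + \sigma(C(u)(F(w))) = \sigma(C(u)(F(w))),
$$
so $C(u)(F(w)) = 0$ for all $w \in V_\tau$; since $FV_\tau = G$, this forces $C(u) = 0$. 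Hence $\ker F|_{V_\tau} \subset \ker C|_{V_\tau}$, which is exactly the $\gamma$-weakness needed.

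The main obstacle is the quantitative amplification from the matching/vertex-cover bound to the quadratic count $\gamma \delta n^2/(2|G|^2)$; this is where the code hypothesis on $F$ enters essentially. Apart from this, the handling of the conjugate-linear structure in $\Hom_R(V, ({}^{\sigma}G)^*)$ introduces extra $\sigma$'s into the computation of $m_F(C)$, but since $\sigma$ is an involution on $R$ preserving the zero set, vanishing patterns behave exactly as in the symmetric setting of \cite{Woo17}. The harmless factor of $2$ on the diagonal of $t(\phi_{C,F}(v_i), \phi_{F,C}(v_i))$ noted just before the lemma affects only the value, not the support, of each entry, so the combinatorial portion of Wood's proof transports verbatim.
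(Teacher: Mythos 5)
The decisive step in your sketch is missing. You reduce the lemma to the claim that fewer than $\gamma\delta n^2/(2|G|^2)$ nonzero entries of $E$ would yield a vertex cover $\nu$ of the graph of nonzero entries with $|\nu|<\gamma n$, and you yourself flag the passage from the linear (matching/vertex-cover) scale to the quadratic count as ``the main obstacle''; no argument is given for it, and no purely graph-theoretic averaging can supply it: a perfect matching has only $n/2$ edges but vertex-cover number $n/2$, so ``few edges implies a cover of size $<\gamma n$'' is false without exploiting the algebraic structure. Your translation ``robustness $\Rightarrow$ vertex-cover number $\geq\gamma n$'' also tacitly uses $FV_{\setminus\nu}=G$ for a cover $\nu$ known only to satisfy $|\nu|<\gamma n$, which needs $\gamma\leq\delta$ (the code property applies only to sets of size $<\delta n$), an assumption the lemma does not make. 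A smaller slip: in the ramified case the paper defines $m_F(C)(v_i,v_i):=E(C,F,i,i)=T(e_{ii})$, not $2E(C,F,i,i)$, and at $p=2$ the factor $2$ is not harmless if it is used to detect vanishing.

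The ingredient you are missing --- and the source of the $|G|^2$ in the bound --- is the pigeonhole on the at most $|G|^2$ values of $\phi_{F,C}(v_i)\in G\oplus({}^{\sigma}G)^*$, together with the fact that whether $E(C,F,i,j)\neq 0$ depends only on the values $\phi_{F,C}(v_i)$ and $\phi_{F,C}(v_j)$. This is how the proof of \cite[Lemma 3.5]{Woo17}, which the paper invokes verbatim for this lemma, actually runs: let $S$ be the set of indices whose value-class has size $<\gamma n/|G|^2$, so $|S|<\gamma n$; robustness applied once to $\nu=S$ produces $u\in V_{\setminus S}$ with $F(u)=0$ and $C(u)\neq 0$; the code property forces at least $\delta n$ indices $k$ with $C(u)(F(v_k))\neq 0$ (otherwise $C(u)$ would annihilate $FV_{\setminus K}=G$); for each such $k$, expanding over the support of $u$ produces some $i\notin S$ pairing nontrivially with $k$, and since nonvanishing depends only on value-classes, every index in the class of $i$ (at least $\gamma n/|G|^2$ of them) pairs nontrivially with $k$ as well, with routine care for diagonal entries and the factor $2$ when $p=2$. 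Summing over the $\geq\delta n$ indices $k$ and dividing by $2$ for double counting gives $\gamma\delta n^2/(2|G|^2)$. Your closing computation (vanishing of $m_F(C)$ on $V_\tau\times V_\tau$ plus $FV_\tau=G$ forces $\ker F|_{V_\tau}\subseteq\ker\phi_{C,F}|_{V_\tau}$) is correct, but it is attached to an unproved, and as stated unprovable, combinatorial reduction, so the proposal has a genuine gap.
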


Recall that $\HH(V)$ denotes the set of Hermitian pairings on $V=R^n$. An element $f \in \HH(V)$ is uniquely determined by the values $f(v_j, v_i)$ for $1 \leq i \leq j \leq n$. Define a map
$$
m_F : \Hom_R(V, ({}^{\sigma}G)^*) \rightarrow \HH(V)
$$
by
$$
m_F(C)(v_j, v_i) := E(C, F, i, j)
$$
for every $i \leq j$. Let $m' :=2m-1$ if $K/\Q_p$ is of type I and $m' := 2m$ if $K/\Q_p$ is of type II. Let $e_1, \cdots, e_r$ be the canonical generators of an $R$-module $G \cong \prod_{i=1}^{r} R/\pi^{\lambda_i}R$ and $e_1^*, \cdots, e_r^*$ be generators of $G^*$ given by $e_i^*(\sum_{j}a_je_j) = \pi^{m' - \lambda_i}a_i$ for $a_1, \cdots, a_r \in R$. Consider the following elements in $\Hom_R(V, ({}^{\sigma}G)^*)$ for a given $F$. 
\begin{itemize}
    \item For every $i<j$ and $c \in R/\pi^{\lambda_j}R$, $\displaystyle \alpha_{ij}^{c} := \frac{ce_i^*(F) ({}^{\sigma}e_j^*)}{\pi^{m' - \lambda_i}} - \frac{\sigma(c)e_j^*(F) ({}^{\sigma}e_i^*)}{\sigma(\pi)^{m' - \lambda_i}}$.
    
    \item The definition of $\alpha_i^{d}$ for $d \in R_1 / p^{\left \lfloor \frac{\lambda_i}{2} \right \rfloor}R_1$ depends on the type of $K/\Q_p$. 
    
    \begin{itemize}
        \item (Type I) For every $i$ and $d \in R_1 / p^{\left \lfloor \frac{\lambda_i}{2} \right \rfloor} R_1$, $\displaystyle \alpha_i^{d} := \frac{d \pi e_i^*(F) ({}^{\sigma}e_i^*)}{p^{\left \lceil \frac{m'-\lambda_i}{2} \right \rceil}}$.
        
        \item (Type II) For every $i$ and $d \in R_1 / p^{\left \lfloor \frac{\lambda_i}{2} \right \rfloor} R_1$, $\displaystyle \alpha_i^{d} := \frac{d (1 - \pi) e_i^*(F) ({}^{\sigma}e_i^*)}{p^{\left \lfloor \frac{m'-\lambda_i}{2} \right \rfloor}}$.
    \end{itemize}
\end{itemize}

\begin{lemma} \label{lem5d1}
The elements $\alpha_{ij}^{c}$ and $\alpha_i^{d}$ are contained in $\Hom_R(V, ({}^{\sigma}G)^*)$.
\end{lemma}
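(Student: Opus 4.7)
The plan is to verify, for each of $\alpha_{ij}^{c}$ and $\alpha_i^{d}$, three things in turn: that the scalar coefficient multiplying the torsion element ${}^{\sigma}e_{\bullet}^*$ is a well-defined element of $R$ (the denominator divides the numerator); that the resulting map is $R$-linear in $v \in V$; and that the map descends to a well-defined function of $c \in R/\pi^{\lambda_j}R$, respectively $d \in R_1/p^{\lfloor \lambda_i/2\rfloor}R_1$.

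For $\alpha_{ij}^{c}$, I would first use that $e_i^*$ takes values in $\pi^{m'-\lambda_i}R$, so that $e_i^*(Fv)/\pi^{m'-\lambda_i} \in R$; since $\sigma(\pi)$ is also a uniformizer, the second summand likewise gives a well-defined element of $R$. The product then lies in $({}^{\sigma}G)^*$ after applying the $\pi^{\lambda_j}$-torsion (resp.\ $\pi^{\lambda_i}$-torsion) element ${}^{\sigma}e_j^*$ (resp.\ ${}^{\sigma}e_i^*$). For well-definedness modulo $\pi^{\lambda_j}$, replacing $c$ by $c+\pi^{\lambda_j}c'$ changes the first summand by a $\pi^{\lambda_j}$-multiple, which is killed by ${}^{\sigma}e_j^*$; for the second summand, using $\lambda_j \leq \lambda_i$ together with the fact that $\pi$ and $\sigma(\pi)$ are associates in $R$, the added term becomes a $\pi^{\lambda_i}$-multiple applied to ${}^{\sigma}e_i^*$, which vanishes. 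The $R$-linearity is the same calculation as in Lemma~\ref{lem4g}.

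For $\alpha_i^{d}$, the key input is the identity $p = u\pi\sigma(\pi)$ for a unit $u \in \OO^\times$, which holds in both types (in type I since $\sigma(\pi)=-\pi$; in type II from $\pi^2-2\pi = 2u$ together with $\sigma(\pi)=2-\pi$). This gives $v_{\pi}(p^k) = 2k$, so that in type I the denominator $p^{\lceil(m'-\lambda_i)/2\rceil}$ has $\pi$-adic valuation $m'-\lambda_i$ or $m'-\lambda_i+1$ according to the parity of $m'-\lambda_i$, while the numerator $d\pi e_i^*(Fv)$ has $\pi$-adic valuation at least $m'-\lambda_i+1$; a short case analysis on this parity shows the scalar lies in $R$. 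The analogous check works in type II, using additionally that $1-\pi \in \OO^\times$. For well-definedness modulo $p^{\lfloor \lambda_i/2\rfloor}$, the same valuation count shows that replacing $d$ by $d + p^{\lfloor\lambda_i/2\rfloor}$ multiplies the scalar by something of $\pi$-adic valuation exactly $\lambda_i$, which annihilates the $\pi^{\lambda_i}$-torsion element ${}^{\sigma}e_i^*$. Finally, the $R$-linearity of $\alpha_i^d$ is proved verbatim as in Lemma~\ref{lem4g}.

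The main obstacle is the $\pi$-adic valuation bookkeeping for $\alpha_i^d$, especially in type I where ceiling/floor functions create parity-dependent sub-cases. Once the identity $p = u\pi\sigma(\pi)$ is extracted, each of these sub-cases reduces to a one-line valuation count and presents no further difficulty.
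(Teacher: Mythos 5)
Your proposal is correct and takes essentially the same route as the paper: both arguments check that the scalar lies in $R$ by a $\pi$-adic valuation count (the paper's floor/ceiling identities are exactly your parity bookkeeping, resting on $v_\pi(p)=2$, i.e.\ $p=u\pi\sigma(\pi)$), that the map descends to the quotients in $c$ and $d$ because the discrepancy has $\pi$-valuation at least $\lambda_j$ (resp.\ $\lambda_i$) and hence kills the torsion element ${}^{\sigma}e_j^*$ (resp.\ ${}^{\sigma}e_i^*$), and that $R$-linearity is the same computation as in Lemma~\ref{lem4g}. The only nitpick is that the discrepancy's valuation is \emph{at least} $\lambda_i$ rather than exactly $\lambda_i$, which is all the annihilation step needs; otherwise you even spell out the $\alpha_{ij}^{c}$ case that the paper dismisses as ``the same way.''
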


\begin{proof}
We will prove the lemma for $\alpha_i^{d}$. The proof for $\alpha_{ij}^c$ can be done by the same way.
\begin{itemize}
    \item ($K/\Q_p$ is of type I) Since $\displaystyle \frac{\pi e_i^*(Fv)}{p^{\left \lceil \frac{m'-\lambda_i}{2} \right \rceil}} \in \pi^{1 + (m' - \lambda_i) - 2 \left \lceil \frac{m'-\lambda_i}{2} \right \rceil} R \subseteq R$ for every $v \in V$, we have $\alpha_{i}^{d} \in \Hom_{\Z}(V, ({}^{\sigma}G)^*)$ for $d \in R_1$. 
    Also, ${}^{\sigma}e_i^* \in ({}^{\sigma}G)^*$ is a $\pi^{\lambda_i}$-torsion element and
    $$
    1 + (m' - \lambda_i) - 2 \left \lceil \frac{m'-\lambda_i}{2} \right \rceil + 2 \left \lfloor \frac{\lambda_i}{2} \right \rfloor 
    = - \lambda_i - 2 \left \lceil \frac{-1-\lambda_i}{2} \right \rceil + 2 \left \lfloor \frac{\lambda_i}{2} \right \rfloor 
    \geq \lambda_i
    $$
    so $\alpha_{i}^{d} \in \Hom_{\Z}(V, ({}^{\sigma}G)^*)$ for $d \in R_1/p^{\left \lfloor \frac{\lambda_i}{2} \right \rfloor}R_1$ is well-defined. For every $r \in R$, $v \in V$ and $w \in {}^{\sigma}G$, we have
$$
\alpha_i^{d}(rv)(w) 
= \frac{d \pi e_i^*(F(rv)) \sigma(e_i^*(w))}{p^{\left \lceil \frac{m'-\lambda_i}{2} \right \rceil}}
= r \frac{d \pi e_i^*(Fv) \sigma(e_i^*(w))}{p^{\left \lceil \frac{m'-\lambda_i}{2} \right \rceil}}
$$
and
$$
(r \cdot \alpha_i^{d}(v))(w) 
= (\alpha_i^{d}(v))(\sigma(r) w) 
= \frac{d \pi e_i^*(Fv)) \sigma(e_i^*(\sigma(r)w))}{p^{\left \lceil \frac{m'-\lambda_i}{2} \right \rceil}}
= r \frac{d \pi e_i^*(Fv) \sigma(e_i^*(w))}{p^{\left \lceil \frac{m'-\lambda_i}{2} \right \rceil}}
$$
so $\alpha_i^{d}$ is $R$-linear. 

    \item ($K/\Q_p$ is of type II) Since $\displaystyle 
    \frac{d (1 - \pi) e_i^*(Fv)}{p^{\left \lfloor \frac{m'-\lambda_i}{2} \right \rfloor}} \in \pi^{(m' - \lambda_i) - 2 \left \lfloor \frac{m'-\lambda_i}{2} \right \rfloor} R \subseteq R$ for every $v \in V$, we have $\alpha_{i}^{d} \in \Hom_{\Z}(V, ({}^{\sigma}G)^*)$ for $d \in R_1$. 
    Also, ${}^{\sigma}e_i^* \in ({}^{\sigma}G)^*$ is a $\pi^{\lambda_i}$-torsion element and
    $$
    (m' - \lambda_i) - 2 \left \lfloor \frac{m'-\lambda_i}{2} \right \rfloor + 2 \left \lfloor \frac{\lambda_i}{2} \right \rfloor 
    = - \lambda_i - 2 \left \lfloor \frac{-\lambda_i}{2} \right \rfloor + 2 \left \lfloor \frac{\lambda_i}{2} \right \rfloor
    \geq \lambda_i
    $$
    so $\alpha_{i}^{d} \in \Hom_{\Z}(V, ({}^{\sigma}G)^*)$ for $d \in R_1 / \pi^{\lambda_i} R_1$ is well-defined. For every $r \in R$, $v \in V$ and $w \in {}^{\sigma}G$, we have
$$
\alpha_i^{d}(rv)(w) 
= \frac{d (1 - \pi) e_i^*(F(rv)) \sigma(e_i^*(w))}{p^{\left \lfloor \frac{m'-\lambda_i}{2} \right \rfloor}}
= r \frac{d (1 - \pi) e_i^*(Fv) \sigma(e_i^*(w))}{p^{\left \lfloor \frac{m'-\lambda_i}{2} \right \rfloor}}
$$
and
$$
(r \cdot \alpha_i^{d}(v))(w) 
= (\alpha_i^{d}(v))(\sigma(r) w) 
= \frac{d (1 - \pi) e_i^*(Fv) \sigma(e_i^*(\sigma(r)w))}{p^{\left \lfloor \frac{m'-\lambda_i}{2} \right \rfloor}}
= r \frac{d (1 - \pi) e_i^*(Fv) \sigma(e_i^*(w))}{p^{\left \lfloor \frac{m'-\lambda_i}{2} \right \rfloor}}
$$
so $\alpha_i^{d}$ is $R$-linear. \qedhere
\end{itemize}
\end{proof}

\begin{lemma} \label{lem5d}
The elements $\alpha_{ij}^{c}$ and $\alpha_i^{d}$ are contained in $\ker(m_F)$.
\end{lemma}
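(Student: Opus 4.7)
The plan is to verify, for every $k \leq l$, that $E(\alpha, F, k, l) = 0$ when $\alpha \in \{\alpha_{ij}^{c}, \alpha_i^{d}\}$; since an element of $\HH(V)$ is determined by its values on pairs $(v_l, v_k)$ with $k \leq l$, and $m_F(\alpha)(v_l, v_k) = E(\alpha, F, k, l)$ by definition, this forces $m_F(\alpha) = 0$. The verification naturally splits into the off-diagonal case $k < l$, where
$$E(\alpha, F, k, l) = \alpha(v_l)(F(v_k)) + \sigma(\alpha(v_k)(F(v_l))),$$
and the diagonal case $k = l$, where $E(\alpha, F, k, k) = T(\alpha(v_k)(F(v_k)))$. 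A useful preliminary, verified by a direct calculation in each type using $\sigma(\pi) = -\pi$ or $\sigma(\pi) = 2-\pi$, is that $T \circ \sigma = T$, so $T$ annihilates every element of the form $x - \sigma(x)$.

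For $\alpha_{ij}^{c}$ I would abbreviate $a_n := e_i^*(F v_n)$ and $b_n := e_j^*(F v_n)$, expand using $({}^{\sigma}e_j^*)(F v_n) = \sigma(b_n)$, $({}^{\sigma}e_i^*)(F v_n) = \sigma(a_n)$, and the identity $\sigma(\pi^{m'-\lambda_i}) = \sigma(\pi)^{m'-\lambda_i}$. In the off-diagonal case each of the two monomials of $\alpha_{ij}^{c}(v_l)(F(v_k))$ is cancelled by exactly one monomial of $\sigma(\alpha_{ij}^{c}(v_k)(F(v_l)))$ after reindexing, so the sum vanishes. In the diagonal case $\alpha_{ij}^{c}(v_k)(F(v_k))$ has the shape $x - \sigma(x)$ with $x = c a_k \sigma(b_k)/\pi^{m'-\lambda_i}$, and the vanishing of $T$ on it is immediate from the preliminary.

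For $\alpha_i^{d}$ the structural point is that in both types the distinguished scalar factor in the definition, namely $\pi$ in type I and $1 - \pi$ in type II, is $\sigma$-anti-invariant (using $\sigma(1-\pi) = 1 - (2-\pi) = -(1-\pi)$ for type II), whereas $d \in R_1$ is $\sigma$-fixed. In the off-diagonal case this anti-invariance forces the two summands of $E(\alpha_i^{d}, F, k, l)$ to be negatives of one another and hence to cancel. In the diagonal case $a_k \sigma(a_k)$ is $\sigma$-fixed and so lies in the image of $R_1$ in $R$, which means $\alpha_i^{d}(v_k)(F(v_k))$ has the form $\pi z$ (type I) or $(1-\pi) z$ (type II) with $z \in R_1$; in the first case $T(\pi z) = 0$ directly from the formula $T(x + \pi y) = x$, and in the second case $T((1-\pi)z) = T(z + \pi(-z)) = z + (-z) = 0$.

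I do not anticipate a real obstacle: the cancellations are engineered into the sign patterns of the definitions of $\alpha_{ij}^{c}$ and $\alpha_i^{d}$, and the diagonal vanishing reduces to the short observation $T \circ \sigma = T$. The only place requiring concentration is the bookkeeping in the diagonal case, where one must recognise $\alpha_{ij}^{c}(v_k)(F(v_k))$ as $x - \sigma(x)$, and recognise $\alpha_i^{d}(v_k)(F(v_k))$ as a $\sigma$-anti-invariant scalar times an element of $R_1$, before invoking the formula for $T$.
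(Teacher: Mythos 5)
Your proposal is correct and is essentially the paper's own proof: you verify $E(\alpha,F,k,l)=0$ case by case, with the off-diagonal cases killed by the sign pattern of $\alpha_{ij}^{c}$ and by the $\sigma$-anti-invariance of $\pi$ (type I) and $1-\pi$ (type II), and the diagonal cases killed by $T(x-\sigma(x))=0$ together with $T(\pi z)=T((1-\pi)z)=0$ for $z$ in the image of $R_1$. The one step to tighten is your claim that $a_k\sigma(a_k)$ lies in the image of $R_1$ \emph{because} it is $\sigma$-fixed: for $p=2$ this implication fails (e.g.\ $\pi\, 2^{m-1}$ is $\sigma$-fixed in type II but not in the image of $R_1$), yet the needed fact is still true since $a_k\sigma(a_k)$ is the reduction of the norm $\tilde a\,\sigma(\tilde a)\in\Z_p$ of a lift $\tilde a$ of $a_k$, after which your evaluation of $T$ goes through unchanged.
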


\begin{proof}
For every $1 \leq k<l \leq n$, 
\begin{equation*}
\begin{split}
m_F(\alpha_{ij}^{c})(v_l, v_k)
& = \alpha_{ij}^{c}(v_l)(Fv_k) + \sigma(\alpha_{ij}^{c}(v_k)(Fv_l)) \\
& = \frac{ce_i^*(Fv_l) \sigma(e_j^*(Fv_k))}{\pi^{m' - \lambda_i}} - \frac{\sigma(c)e_j^*(Fv_l) \sigma(e_i^*(Fv_k))}{\sigma(\pi)^{m' - \lambda_i}} \\
& + \sigma \left ( \frac{ce_i^*(Fv_k) \sigma(e_j^*(Fv_l))}{\pi^{m' - \lambda_i}} - \frac{\sigma(c)e_j^*(Fv_k) \sigma(e_i^*(Fv_l))}{\sigma(\pi)^{m' - \lambda_i}} \right ) \\
& = 0.
\end{split}    
\end{equation*}
Since $\pi + \sigma(\pi)=0$ for type I and $(1- \pi) + \sigma(1-\pi)=0$ for type II, $m_F(\alpha_{i}^{d})(v_l, v_k)=0$ for both types. 
For every $1 \leq k \leq n$,
\begin{equation*}
\begin{split}
m_F(\alpha_{ij}^{c})(v_k, v_k)
& = T \left ( \frac{ce_i^*(Fv_k) \sigma(e_j^*(Fv_k))}{\pi^{m' - \lambda_i}} - \frac{\sigma(c)e_j^*(Fv_k) \sigma(e_i^*(Fv_k))}{\sigma(\pi)^{m' - \lambda_i}} \right ) \\
& = T(u - \sigma(u)) \;\; (u := \frac{ce_i^*(Fv_k) \sigma(e_j^*(Fv_k))}{\pi^{m' - \lambda_i}}) \\
& = 0, \\
m_F(\alpha_{i}^{d})(v_k, v_k)
& = \frac{d e_i^*(Fv_k) \sigma(e_i^*(Fv_k))}{p^{\left \lceil \frac{m'-\lambda_i}{2} \right \rceil}} T(\pi) = 0 \;\; (\text{type I}), \\
m_F(\alpha_{i}^{d})(v_k, v_k)
& = \frac{d e_i^*(Fv_k) \sigma(e_i^*(Fv_k))}{p^{\left \lfloor \frac{m'-\lambda_i}{2} \right \rfloor}} T(1-\pi) = 0 \;\; (\text{type II}). \qedhere
\end{split}    
\end{equation*}
\end{proof}

\begin{lemma} \label{lem5e}
If $FV = G$, then $\sum_{i < j} \alpha_{ij}^{c_{ij}} + \sum_{i} \alpha_i^{d_i} = 0$ if and only if each $c_{ij}$ and $d_i$ is zero. 
\end{lemma}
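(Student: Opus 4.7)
The plan is to adapt the proof of Lemma \ref{lem4i} from the unramified case, with extra bookkeeping to handle the ramification. Only the forward direction needs work. Assume $\alpha := \sum_{i<j}\alpha_{ij}^{c_{ij}} + \sum_i \alpha_i^{d_i} = 0$, and choose $x_1,\ldots,x_r \in V$ with $Fx_t = e_t$ for each $t$ (possible since $FV = G$). The strategy is to evaluate $\alpha(x_t) \in ({}^{\sigma}G)^*$ on each canonical generator $e_k$ of $G$ and solve for the coefficients one family at a time.

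Using $e_i^*(Fx_t) = \pi^{m'-\lambda_i}\delta_{it}$, only three kinds of summand contribute to $\alpha(x_t)$: the term $\alpha_{it}^{c_{it}}(x_t)$ for $i<t$, which is a scalar multiple of $({}^{\sigma}e_i^*)$; the term $\alpha_{tj}^{c_{tj}}(x_t) = c_{tj}({}^{\sigma}e_j^*)$ for $t<j$; and $\alpha_t^{d_t}(x_t)$, a scalar multiple of $({}^{\sigma}e_t^*)$. Because these three families land on pairwise distinct basis vectors of $({}^{\sigma}G)^*$, each coefficient can be isolated independently. Evaluating at $e_j$ for $t<j$ yields a relation of the form $c_{tj}\sigma(\pi)^{m'-\lambda_j} = 0$ in $R$, which forces $c_{tj} \equiv 0$ in $R/\pi^{\lambda_j}R$; letting $t$ range over $1,\ldots,r$ then recovers all $c_{ij}$.

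The heart of the argument is the diagonal piece. Evaluating $\alpha(x_t)(e_t) = 0$ yields a relation $d_t \mu_t = 0$ in $R$, where $\mu_t$ is built from $\pi$ (type I) or $1-\pi$ (type II) times $\sigma(\pi)^{m'-\lambda_t}$, divided by $p^{\lceil(m'-\lambda_t)/2\rceil}$ or $p^{\lfloor(m'-\lambda_t)/2\rfloor}$ respectively. Using the identities $\pi\sigma(\pi) = -\pi^2 \in p\OO^{\times}$ in type I, and $\pi\sigma(\pi) = -2u \in 2\Z_2^{\times}$ together with $1-\pi \in \OO^{\times}$ in type II, one rewrites $\mu_t$ as a unit in $R$ times a specific power of $\pi$. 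A short case analysis on the parity of $\lambda_t$ for each of the two types then confirms that the annihilator of $\mu_t$ inside $R_1 \subseteq R$ equals $p^{\lfloor\lambda_t/2\rfloor}R_1$, matching the domain of $d_t$ and forcing $d_t = 0$.

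The main obstacle is this last valuation check. In the unramified case, the single fact $\theta \in R^{\times}$ was enough to extract $d_t$. In the ramified setting, the asymmetric rounding conventions $\lceil\cdot\rceil$ versus $\lfloor\cdot\rfloor$ appearing in the definition of $\alpha_t^d$ were chosen precisely so that $\alpha_t^d$ is well defined on $R_1/p^{\lfloor\lambda_t/2\rfloor}R_1$, and the parity analysis must confirm that this domain is tight in all four combinations of type and parity. Once this is done, the forward implication follows exactly as in the unramified case.
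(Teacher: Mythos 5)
Your route is the same as the paper's: evaluate $\alpha(x_t)$, where $Fx_t=e_t$, against the generators $e_j$ (for $t<j$) and $e_t$, isolate the three families of terms, deduce $c_{tj}=0$ in $R/\pi^{\lambda_j}R$ from $c_{tj}\sigma(\pi)^{m'-\lambda_j}=0$ in $R$, and finish with a $\pi$-adic valuation count for $d_t$. The off-diagonal part of your sketch is correct as stated and matches the paper.

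The slip is in the diagonal coefficient, which you yourself call the heart of the argument. In type I,
\begin{equation*}
\alpha_t^{d_t}(x_t)(e_t)
=\frac{d_t\,\pi\,e_t^*(Fx_t)}{p^{\left\lceil \frac{m'-\lambda_t}{2}\right\rceil}}\cdot({}^{\sigma}e_t^*)(e_t)
=\frac{d_t\,\pi\,\pi^{m'-\lambda_t}\,\sigma(\pi)^{m'-\lambda_t}}{p^{\left\lceil \frac{m'-\lambda_t}{2}\right\rceil}},
\end{equation*}
so the coefficient carries both $e_t^*(Fx_t)=\pi^{m'-\lambda_t}$ and $({}^{\sigma}e_t^*)(e_t)=\sigma(\pi)^{m'-\lambda_t}$; the $\mu_t$ you describe keeps only the latter. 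This is not cosmetic: when $\lambda_t$ is even (type I), your $\mu_t=\pi\,\sigma(\pi)^{m'-\lambda_t}/p^{\lceil (m'-\lambda_t)/2\rceil}$ has $\pi$-valuation $1+(m'-\lambda_t)-2\left\lceil\frac{m'-\lambda_t}{2}\right\rceil=0$, hence is a unit of $R$, and its annihilator in $R_1$ is the zero ideal rather than $p^{\lfloor\lambda_t/2\rfloor}R_1$; so the deferred ``annihilator equals $p^{\lfloor\lambda_t/2\rfloor}R_1$'' check would come out false for the quantity you wrote down (and the relation $d_t\mu_t=0$ with that $\mu_t$ is in any case not what $\alpha(x_t)(e_t)=0$ yields). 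With the full coefficient, the condition $\alpha(x_t)(e_t)=0$ in $R=\OO/\pi^{m'}\OO$ becomes, writing $v_p(d_t)$ for the exponent of $p$ in $d_t$, $2v_p(d_t)+2(m'-\lambda_t)+1-2\left\lceil\frac{m'-\lambda_t}{2}\right\rceil\ge m'$, i.e. $v_p(d_t)\ge\left\lfloor\frac{\lambda_t}{2}\right\rfloor$, which is exactly the paper's computation; the type II case works the same way with $1-\pi$ and the floor, again giving $v_p(d_t)\ge\lfloor\lambda_t/2\rfloor$, hence $d_t=0$ in $R_1/p^{\lfloor\lambda_t/2\rfloor}R_1$. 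So the plan and the final target are right, but you must restore the omitted factor $\pi^{m'-\lambda_t}$ before running the parity analysis, otherwise the crucial valuation check fails.
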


\begin{proof}
Assume that $\alpha := \sum_{i < j} \alpha_{ij}^{c_{ij}} + \sum_{i} \alpha_i^{d_i}$ is zero. Choose $x_1, \cdots, x_r \in V$ such that $Fx_i = e_i$ for each $i$. For each $1 \leq t \leq r$, we have
\begin{equation*}
\begin{split}
\alpha (x_t)
& = \sum_{i < t} \alpha_{it}^{c_{it}}(x_t) + \sum_{t < j} \alpha_{tj}^{c_{tj}}(x_t) + \alpha_t^{d_t}(x_t) \\
& = \left\{\begin{matrix}
- \sum_{i < t} \frac{\sigma(c_{it}) \pi^{m'-\lambda_t}}{\sigma(\pi)^{m'-\lambda_i}} ({}^{\sigma}e_i^*) 
+ \frac{d_t \pi^{m'-\lambda_t+1}}{p^{\left \lceil \frac{m'-\lambda_t}{2} \right \rceil}} ({}^{\sigma}e_t^*) 
+ \sum_{t < j} c_{tj} ({}^{\sigma}e_j^*) & (\text{type I}) \\
- \sum_{i < t} \frac{\sigma(c_{it}) \pi^{m'-\lambda_t}}{\sigma(\pi)^{m'-\lambda_i}} ({}^{\sigma}e_i^*) 
+ \frac{d_t (1 - \pi)\pi^{m'-\lambda_t}}{p^{\left \lfloor \frac{m'-\lambda_t}{2} \right \rfloor}} ({}^{\sigma}e_t^*) 
+ \sum_{t < j} c_{tj} ({}^{\sigma}e_j^*) & (\text{type II}) 
\end{matrix}\right. \\
& = 0.
\end{split}    
\end{equation*}
For all $t<j$, $\alpha(x_t)(e_j) = \pi^{m'-\lambda_j} c_{tj} =0$ (in $R=\OO/\pi^{m'}\OO$) so $c_{tj}=0$ (in $R/\pi^{\lambda_j}R$). If $K/\Q_p$ is of type I, then
\begin{equation*}
\begin{split}
& \alpha(x_t)(e_t)= \pi^{m'-\lambda_t} \frac{d_t \pi^{m'-\lambda_t+1}}{p^{\left \lceil \frac{m'-\lambda_t}{2} \right \rceil}} = 0 \text{ in } R \\
\Leftrightarrow \;\; & 2v_p(d_t) + ((2m-1)-\lambda_t+1) - 2 \left \lceil \frac{2m-1-\lambda_t}{2} \right \rceil \geq \lambda_t \\
\Leftrightarrow \;\; & v_p(d_t) \geq \lambda_t + \left \lceil -\frac{\lambda_t+1}{2} \right \rceil = \left \lfloor \frac{\lambda_t}{2} \right \rfloor
\end{split}    
\end{equation*}
($v_p(d_t)$ denotes the exponent of $p$ in $d_t$) so $d_t=0$. Similarly, if $K/\Q_p$ is of type II, then
\begin{equation*}
\begin{split}
& \alpha(x_t)(e_t)= \pi^{m'-\lambda_t} \frac{d_t (1 - \pi)\pi^{m'-\lambda_t}}{p^{\left \lfloor \frac{m'-\lambda_t}{2} \right \rfloor}}= 0 \text{ in } R \\
\Leftrightarrow \;\; & 2v_p(d_t) + (2m-\lambda_t) - 2 \left \lfloor \frac{2m-\lambda_t}{2} \right \rfloor \geq \lambda_t \\
\Leftrightarrow \;\; & v_p(d_t) \geq \lambda_t + \left \lfloor -\frac{\lambda_t}{2} \right \rfloor = \left \lfloor \frac{\lambda_t}{2} \right \rfloor
\end{split}    
\end{equation*}
so $d_t=0$.
\end{proof}

\begin{definition} \label{def5f}
An element $C \in \Hom_R(V, ({}^{\sigma}G)^*)$ is called \textit{special} if $C$ is of the form $\sum_{i < j} \alpha_{ij}^{c_{ij}} + \sum_{i} \alpha_i^{d_i}$ for some $c_{ij} \in R/\pi^{\lambda_j}R$ and $d_i \in R_1/p^{\left \lfloor \frac{\lambda_i}{2} \right \rfloor}R_1$. 
\end{definition}

For a special $C$, we have $E(C, F, i, j) = m_F(C)(v_j, v_i) = 0$ for every $i \leq j$ by Lemma \ref{lem5d} so $p_F(C)=1$. Moreover, Lemma \ref{lem5e} implies that the number of special $C$ is 
$$
\prod_{i<j} \left| R/\pi^{\lambda_j}R \right| \times \prod_{i} \left| R_1/p^{\left \lfloor \frac{\lambda_i}{2} \right \rfloor}R_1 \right| = p^{\sum_{i=1}^{r}\left ( (i-1)\lambda_i + \left \lfloor \frac{\lambda_i}{2}\right \rfloor \right )}
$$
when $FV=G$. This number coincides with the moment appears in Theorem \ref{thm2g}(2). Now we prove that for a non-special $C$, there are linearly many non-zero coefficients $E(C, F, i, j)$.

\begin{proposition} \label{prop5g}
If $F \in \Hom_R(V, G)$ is a code of distance $\delta n$ and $C \in \Hom_R(V, ({}^{\sigma}G)^*)$ is not special, then
$$
\# \left\{ (i, j) : i \leq j \text{ and } E(C, F, i, j) \neq 0 \right\} \geq \frac{\delta n}{2}.
$$
\end{proposition}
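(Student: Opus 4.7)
The strategy is to mimic the proof of Proposition \ref{prop4k} in the unramified case and adjust the counts to the ramified setting. Assume for contradiction that $\left| \eta \right| < \delta n / 2$ for $\eta := \{(i, j) : i \leq j \text{ and } E(C, F, i, j) \neq 0\}$, and let $\nu$ be the set of coordinates appearing in some pair of $\eta$; then $\left| \nu \right| < \delta n$. Since $F$ is a code of distance $\delta n$, pick $\tau \subset [n] \setminus \nu$ with $\left| \tau \right| = r$ and $FV_\tau = G$. Lift $e_1, \ldots, e_r$ to a basis $w_1, \ldots, w_r$ of $V_\tau$, and define the new basis by $z_i = v_i$ for $i \notin \tau$ and $z_{\tau_j} = w_j$ for $1 \leq j \leq r$, with dual basis $z_1^*, \ldots, z_n^*$. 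Introduce the subgroups $Z_1(V) \subseteq Z_2(V)$ of $\HH(V)$ consisting of Hermitian pairings that vanish on $V \times V_\tau$ and on $V_\tau \times V_\tau$ respectively, let $\HH_i(V) := \HH(V)/Z_i(V)$, and set $m_F^i$ to be the composition of $m_F$ with the quotient $\HH(V) \to \HH_i(V)$. Since no pair in $\eta$ meets $\tau$, $m_F(C)$ vanishes on $V \times V_\tau$ and lies in $Z_1(V)$, so $m_F^1(C) = 0$.

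By Lemmas \ref{lem5d} and \ref{lem5e}, every special $C$ lies in $\ker(m_F^1)$ and $\#\{\text{special } C\} = p^{\sum_i ((i-1)\lambda_i + \lfloor \lambda_i / 2 \rfloor)}$, so
\[
\left| \im(m_F^1) \right| \leq \frac{\left| \Hom_R(V, ({}^{\sigma}G)^*) \right|}{\#\{\text{special}\}} = p^{\sum_i ((n-i+1)\lambda_i - \lfloor \lambda_i / 2 \rfloor)}.
\]
To contradict the non-speciality of $C$, it suffices to show this bound is an equality, forcing $\ker(m_F^1) = \{\text{special}\}$. I would match the bound by exhibiting explicit preimages. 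For $\ell \notin \tau$, $1 \leq b \leq r$, $c \in R$, the element $c z_\ell^* \otimes {}^{\sigma}e_b^*$ has image in $\ker(\HH_1(V) \to \HH_2(V))$ of the form $f_{\ell, \tau_b}^{\bullet}$ with coefficient ranging over $\pi^{m' - \lambda_b} R$, contributing a factor of $p^{(n-r) \sum_i \lambda_i}$. For $1 \leq b < a \leq r$, $c z_{\tau_a}^* \otimes {}^{\sigma}e_b^*$ yields an $f_{\tau_b \tau_a}^{\bullet}$ class in $\HH_2(V)$ contributing a factor of $p^{\lambda_b}$. For the diagonal classes $g_{\tau_a}^{\bullet}$, combining $c z_{\tau_a}^* \otimes {}^{\sigma}e_a^*$ with the type-specific $\alpha_{\tau_a}^d$ (reduced modulo $Z_2$) should realize $p^{\lceil \lambda_a / 2 \rceil}$ distinct values. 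Multiplying,
\[
\left| \im(m_F^1) \right| \geq p^{(n-r)\sum_i \lambda_i + \sum_i ((r-i) \lambda_i + \lceil \lambda_i/2 \rceil)} = p^{\sum_i ((n-i+1)\lambda_i - \lfloor \lambda_i / 2 \rfloor)},
\]
so equality holds, $C$ is special, and we have the desired contradiction.

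The hardest step will be the diagonal count $p^{\lceil \lambda_a / 2 \rceil}$ on $V_\tau \times V_\tau$. In the unramified case the diagonal of any Hermitian pairing lies in $R^\sigma = R_1$ and the map $c \mapsto c + \sigma(c)$ surjects onto $R_1$, so the single family $c z_{\tau_a}^* \otimes {}^{\sigma}e_a^*$ suffices. In the ramified case the analogous map $c \mapsto c \sigma(\pi)^{m' - \lambda_a} + \sigma(c) \pi^{m' - \lambda_a}$ captures only a strict subgroup of the relevant piece of $R^\sigma$ (the parity of $m' - \lambda_a$ determines whether the image lies in the $c + \sigma(c)$ or the $c - \sigma(c)$ part), so one must augment the construction using the $\pi / p^{\lceil (m' - \lambda_a)/2 \rceil}$ (type I) or $(1 - \pi) / p^{\lfloor (m' - \lambda_a)/2 \rfloor}$ (type II) normalization built into $\alpha_{\tau_a}^d$ to recover the remaining factor of $p$. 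The verification breaks into cases by type and by the parity of $\lambda_a$, and this case analysis is the main technical work of the proof.
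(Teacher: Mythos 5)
Your overall architecture is the paper's: the same reduction to $\tau$, the same $Z_1(V)\subseteq Z_2(V)$ and $m_F^1, m_F^2$, the same off-diagonal families $c z_{\tau_a}^*\otimes {}^{\sigma}e_b^*$ ($b<a$) and $c z_{\ell}^*\otimes {}^{\sigma}e_b^*$ ($\ell\notin\tau$), and the same numerology; your target exponent $\sum_i((n-i+1)\lambda_i-\lfloor \lambda_i/2\rfloor)$ agrees with the paper's $\sum_i((n-i)\lambda_i+\lceil\lambda_i/2\rceil)$. The gap is exactly in the step you single out as the crux, the diagonal count, and your proposed fix cannot work. First, you compute the diagonal via the trace-type expression $c\,\sigma(\pi)^{m'-\lambda_a}+\sigma(c)\,\pi^{m'-\lambda_a}=2T(\sigma(\pi)^{m'-\lambda_a}c)$, i.e.\ you are implicitly using the Section~\ref{Sec4} definition $m_F(C)(x,y)=C(x)(F(y))+\sigma(C(y)(F(x)))$. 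But in the ramified case the quantity the proposition is about is $E(C,F,i,i)=T(e_{ii})$, not $e_{ii}+\sigma(e_{ii})$ (this is forced by the coefficient of $Y_{ii}$ in $T(C(FX))$, see Remark~\ref{rmk5b}), and the paper accordingly defines $m_F(C)(v_i,v_i):=T(e_{ii})$. With your trace-type diagonal the identification of $\ker(m_F^1)$ with the special $C$'s is no longer the statement you need, and at $p=2$ (type I with $K=\Q_2(\sqrt{2u})$, and all of type II) the extra factor $2$ genuinely shrinks the diagonal image by an index-$2$ subgroup, so your lower bound on $\left|\im(m_F^1)\right|$ falls short and no equality, hence no contradiction, is obtained.

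Second, the augmentation you propose --- adding $\alpha_{\tau_a}^{d}$ ``reduced modulo $Z_2$'' to recover the missing factor of $p$ --- is vacuous: by Lemma~\ref{lem5d} the elements $\alpha_i^{d}$ (and $\alpha_{ij}^{c}$) lie in $\ker(m_F)$, and indeed they must, since the upper bound $\left|\im(m_F^1)\right|\leq \left|\Hom_R(V,({}^{\sigma}G)^*)\right|/\#\{\text{special }C\}$ is exactly the statement that the specials sit inside $\ker(m_F^1)$; the same vanishing holds for the trace-type pairing, because $\pi$ (type I) and $1-\pi$ (type II) are anti-fixed by $\sigma$ while $e_i^*(Fx)\sigma(e_i^*(Fx))$ is fixed. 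So special elements can never enlarge the image, and no case analysis by parity rescues this. The paper's resolution is that no augmentation is needed: with the diagonal of $m_F$ defined through $T$, the single family $c z_{\tau_a}^*\otimes{}^{\sigma}e_a^*$ already produces $g_{\tau_a}^{T(\pi^{m'-\lambda_a}\sigma(c))}$, and the computation
$$
\left\{ T(\pi^{m'-\lambda_a}\sigma(c)) : c\in R\right\}=p^{\,m-\left\lceil \lambda_a/2\right\rceil}R_1
$$
(valid for both types, using $\lceil\cdot\rceil$ or $\lfloor\cdot\rfloor$ according to type) yields the full $p^{\lceil\lambda_a/2\rceil}$ diagonal values; that computation, not a parity case split with special elements, is the actual content of the hard step. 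For odd $p$ your version happens to work because $2$ is a unit, but the ramified case you must cover includes $p=2$, where your argument as written fails.
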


\begin{proof}
Define $f_{ij}^{c} = f_{ji}^{\sigma(c)}$ ($c \in R$), $g_i^{d}$ ($d \in R_1$), $\HH_t(V)$ and $m_F^t : \Hom_R(V, (^{\sigma}G)^*) \rightarrow \HH_t(V)$ ($t=1, 2$) as in the proof of Proposition \ref{prop4k}. Following the argument of the of Proposition \ref{prop4k}, the proof reduces to show that the inequality
\begin{equation} \label{eq5e}
\left| \im(m_F^1) \right| 
\leq \frac{\left| \Hom_R(V, ({}^{\sigma}G)^*) \right|}{\left| \left\{ \text{special } C \right\} \right|}
= p^{\sum_{i=1}^{r} \left ( (n-i) \lambda_i + \left \lceil \frac{\lambda_i}{2} \right \rceil \right ) }
\end{equation}
is actually an equality. 

\begin{itemize}
    \item Since $V$ is a free $R$-module, the natural map $\Hom_R(V, R) \otimes ({}^{\sigma}G)^* \rightarrow \Hom_R(V, ({}^{\sigma}G)^*)$ is an isomorphism. Thus for $c \in R$ and $1 \leq b \leq a \leq r$, $c z_{\tau_a}^* \otimes {}^{\sigma}e_b^*$ is an element of $\Hom_R(V, ({}^{\sigma}G)^*)$ and
\begin{equation*}
m_F(c z_{\tau_a}^* \otimes {}^{\sigma}e_b^* )(z_{\tau_i}, z_{\tau_j})
= \left\{\begin{matrix}
c \delta_{ai} \delta_{b j} \sigma(\pi)^{m'-\lambda_b} + \sigma(c)\delta_{aj} \delta_{bi} \pi^{m'-\lambda_b} & (i<j) \\
T(c \delta_{ai} \delta_{bi} \sigma(\pi)^{m'-\lambda_b}) & (i=j)
\end{matrix}\right.
\end{equation*}
so
$$
m_F^2(c z_{\tau_a}^* \otimes {}^{\sigma}e_b^* ) = \left\{\begin{matrix}
f_{\tau_b \tau_a}^{\pi^{m'-\lambda_b} \sigma(c)} & (b<a) \\
g_{\tau_b}^{T(\pi^{m'-\lambda_b} \sigma(c))} & (b=a)
\end{matrix}\right.
$$
as elements in $\HH_2(V)$. Since we have
$$
\left\{ T(\pi^{m'-\lambda_b} \sigma(c)) : c \in R \right\} = \left\{\begin{matrix}
p^{\left \lceil \frac{m'-\lambda_b}{2} \right \rceil} R_1 = p^{m - \left \lceil \frac{\lambda_b}{2} \right \rceil}R_1 & (\text{type I}) \\
p^{\left \lfloor \frac{m'-\lambda_b}{2} \right \rfloor} R_1 = p^{m - \left \lceil \frac{\lambda_b}{2} \right \rceil}R_1 & (\text{type II})
\end{matrix}\right. ,
$$
the image of $m_F^2$ contains every element of $\HH_2(V)$ of the form 
$$
f = \sum_{i<j} f_{\tau_i \tau_j}^{c_{ij}} + \sum_{i} g_{\tau_i}^{d_i}
$$
where $c_{ij} \in \pi^{m'-\lambda_i} R$ and $d_i \in p^{m - \left \lceil \frac{\lambda_i}{2} \right \rceil} R_1$. As in the proof of Lemma \ref{lem5e}, one can deduce that 
$$
\sum_{i<j} f_{\tau_i \tau_j}^{c_{ij}} + \sum_{i} g_{\tau_i}^{d_i} = 0
$$ 
if and only if each $c_{ij}$ and $d_i$ is zero. This implies that the number of $f$ in $\HH_2(V)$ of the form $\sum_{i<j} f_{\tau_i \tau_j}^{c_{ij}} + \sum_{i} g_{\tau_i}^{d_i}$ is
$$
\prod_{i<j} p^{\lambda_i} \cdot \prod_{i} p^{\left \lceil \frac{\lambda_i}{2} \right \rceil} = p^{\sum_{i=1}^{r}\left ( (r-i)\lambda_i + \left \lceil \frac{\lambda_i}{2} \right \rceil \right ) }.
$$
Thus we have
\begin{equation} \label{eq5f}
\left| \im(m_F^2) \right| \geq p^{\sum_{i=1}^{r} \left ( (r-i)\lambda_i + \left \lceil \frac{\lambda_i}{2} \right \rceil \right )}.
\end{equation}

    \item For $\ell \notin \tau$, $c \in R$ and $1 \leq b \leq r$,
\begin{equation*}
m_F(c z_{\ell}^* \otimes {}^{\sigma}e_b^*)(z_i, z_{\tau_j})
= c \sigma(\pi^{m' - \lambda_b}) \delta_{\ell i} \delta_{bj}
\end{equation*}
so
$$
m_F^1(c z_{\ell}^* \otimes {}^{\sigma}e_b^*)
= f_{\tau_b \ell}^{\pi^{m'-\lambda_b} \sigma(c)}
$$
as elements in $\HH_1(V)$. Let $S$ be the set of the elements of $\HH_1(V)$ of the form 
$$
f = \sum_{\ell \notin \tau} \sum_{b} f_{\tau_b \ell}^{c_{b \ell}}
$$
for some $c_{b \ell} \in \pi^{m'-\lambda_b} R$. Then $\left| S \right| = p^{\sum_{i=1}^{r} (n-r) \lambda_i}$ (since $\sum_{\ell \notin \tau} \sum_{b} f_{\tau_b \ell}^{c_{b \ell}} = 0$ if and only if each $c_{b \ell}$ is zero) and $S$ is contained in the kernel of the surjective homomorphism $\im(m_F^1) \rightarrow \im(m_F^2)$, which implies that
\begin{equation} \label{eq5g}
\left| \im(m_F^1) \right| \geq p^{\sum_{i=1}^{r} (n-r) \lambda_i} \left| \im(m_F^2) \right|.
\end{equation}
\end{itemize}
By the equations (\ref{eq5f}) and (\ref{eq5g}), the inequality (\ref{eq5e}) should be an equality. 
\end{proof}

Now we compute the moments of the cokernel of an $\varepsilon$-balanced matrix $X \in \HH_n(\OO)$. As in the unramified case, we provide some details of the proof for the convenience of the readers. For an $\OO$-module $G$ of type $\lambda = (\lambda_1 \geq \cdots \geq \lambda_r)$, let $M_G$ be the number of special $C$ for a surjective $F \in \Hom_R(V, G)$, i.e.  $M_G := p^{\sum_{i=1}^{r}\left ( (i-1)\lambda_i + \left \lfloor \frac{\lambda_i}{2}\right \rfloor \right )}$. 

\begin{lemma} \label{lem5x1}
For given $0 < \varepsilon < 1$, $\delta >0$ and $G$, there are $c, K_0 > 0$ such that the following holds: Let $X \in \HH_n(R)$ be an $\varepsilon$-balanced matrix, $F \in \Hom_R(V, G)$ be a code of distance $\delta n$ and $A \in \Hom_R((^{\sigma}V)^*, G)$. Then for each $n$, 
$$
\left| \PP(FX=0) - M_G \left| G \right|^{-n} \right|
\leq \displaystyle \frac{K_0 e^{-cn}}{\left| G \right|^n}
$$
and
$$
\PP(FX=A) \leq K_0 \left| G \right|^{-n}.
$$
\end{lemma}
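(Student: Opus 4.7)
The plan is to mimic the proof of Lemma \ref{lem4x1} in the unramified case, which itself follows \cite[Lemma 4.1]{Woo17}. Starting from equation (\ref{eq5d}) with $FX$ replaced by $FX - A$, I would write
$$
\PP(FX=A) = \frac{1}{\left| G \right|^n} \sum_{C \in \Hom_R(V, ({}^{\sigma}G)^*)} \EE(\zeta^{T(C(-A))}) \, p_F(C),
$$
pick an auxiliary parameter $\gamma \in (0, \delta)$, and partition the $C$-sum into three sets: $S_1$, the special $C$; $S_2$, the non-special $C$ that are $\gamma$-weak for $F$; and $S_3$, the $\gamma$-robust $C$.

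The contribution from $S_1$ is clean: Lemma \ref{lem5d} gives $E(C,F,i,j)=0$ for all $i\le j$, hence $p_F(C)=1$, and Lemma \ref{lem5e} shows $|S_1|=M_G$. Thus $\sum_{C\in S_1} p_F(C) = M_G$ when $A=0$, and this partial sum is bounded in modulus by $M_G$ for arbitrary $A$ because $|\EE(\zeta^{T(C(-A))})|\le 1$. For $C\in S_2$, Proposition \ref{prop5g} supplies at least $\delta n/2$ non-zero coefficients $E(C,F,i,j)$, so Remark \ref{rmk5b} yields $|p_F(C)|\le \exp(-\varepsilon\delta n/(2p^{2m}))$; combining this with the enumeration $|S_2|\le C_G\binom{n}{\lceil\gamma n\rceil-1}\left|G\right|^{\gamma n}$ from Lemma \ref{lem5c1} and choosing $\gamma$ sufficiently small so that $\left|G\right|^{\gamma n}$ is swallowed by the exponential decay gives a contribution bounded by $K_0 e^{-cn}$. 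For $C\in S_3$, Lemma \ref{lem5c2} guarantees at least $\gamma\delta n^2/(2\left|G\right|^2)$ non-zero coefficients, so Remark \ref{rmk5b} forces $|p_F(C)|\le\exp(-\varepsilon\gamma\delta n^2/(2p^{2m}\left|G\right|^2))$; summed against the crude count $|S_3|\le\left|G\right|^n$, this is exponentially small compared to $\left|G\right|^n$.

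Assembling the three estimates proves both inequalities. For the first, set $A=0$: the $S_1$-contribution equals $M_G$ exactly, while the $S_2$- and $S_3$-contributions are $O(e^{-cn})$, giving $\bigl|\PP(FX=0)\left|G\right|^n - M_G\bigr|\le K_0 e^{-cn}$ after dividing by $\left|G\right|^n$. For the second, the uniform bound $|\EE(\zeta^{T(C(-A))})|\le 1$ turns the same estimates into $\PP(FX=A)\le K_0\left|G\right|^{-n}$ for arbitrary $A$, since the $S_1$-contribution is at most $M_G\left|G\right|^{-n}$ and the other two are exponentially smaller. The main obstacle in this program has already been overcome in Proposition \ref{prop5g}: once every non-special $C$ is known to produce linearly many non-zero $E(C,F,i,j)$, the Fourier argument of \cite{Woo17}, routed here through the trace surrogate $T$ tailored to each ramification type via Remark \ref{rmk5b}, goes through with only cosmetic changes.
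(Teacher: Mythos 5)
Your proposal is correct and follows essentially the same route as the paper: the same Fourier expansion with $T$, the same split into special, non-special $\gamma$-weak, and $\gamma$-robust $C$, with Lemmas \ref{lem5d}, \ref{lem5e}, \ref{lem5c1}, \ref{lem5c2}, Remark \ref{rmk5b} and Proposition \ref{prop5g} playing exactly the roles you assign them, and the final assembly as in \cite[Lemma 4.1]{Woo17}. Your write-up merely makes explicit the choice of small $\gamma$ and the bookkeeping that the paper delegates to Wood's argument.
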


\begin{proof}
By the equations (\ref{eq5b}) and (\ref{eq5c}) (replace $FX$ by $FX-A$), we have
\begin{equation} \label{eq5h}
\begin{split}
\PP (FX = A)
& = \frac{1}{\left| G \right|^n}\sum_{C} \EE (\zeta^{T(C(FX-A))}) \\
& = \frac{1}{\left| G \right|^n}\sum_{C} \EE (\zeta^{T(C(-A))}) \EE (\zeta^{\tr(C(FX))}) \\
& = \frac{1}{\left| G \right|^n} \sum_{C} \EE (\zeta^{T(C(-A))}) p_F(C).
\end{split}
\end{equation}
For $\gamma \in (0, \delta)$, we break the sum into $3$ pieces:
\begin{equation*}
\begin{split}
S_1 & := \left\{ C \in \Hom_R(V, ({}^{\sigma}G)^*) : C \text{ is special for } F \right\}, \\
S_2 & := \left\{ C \in \Hom_R(V, ({}^{\sigma}G)^*) : C \text{ is not special for } F \text{ and } \gamma \text{-weak for } F \right\}, \\
S_3 & := \left\{ C \in \Hom_R(V, ({}^{\sigma}G)^*) : C \text{ is } \gamma \text{-robust for } F \right\}.
\end{split}    
\end{equation*}

\begin{enumerate}[label=(\alph*)]
    \item $C \in S_1$: By Lemma \ref{lem5e}, $\left| S_1 \right| = M_G$. Since $p_F(C)=1$ for $C \in S_1$ by Lemma \ref{lem5d}, we have $\sum_{C \in S_1} \EE (\zeta^{C(-A)}) p_F(C) = M_G$ for $A=0$ and $
    \left| \sum_{C \in S_1} \EE (\zeta^{C(-A)}) p_F(C) \right| \leq M_G$ for any $A$.
    
    \item $C \in S_2$: By Lemma \ref{lem5c1}, $\left| S_2 \right| \leq C_G \binom{n}{\left \lceil \gamma n \right \rceil - 1} \left| G \right|^{\gamma n}$ for some constant $C_G > 0$. By Remark \ref{rmk5b} and Proposition \ref{prop5g}, we have $\left| p_F(C) \right| \leq \exp ( -\varepsilon \delta n / 2 p^{2m} )$ for every $C \in S_2$. 
    
    \item $C \in S_3$: By Remark \ref{rmk5b} and Lemma \ref{lem5c2},
    $$
    \left| \sum_{C \in S_3} \EE (\zeta^{C(-A)}) p_F(C) \right| \leq \left| G \right|^n \exp (\varepsilon \gamma \delta n^2 / 2 p^{2m} \left| G \right|^2).
    $$
\end{enumerate}
Now the proof can be completed as in \cite[Lemma 4.1]{Woo17} by applying the above computations (for a sufficiently small $\gamma$) to the equation (\ref{eq5h}).
\end{proof}

Recall that for an integer $D = \prod_{i} p_i^{e_i}$, we have defined $\ell (D) := \sum_{i} e_i$ in Section \ref{Sec4}. The \textit{depth} of $F \in \Hom_R(V, G)$ is defined exactly as in Definition \ref{def4x2}. The next lemmas are analogues of \cite[Lemma 5.2 and 5.4]{Woo17}.

\begin{lemma} \label{lem5x3}
There is a constant $K_0$ depending on $G$ such that for every $D>1$, the number of $F \in \Hom_R(V, G)$ of depth $D$ is at most
$$
K_0 \binom{n}{\left \lceil \ell(D)\delta n \right \rceil -1} \left| G \right|^n  D^{-n+\ell(D)\delta n}.
$$
\end{lemma}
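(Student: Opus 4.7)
The plan is to imitate the proof of \cite[Lemma 5.2]{Woo17} by an overcount on triples. Unpacking Definition \ref{def4x2}: if $F$ has depth $D > 1$, then there is a subset $\sigma \subset [n]$ with $\left|\sigma\right| < \ell(D)\delta n$ and an $\OO$-submodule $H \leq G$ of index exactly $D$ such that $FV_{\setminus \sigma} = H$. To obtain an upper bound on the number of such $F$, I would enumerate triples $(\sigma, H, F)$ where $\sigma \subset [n]$ satisfies $\left|\sigma\right| < \ell(D)\delta n$, $H \leq G$ has index $D$, and $F \in \Hom_R(V, G)$ merely satisfies $FV_{\setminus \sigma} \subseteq H$ (rather than equality). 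This triple-count dominates the count of depth-$D$ maps.

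For each factor: first, the number of subsets $\sigma$ with $\left|\sigma\right| < \ell(D)\delta n$ is $\sum_{j=0}^{\lceil \ell(D)\delta n\rceil-1}\binom{n}{j}$; choosing $\delta$ small enough that $\ell(\left|G\right|)\delta < 1/2$ keeps the top index bounded away from $n/2$, so a standard partial-sum estimate bounds this by an absolute constant (depending on $\delta$ and $\left|G\right|$ only) times $\binom{n}{\lceil \ell(D)\delta n\rceil - 1}$. Second, the number of index-$D$ submodules of $G$ is trivially bounded by the total number of $\OO$-submodules of $G$, a constant depending only on $G$. Third, for fixed $(\sigma, H)$, a map $F$ with $FV_{\setminus\sigma}\subseteq H$ is determined freely basis-vector by basis-vector: an arbitrary element of $H$ at each $v_i$ with $i\notin\sigma$, and an arbitrary element of $G$ at each $v_i$ with $i\in\sigma$. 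This gives exactly $\left|H\right|^{n-\left|\sigma\right|}\left|G\right|^{\left|\sigma\right|} = \left|G\right|^{n} D^{-(n-\left|\sigma\right|)}$ choices.

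The final step uses $D > 1$ to remove the residual dependence on $\left|\sigma\right|$: since the function $x \mapsto D^{x}$ is increasing, $\left|\sigma\right| < \ell(D)\delta n$ implies $D^{\left|\sigma\right|} \leq D^{\ell(D)\delta n}$, hence $D^{-(n-\left|\sigma\right|)} \leq D^{-n+\ell(D)\delta n}$. Multiplying the three factors and absorbing the combinatorial constant and the submodule count into a single $K_0 = K_0(G, \delta)$ yields the claimed bound. There is no real analytic obstacle here — the argument is purely a combinatorial overcount — and the one point to watch is simply that $K_0$ must simultaneously absorb the partial-binomial-sum constant (which requires $\delta$ to be small enough) and the maximum, over divisors $D$ of $\left|G\right|$, of the number of index-$D$ submodules of $G$; both are finite once $G$ is fixed.
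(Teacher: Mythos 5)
Your counting argument is in substance the same as the paper's intended proof: the paper states Lemma \ref{lem5x3} only as an analogue of \cite[Lemma 5.2]{Woo17}, and that proof is precisely your overcount by triples $(\sigma, H, F)$ with $H \leq G$ of index $D$ and $F V_{\setminus \sigma} \subseteq H$, with the three factors you identify (choices of $\sigma$; the number of submodules of $G$, a constant depending only on $G$; and $|H|^{n-|\sigma|}|G|^{|\sigma|} = |G|^n D^{-(n-|\sigma|)}$ choices of $F$). The one point where you diverge is the count of $\sigma$: summing $\binom{n}{j}$ over all $j \leq \lceil \ell(D)\delta n \rceil - 1$ and comparing with the top term requires $\ell(D)\delta$ bounded away from $1/2$, i.e.\ your extra hypothesis $\delta < 1/(2\ell(|G|))$, whereas Definition \ref{def4x2} only assumes $\delta < \ell(|G|)^{-1}$; when $\ell(D)\delta n$ is near or above $n/2$ the partial sum is no longer $O\bigl(\binom{n}{\lceil \ell(D)\delta n \rceil - 1}\bigr)$, so as written your argument proves the lemma only on a smaller range of $\delta$, with $K_0$ also depending on $\delta$. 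The cleaner route, which is what the cited proof does, is to enlarge $\sigma$ to a set of size exactly $\lceil \ell(D)\delta n \rceil - 1$ (enlarging $\sigma$ only shrinks $V_{\setminus \sigma}$, so $F V_{\setminus \sigma} \subseteq H$ persists, and $\lceil \ell(D)\delta n \rceil - 1 < n$ because $\delta < \ell(|G|)^{-1}$); then only the single binomial coefficient $\binom{n}{\lceil \ell(D)\delta n \rceil - 1}$ appears, no smallness of $\delta$ beyond the standing assumption is needed, and $K_0$ can be taken to be the number of $\OO$-submodules of $G$, depending only on $G$ as stated. Since $\delta$ is chosen sufficiently small in the application (Theorem \ref{thm5h}), your version would suffice there, but you should make this adjustment to obtain the lemma in its stated generality.
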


\begin{lemma} \label{lem5x4}
Let $\varepsilon, \delta, G$ be as in Lemma \ref{lem5x1}. Then there exists $K_0>0$ such that if $F \in \Hom_R(V, G)$ has depth $D>1$ and $[G : FV] < D$, then for all $\varepsilon$-balanced matrix $X \in \HH_n(R)$, $$
\PP(FX=0) \leq K_0e^{-\varepsilon (1- \ell(D) \delta) n}(\left| G \right|/D)^{-(1-\ell(D) \delta)n}.
$$
\end{lemma}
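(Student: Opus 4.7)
The plan is to follow the proof of \cite[Lemma 5.4]{Woo17}, which Lemma \ref{lem4x4} already adapted in the unramified case. The only step that requires genuine verification in the ramified setting is the per-column bound
\[
\PP(x_1 f_1 \equiv g \text{ in } G/H) \leq 1 - \varepsilon,
\]
where $H$ is an $\OO$-submodule of $G$ of index $D$, $f_1 \in G \setminus H$, $g \in G/H$ is arbitrary, and $x_1$ is a non-diagonal $\varepsilon$-balanced entry of the Hermitian matrix $X$.

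To prove this, I would write $x_1 = Y + \pi Z$ with independent $\varepsilon$-balanced $Y \in R_1$ and $Z \in R_2$. The annihilator $\text{Ann}_{G/H}(f_1) \subset R$ is a proper ideal (because $f_1 \notin H$), hence of the form $\pi^k R$ with $k \geq 1$. Consequently, the set of bad values of $x_1$ is either empty or a coset of $\pi^k R \subseteq \pi R$. Using the structural analysis of $R$ summarized in Remark \ref{rmk5b} and at the start of Section \ref{Sec5}, one verifies for both type I and type II extensions that an element $a + \pi b \in R$ lies in $\pi R$ if and only if $a \in p R_1$; hence the bad set constrains $Y$ to a single residue class modulo $p$. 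The $\varepsilon$-balancedness of $Y$ then yields $\PP(Y \equiv y_0 \,\,(\mathrm{mod}\,\, p)) \leq 1 - \varepsilon$ for every $y_0$, giving the per-column bound.

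Granted the per-column bound, the rest of the argument is a direct adaptation of \cite[Lemma 5.4]{Woo17}. The hypothesis $[G:FV] < D$ forces some $i^* \in \sigma$ with $F(v_{i^*}) \notin H$, where $\sigma$ with $|\sigma| < \ell(D)\delta n$ is the subset guaranteed by the depth. The entries $X_{i^* j}$ for $j \in \bar\sigma := [n] \setminus \sigma$ sit in pairwise disjoint upper-triangular positions (each unordered pair of indices corresponds to exactly one independent entry), so the per-column events are jointly independent; multiplying their probabilities yields the factor $(1-\varepsilon)^{|\bar\sigma|} \leq e^{-\varepsilon(1 - \ell(D)\delta)n}$. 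The remaining polynomial factor $(|G|/D)^{-(1-\ell(D)\delta)n}$ comes from applying Lemma \ref{lem5x1} to the restriction $F' := F|_{V_{\bar\sigma}} \colon V_{\bar\sigma} \to H$, which is surjective by construction and is a code of distance $\delta n$: otherwise one could enlarge $\sigma$ to witness depth at least $pD$, contradicting the maximality of $D$. The main obstacle is simply the structural analysis of $R$ needed to reduce the bad set to a condition on $Y \,(\mathrm{mod}\,p)$, which is where the type~I versus type~II distinction appears.
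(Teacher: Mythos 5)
Your proposal is correct and follows essentially the same route as the paper: the paper's proof likewise defers the combinatorial skeleton to \cite[Lemma 5.4]{Woo17} (as already adapted in Lemma \ref{lem4x4}) and isolates as the only new point the per-column bound $\PP(x_1f_1 \equiv g \text{ in } G/H) \leq 1-\varepsilon$, proved exactly as you do, by noting that $\mathrm{Ann}_{G/H}(f_1)=\pi^k R$ with $k\geq 1$ forces the bad set $\{x \in R_1 : x_1f_1 \equiv g \text{ for some } y \in R_2\}$ into a single residue class modulo $p$ and then invoking the $\varepsilon$-balancedness of the $R_1$-component. Your additional unpacking of Wood's argument (independence of the cross entries for the mod-$H$ conditions, and applying Lemma \ref{lem5x1} to the restriction $F|_{V_{\bar\sigma}}\colon V_{\bar\sigma}\to H$, which is a code by maximality of the depth) is accurate but is exactly the content the paper delegates to \cite{Woo17}.
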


\begin{proof}
The proof is same as Lemma \ref{lem4x4}. In the ramified case, one can write $x_1 = x+\pi y$ for $\varepsilon$-balanced $x \in R_1$ and $y \in R_2$. For an $\OO$-submodule $H$ in $G$ of index $D$, $f_1 \in G \setminus H$ and a non-diagonal entry $x_1$ of $X$, the elements of the set
$$
\left \{ x \in R_1 : x_1f_1 \equiv g \text{ in } G/H \text{ for some } y \in R_2 \right \}
$$
are contained in a single equivalence class modulo $p$. Since $x$ is $\varepsilon$-balanced, we conclude that $\PP(x_1f_1 \equiv g \text{ in } G/H) \leq 1 - \varepsilon$.
\end{proof}

The following theorem can be proved exactly as in Theorem \ref{thm4l}. (Replace the Lemma \ref{lem4x1}, \ref{lem4x3} and \ref{lem4x4} to the Lemma \ref{lem5x1}, \ref{lem5x3} and \ref{lem5x4}, respectively.)

\begin{theorem} \label{thm5h}
Let $0 < \varepsilon < 1$ and $G$ be given. Then for any sufficiently small $c > 0$, there is a $K_0=K_{\varepsilon, G, c}>0$ such that for every positive integer $n$ and an $\varepsilon$-balanced matrix $X_0 \in \HH_n(\OO)$, 
$$
\left| \EE(\# \Sur_{\OO}(\cok(X_0), G)) - p^{\sum_{i=1}^{r}\left ( (i-1)\lambda_i + \left \lfloor \frac{\lambda_i}{2}\right \rfloor \right )} \right| \leq K_0 e^{-cn}.
$$
In particular, the equation (\ref{eq2g}) holds for every sequence of $\varepsilon$-balanced matrices $(X_n)_{n \geq 1}$.
\end{theorem}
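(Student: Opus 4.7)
The plan is to mimic the proof of Theorem \ref{thm4l} step by step, with Lemmas \ref{lem5x1}, \ref{lem5x3}, and \ref{lem5x4} replacing Lemmas \ref{lem4x1}, \ref{lem4x3}, and \ref{lem4x4}. Writing $X \in \HH_n(R)$ for the reduction of $X_0$ modulo $\pi^{m'}$, the identity
$$
\EE(\# \Sur_{\OO}(\cok(X_0), G))
= \sum_{F \in \Sur_R(V, G)} \PP(FX = 0)
$$
from equation (\ref{eq5a}) reduces the problem to bounding
$$
\left| \sum_{F \in \Sur_R(V, G)} \PP(FX=0) - M_G \right|,
$$
where $M_G = p^{\sum_{i=1}^{r}\left((i-1)\lambda_i + \lfloor \lambda_i/2 \rfloor\right)}$. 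First I would rewrite $M_G$ as $\sum_{F \in \Hom_R(V,G)} M_G |G|^{-n}$ (since $|\Hom_R(V,G)| = |G|^n$), so the quantity to bound splits by the triangle inequality into three sums: (a) over surjective $F$ that are codes of distance $\delta n$, (b) over surjective $F$ that are not codes of distance $\delta n$, and (c) over non-surjective $F$.

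For piece (a), Lemma \ref{lem5x1} gives $|\PP(FX=0) - M_G|G|^{-n}| \leq K_0 e^{-cn}|G|^{-n}$, and there are at most $|G|^n$ such $F$, giving a total contribution of $K_0 e^{-cn}$. For piece (b), I fix a depth $D>1$ with $D \mid \#G$ and use Lemma \ref{lem5x3} to bound the number of $F$ of depth $D$ by $K_0 \binom{n}{\lceil \ell(D) \delta n \rceil - 1} |G|^n D^{-n+\ell(D)\delta n}$; then Lemma \ref{lem5x4} (applied using $[G:FV] < D$ which holds since $FV=G$ so the index is $1$) gives $\PP(FX=0) \leq K_0 e^{-\varepsilon(1-\ell(D)\delta)n}(|G|/D)^{-(1-\ell(D)\delta)n}$. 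Multiplying, the factors of $|G|^n$ and $D^{-n}$ cancel with $(|G|/D)^{-n}$, leaving
$$
K_0 \binom{n}{\lceil \ell(D)\delta n \rceil - 1} |G|^{\ell(D)\delta n} e^{-\varepsilon(1-\ell(D)\delta)n},
$$
which for $\delta$ sufficiently small (depending on $|G|$ and $\varepsilon$) is $O(e^{-cn})$; summing over the finite set of divisors $D>1$ of $\#G$ preserves this bound.

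For piece (c), the same application of Lemma \ref{lem5x3} to each proper submodule $H \lneq G$ (viewing a non-surjective $F \in \Hom_R(V,G)$ as a surjection onto some $H$ of depth $D = [G:H] > 1$) gives
$$
\sum_{F \in \Hom_R(V,G) \setminus \Sur_R(V,G)} M_G |G|^{-n}
\leq K_0 \binom{n}{\lceil \ell(|G|)\delta n \rceil - 1} 2^{-n + \ell(|G|)\delta n},
$$
which is again $O(e^{-cn})$ for $\delta$ small. Combining (a), (b), (c) yields the claimed exponential bound, and the $n \to \infty$ limit recovers the equation (\ref{eq2g}). I expect no genuine obstacle here since every lemma needed for the ramified case has already been set up in parallel with the unramified proof; the main bookkeeping point is simply to verify that the elementary algebraic manipulations in (b) (cancellation of $|G|^n$ and $D^{\ell(D)\delta n}$ factors) proceed identically, which they do because they depend only on $|G|$ and not on the ramification behavior.
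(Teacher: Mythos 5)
Your proposal takes essentially the same route as the paper, whose proof of Theorem \ref{thm5h} is literally the argument of Theorem \ref{thm4l} with Lemmas \ref{lem4x1}, \ref{lem4x3}, \ref{lem4x4} replaced by Lemmas \ref{lem5x1}, \ref{lem5x3}, \ref{lem5x4}, and your pieces (a)--(c) reproduce that argument (your (c) merges the paper's last two steps, which is fine since every non-surjective $F$ has depth $[G:FV]>1$, so Lemma \ref{lem5x3} applies). One bookkeeping point: in your piece (b) the summand is $\left| \PP(FX=0) - M_G \left| G \right|^{-n} \right|$ over surjective non-codes, so besides $\PP(FX=0)$ you must also absorb the $M_G \left| G \right|^{-n}$ contribution of those $F$; this follows immediately from the same Lemma \ref{lem5x3} count you already invoke (multiply the number of depth-$D$ maps by the constant $M_G \left| G \right|^{-n}$, exactly as in step (c) of the proof of Theorem \ref{thm4l}).
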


Combining the above theorem with Theorem \ref{thm2c} and \ref{thm3c}, we obtain the universality result for the distribution of the cokernels of random $p$-adic ramified Hermitian matrices. 

\begin{theorem} \label{thm5i}
For every sequence of $\varepsilon$-balanced matrices $(X_n)_{n \geq 1}$ ($X_n \in \HH_n(\OO)$), the limiting distribution of $\cok(X_n)$ is given by the equation (\ref{eq2d}).
\end{theorem}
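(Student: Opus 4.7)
The plan is to deduce Theorem \ref{thm5i} by combining the three ingredients already established in the paper, mirroring the argument given for Theorem \ref{thm4m} in the unramified case. Concretely, let $A_n$ denote the cokernel of a Haar random matrix in $\HH_n(\OO)$ and $B_n = \cok(X_n)$ for the given $\varepsilon$-balanced sequence. I want to show that $A_n$ and $B_n$ have the same limiting distribution, and then invoke Theorem \ref{thm2c}(2) to identify that common limit with the right-hand side of (\ref{eq2d}).

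First, I would fix a finite $\OO$-module $\Gamma$ and choose a positive integer $a$ large enough that $\pi^{a-1} \Gamma = 0$. For any finitely generated $\OO$-module $H$, the structure theorem for modules over the PID $\OO$ gives that $H \otimes \OO/\pi^a \OO \cong \Gamma$ if and only if $H \cong \Gamma$. Thus it suffices to verify the hypotheses of Theorem \ref{thm3c} for the sequences $(A_n)$ and $(B_n)$ and the exponent $a$. The limit $\lim_{n\to\infty} \PP(A_n \otimes \OO/\pi^a \OO \cong G)$ exists for every $G \in \mathcal{M}_a$ by Theorem \ref{thm2c}(2).

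Next, I would verify the moment hypotheses. For every finite $\OO$-module $G$ of type $\lambda$, Theorem \ref{thm2g}(2) gives
\[
\lim_{n \to \infty} \EE(\# \Sur_{\OO}(A_n, G)) = p^{\sum_{i=1}^{r}\left( (i-1)\lambda_i + \left\lfloor \frac{\lambda_i}{2}\right\rfloor \right)},
\]
and Theorem \ref{thm5h} gives exactly the same limit for $B_n$, with exponentially small error term. The remark at the end of Section \ref{Sec2} (based on the identity $\sum_i (2i-1)\lambda_i = \sum_j \lambda_j'^2$ together with the parity comparison $(i-1)\lambda_i + \lfloor \lambda_i/2 \rfloor \leq (2i-1)\lambda_i$) shows that these limiting moments are bounded above by $m(G) = |\kappa|^{\sum_j \lambda_j'^2/2}$, so the $O(m(G))$ hypothesis of Theorem \ref{thm3c} is satisfied.

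Applying Theorem \ref{thm3c} then yields
\[
\lim_{n \to \infty} \PP(B_n \otimes \OO/\pi^a \OO \cong \Gamma)
= \lim_{n \to \infty} \PP(A_n \otimes \OO/\pi^a \OO \cong \Gamma),
\]
and by the choice of $a$ this equality becomes $\lim \PP(\cok(X_n) \cong \Gamma) = \lim \PP(A_n \cong \Gamma)$. Invoking Theorem \ref{thm2c}(2) for the right-hand side gives the formula (\ref{eq2d}). The combination is essentially mechanical at this stage; the substantive work has already been performed in proving Theorem \ref{thm5h}, so I do not anticipate any obstacle in this final assembly beyond checking that the moment growth bound $O(m(G))$ indeed holds in the ramified case, which is immediate from the comparison above.
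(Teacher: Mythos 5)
Your proposal is correct and follows the paper's own proof of Theorem \ref{thm5i} essentially verbatim: choose $a$ with $\pi^{a-1}\Gamma = 0$, compare $B_n = \cok(X_n)$ with the Haar case $A_n$ via Theorem \ref{thm3c} using the matching limiting moments from Theorems \ref{thm2g}(2) and \ref{thm5h}, and identify the common limit by Theorem \ref{thm2c}(2). One small slip in your parenthetical justification of the $O(m(G))$ hypothesis: the comparison $(i-1)\lambda_i + \lfloor \lambda_i/2\rfloor \le (2i-1)\lambda_i$ together with $\sum_i (2i-1)\lambda_i = \sum_j \lambda_j'^2$ only bounds the ramified moment by $p^{\sum_j \lambda_j'^2} = m(G)^2$ (since $|\kappa| = p$ here, so $m(G) = p^{\sum_j \lambda_j'^2/2}$), which is too weak for Theorem \ref{thm3c}; what you need is the sharper inequality $(i-1)\lambda_i + \lfloor \lambda_i/2\rfloor \le \tfrac{1}{2}(2i-1)\lambda_i$, immediate from $\lfloor \lambda_i/2\rfloor \le \lambda_i/2$, which recovers exactly the bound asserted in the remark at the end of Section \ref{Sec2}.
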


\begin{proof}
Choose a positive integer $a$ such that $\pi^{a-1} \Gamma = 0$. Then for any finitely generated $\OO$-module $H$, we have $H \otimes \OO/\pi^{a} \OO \cong \Gamma$ if and only if $H \cong \Gamma$. Let $A_n$ be the cokernel of a Haar random matrix in $\HH_n(\OO)$ and $B_n = \cok(X_n)$. Then Theorem \ref{thm2c}, \ref{thm3c} and \ref{thm5h} conclude the proof.
\end{proof}

\section*{Acknowledgments}

The author is supported by a KIAS Individual Grant (SP079601) via the Center for Mathematical Challenges at Korea Institute for Advanced Study. We thank Jacob Tsimerman and Myungjun Yu for their helpful comments.

{\small  }


\begin{thebibliography}{99}

\bibitem{BKLPR15}
M. Bhargava, D. M. Kane, H. W. Lenstra, B. Poonen and E. Rains, Modeling the distribution of ranks, Selmer groups, and Shafarevich-Tate groups of elliptic curves, Camb. J. Math. 3 (2015), no. 3, 275--321.

\bibitem{CH21}
G. Cheong and Y. Huang, Cohen-Lenstra distributions via random matrices over complete discrete valuation rings with finite residue fields, Illinois J. Math. 65 (2021), 385--415.

\bibitem{Cho16}
S. Cho, Group schemes and local densities of ramified hermitian lattices in residue characteristic 2 Part I, Algebra Number Theory 10 (2016), no. 3, 451--532.

\bibitem{CKLPW15}
J. Clancy, N. Kaplan, T. Leake, S. Payne and M. M. Wood, On a Cohen-Lenstra heuristic for Jacobians of random graphs, J. Algebraic Combin. 42 (2015), no. 3, 701--723.

\bibitem{CL84}
H. Cohen and H. W. Lenstra Jr., Heuristics on class groups of number fields, Number Theory, Noordwijkerhout 1983, Lecture Notes in Math. 1068, Springer, Berlin, 1984, 33--62.

\bibitem{Del01}
C. Delaunay, Heuristics on Tate-Shafarevitch groups of elliptic curves defined over $\Q$, Exp. Math. 10 (2001), no. 2, 191--196.

\bibitem{Del07}
C. Delaunay, Heuristics on class groups and on Tate-Shafarevich groups: the magic of the Cohen-Lenstra heuristics, in Ranks of Elliptic Curves and Random Matrix Theory, London Math. Soc. Lecture Note Ser. 341, Cambridge Univ. Press, Cambridge, 2007, 323--340. 

\bibitem{DJ14}
C. Delaunay and F. Jouhet, $p^{\ell}$-torsion points in finite abelian groups and combinatorial identities, Adv. Math. 258 (2014), 13--45.

\bibitem{FW89}
E. Friedman and L. C. Washington, On the distribution of divisor class groups of curves over a finite field, in Théorie des Nombres (Quebec, PQ, 1987), de Gruyter, Berlin, 1989, 227--239.

\bibitem{GLSV14}
R. Gow, M. Lavrauw, J. Sheekey and F. Vanhove, Constant rank-distance sets of hermitian matrices and partial spreads in hermitian polar spaces, Electron. J. Combin. 21 (2014), P1.26.

\bibitem{HL00}
T. Honold and I. Landjev, Linear codes over finite chain rings, Electron. J. Combin. 7 (2000), R11.

\bibitem{Jac62}
R. Jacobowitz, Hermitian forms over local fields, Amer. J. Math. 84 (1962), no. 3, 441--465.

\bibitem{Lee22}
J. Lee, Joint distribution of the cokernels of random $p$-adic matrices, Forum Math. 35 (2023), no. 4, 1005--1020. 

\bibitem{Mac69}
J. MacWilliams, Orthogonal matrices over finite fields, Amer. Math. Monthly 76 (1969), 152--164.

\bibitem{Neu99}
J. Neukirch, Algebraic number theory, Grundlehren der Mathematischen Wissenschaften 322, Springer, Berlin, 1999.

\bibitem{NW22}
H. H. Nguyen and M. M. Wood, Random integral matrices: universality of surjectivity and the cokernel, Invent. Math. 228 (2022), 1--76.

\bibitem{Woo17}
M. M. Wood, The distribution of sandpile groups of random graphs, J. Amer. Math. Soc. 30 (2017), no. 4, 915--958.

\bibitem{Woo19}
M. M. Wood, Random integral matrices and the Cohen-Lenstra heuristics, Amer. J. Math. 141 (2019), no. 2, 383--398.

\bibitem{Woo22}
M. M. Wood, Probability theory for random groups arising in number theory, arXiv:2301.09687, to appear in Proceedings of the International Congress of Mathematicians (2022).

\bibitem{Yu12}
C. F. Yu, On Hermitian forms over dyadic non-maximal local orders, Pure Appl. Math. Q. 8 (2012), no. 4, 1117--1146.

\end{thebibliography}
\end{document}